\RequirePackage{luatex85} 
\documentclass[11pt]{amsart}
\usepackage{amssymb}
\usepackage{latexsym}
\usepackage{graphicx}
\usepackage{subfigure}
\usepackage{tikz}
\usepackage[linktocpage=true]{hyperref}
\usepackage{ulem} 
\usepackage[left=3.8cm, right=3.8cm, top=3cm, bottom=3cm]{geometry}
\usepackage{color}
\usepackage{here}
\usepackage[backgroundcolor = lightgray,
            linecolor = lightgray,
            textsize = tiny]{todonotes}
\usepackage{stackengine}
\usepackage{mathrsfs}
\usepackage[all]{xy}
\usepackage{scrextend}
\usepackage{enumitem}
\usepackage{comment}

\hypersetup{ colorlinks   = true, 
urlcolor  = teal, 
linkcolor    = magenta, 
citecolor   = cyan 
}

\def\a{{\alpha}}
\def\e{{\varepsilon}}
\def\beq{\begin{equation}}
\def\eeq{\end{equation}}

\def\a{\alpha}

\def\e{\epsilon}

\newcommand{\cC}{{\mathcal C}}
\newcommand{\cD}{{\mathcal D}}

\newcommand{\cB}{{\mathcal B}}

\newcommand{\cW}{{\mathcal W}}

\newcommand{\cPH}{{\mathcal{PH}}}

\newcommand{\cU}{{\mathcal U}}

\newcommand{\cO}{{\mathcal O}}

\newcommand{\wt}{\widetilde}

\newcommand{\R}{{\mathbb R}}

\newcommand{\N}{{\mathbb N}}

\newtheorem{thm}{Theorem}[section]
\newtheorem{rem}[thm]{Remark}

\newtheorem{lem}[thm]{Lemma}
\newtheorem{conj}[thm]{Conjecture}
\newtheorem{prop}[thm]{Proposition}
\newtheorem{cor}[thm]{Corollary}
\newtheorem{claim}[thm]{Claim}

\theoremstyle{definition}
\newtheorem{df}[thm]{Definition}

\theoremstyle{plain}
\newtheorem*{main thm}{Main Theorem}
\newtheorem*{main thm bis}{Main Theorem (alternate version)}

\newtheorem{theoalph}{Theorem}

\newtheorem{question}{Question}
\providecommand{\norm}[1]{\lVert#1\rVert}

\title{Robust transitivity of geodesic flows from metrics with conjugate points}
\author{Ygor de Jesus, Luis Pedro Piñeyrúa, Sergio Romaña}

\newcommand{\Addresses}{{
  \bigskip
  \footnotesize

\noindent \textbf{Y. de Jesus}\\
 Department of Mathematics\\
University of Luxembourg\\
Esch-sur-Alzette, Luxembourg\\
\textit{E-mail}: \texttt{ygor.dejesus@uni.lu}

\noindent\textbf{L. P. Piñeyrúa}\\
IMERL, Facultad de Ingeniería\\
Universidad de la República\\
Montevideo, Uruguay\\
\textit{E-mail}: \texttt{lpineyrua@fing.edu.uy}

\noindent\textbf{S. Romaña}\\
School of  Mathematics (Zhuhai)\\
Sun Yat-sen University\\
Zhuhai, 519082, PR China\\
\textit{E-mail}: \texttt{sergio@mail.sysu.edu.cn}
}}

\begin{document}
\numberwithin{equation}{section}

\maketitle

\begin{abstract}
    In this article, we prove a general criterion for obtaining robust transitivity of partially hyperbolic geodesic flows. As a consequence, we exhibit the first example of a $C^2$-open set of Riemannian metrics with conjugate points and transitive geodesic flow. In particular, such a set does not intersect the set of Riemannian metrics with Anosov geodesic flows. 
\end{abstract}

\tableofcontents

\section{Introduction}
\subsection{Main results}
Given $(M,g)$ a closed Riemannian manifold of arbitrary dimension, we define the geodesic flow associated to it by the following $\R$-action on the unit tangent bundle $T^1M=\{v\in TM: g(v,v)=1\}$:
\begin{align*}
    \varphi^t: T^1M& \rightarrow T^1M\\
    v&\mapsto \gamma_v'(v).
\end{align*}
This dynamical system establishes deep connections between dynamical properties and the geometry of the underlying manifold. This interplay goes in both directions, where geometric properties imply dynamical properties, and vice versa. 

For example, a classical result states that if a metric $g$ has negative sectional curvature, then its geodesic flow is uniformly hyperbolic (an Anosov flow) \cite{An}. This means that the tangent bundle along orbits splits into complementary invariant directions in which the dynamics is simplified: distances are exponentially contracted in one direction and exponentially expanded in the other (see Subsection \ref{sAnosov} for precise definitions).
On the other hand, Klingenberg \cite{Kl} proved that metrics with Anosov geodesic flow on compact manifolds cannot present geodesics with conjugate points (a purely geometrical feature). This result was later extended to complete metrics on not necessarily compact manifolds by Ma\~n\'e \cite{Mane} and for general non-compact surfaces in \cite{MR} by Melo and Roma\~na. 

We remark that it is not true that metrics with Anosov geodesic flows must present only non-positive curvature; explicit examples of Riemannian metrics with curvature with both signs were constructed in \cite{DonnayPugh2003} and \cite{GU} (see also \cite{Eb1}).

Given these rich interplays between the Anosov property of geodesic flow and the geometry of the manifold, it is natural to pose the following general question:

\begin{question}
    Which geometric properties can or cannot coexist with which dynamical property of the geodesic flow?
\end{question}

More specifically:

\begin{question}
    Which ``interesting'' dynamical properties of the geodesic flow can occur outside the class of Anosov geodesic flows? 
\end{question}

This second question became a focal point in several works published in the early 2000s. For instance, every metric on the sphere $\mathbb{S}^2$ must admit conjugate points; hence, no geodesic flow on $\mathbb{S}^2$ can be Anosov (similar result for $\mathbb{T}^2$). On the other hand, significant progress was made in constructing metrics on $\mathbb{S}^2$ whose geodesic flows exhibit rich dynamics---such as positive topological entropy and nearly dense geodesics---as shown in \cite{BW}, \cite{ConP}, and \cite{KW}. Geodesic flows with complex dynamics on spheres were already known from the work of Donnay \cite{Don1,Don2}, whose arguments relied on the presence of negative curvature. In contrast, the other constructions mentioned above were achieved even within the space of positively curved metrics, demonstrating that complicated dynamics for the geodesic flow do not require the presence of negative sectional curvature.

Another way to relax the Anosov condition is to consider \textit{partially hyperbolic systems} (see Subsection \ref{Preliminares Partial Hyperbolicity}), which still exhibit contracting and expanding directions, but these are no longer complementary. In addition, they possess \textit{center directions} along the orbits where the dynamics may be neutral (neither contracting nor expanding), making the analysis considerably more delicate.

In the setting of geodesic flows, the first examples of partially hyperbolic geodesic flows were constructed by Carneiro and Pujals in \cite{CP}. Recently, we extended their results in a subsequent work \cite{dJPR}, where we produced metrics whose geodesic flows display rich dynamical properties, including ergodicity with respect to the Liouville measure (and hence topological transitivity), uniqueness of the measure of maximal entropy, and expansivity. All these properties occur for metrics that lie outside the class of Anosov geodesic flows, but on the boundary of metrics without conjugate points (see \cite{Rug0}).

In the present work, we further extend the properties of such examples by establishing their robust transitivity, which leads to our first main result:

\begin{theoalph}
\label{THM A}
    There exists a manifold $M$ that admits a $C^2$-open set of Riemannian metrics with conjugate points and topologically transitive geodesic flow.
\end{theoalph}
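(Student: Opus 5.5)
The plan is to deduce the statement from a general criterion for robust transitivity of partially hyperbolic geodesic flows, and then to apply that criterion to the Carneiro--Pujals type metrics from \cite{CP,dJPR}. The manifold $M$ will be a closed hyperbolic manifold of dimension at least $3$, or a product-type manifold as in \cite{CP}. The metric $g_0$ will be obtained from a hyperbolic metric by a local perturbation near a closed geodesic. This perturbation creates positive curvature and conjugate points, yet keeps the geodesic flow partially hyperbolic with a splitting $E^s\oplus E^c\oplus E^u$ (where $E^c$ contains the flow direction). Partial hyperbolicity is a $C^1$-open condition on the flow. Since the flow depends $C^1$-continuously on the metric in the $C^2$ topology, the splitting persists for all $g$ in a $C^2$-neighbourhood $\mathcal U$ of $g_0$. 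Having conjugate points is also $C^2$-open: a pair of conjugate points along a geodesic gives a Jacobi field vanishing twice, and if the conjugate points are chosen nondegenerately (index form changes sign transversally), this persists under $C^2$-small perturbations.

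For transitivity I would use the Brin/Bonatti--Díaz style criterion. Its first ingredient is that the strong stable and strong unstable foliations $\mathcal W^s,\mathcal W^u$ of $\varphi_g^t$ are \emph{minimal}, robustly in $g$. Its second ingredient is that the flow is volume preserving, which holds automatically because it preserves the Liouville measure. These two facts together give transitivity: for open sets $U,V$, volume preservation and minimality of $\mathcal W^u$ yield a forward iterate of $U$ containing a large unstable disc. That disc meets $V$ by density of every unstable leaf, and uniform density gives this at a bounded time. To get minimality robustly, I would compare with the hyperbolic metric $g_{\mathrm{hyp}}$. Away from the small perturbation region, the flow of $g$ is $C^1$-close to an Anosov geodesic flow whose strong foliations are minimal (by transitivity of the horocycle-type foliations). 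I would then use a quantitative version of this: there is $R>0$ such that every strong unstable disc of radius $R$ for $g\in\mathcal U$ is $\epsilon$-dense in $T^1M$. Such discs are uniformly large thanks to the uniform expansion along $E^u$, and they cross the unperturbed region. There the dynamics is close to hyperbolic, and strong unstable leaves of the unperturbed flow are dense. Continuity of the foliations with respect to $g$ then transfers $\epsilon$-density, and iterating (expansion plus $\epsilon$-density) upgrades it to density of every leaf.

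The main obstacle is this minimality step. The perturbation region is where the metric differs essentially from $g_0$'s hyperbolic background, so one must control how much the region shadows unstable leaves. I would try to choose the perturbation supported in a tube of small volume around a single closed geodesic, so that most of a long unstable disc spends its time in the hyperbolic region. Using cone-field estimates from \cite{CP,dJPR} (the invariant cones survive $C^2$-perturbation), the unstable leaves stay $C^1$-close to those of the hyperbolic flow outside the tube. Finally I would verify that the conjugate points produced in the tube persist, which concludes Theorem~\ref{THM A} for the open set $\mathcal U$.
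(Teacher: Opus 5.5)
Your proposal has a genuine gap: the step that carries all the weight, robust minimality of the strong unstable foliation for every $g$ near $g_0$, is not established, and the argument you sketch for it does not work.

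\textbf{Comparison with the hyperbolic metric.} First, $g_0$ cannot be $C^2$-close to $g_{\mathrm{hyp}}$. Anosov metrics form a $C^2$-open set and have no conjugate points (Klingenberg), so a metric with conjugate points is at definite $C^2$ distance from $g_{\mathrm{hyp}}$. Hence the two geodesic flows are not $C^1$-close, and ``continuity of the foliations with respect to $g$'' gives no relation between $\mathcal W^{uu}_g$ and the horocyclic leaves of $g_{\mathrm{hyp}}$. Even outside the tube, $E^{uu}_g$ at a point depends on the whole backward orbit, which passes through the perturbation region. Being tangent to a common cone field does not keep two leaves close over the length $R$ you need for $\epsilon$-density.

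\textbf{Upgrading $\epsilon$-density.} Suppose you did know that unstable discs of radius $R$ are $\epsilon$-dense for one fixed $\epsilon$. Forward iteration only produces more large unstable discs, which are again merely $\epsilon$-dense. This never reaches an open set $V$ of diameter less than $\epsilon$. To handle small $V$ you need a mechanism that enlarges $V$ in backward time in the complementary center-stable directions. That is exactly what the SH-saddle property supplies.

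\textbf{What the paper does instead.} The paper never proves minimality. It uses the SH-saddle property of index $(d,d)$, obtained by a keepaway argument: strong leaves contain points whose orbits avoid the tube. This property makes $(n-1)$-discs tangent to $\mathcal C^{uu}\times\mathcal C^u$ and $\mathcal C^{ss}\times\mathcal C^s$ grow to uniform size. Mixing is then transferred from the unperturbed flow $\varphi^t$ in four steps:
\begin{enumerate}
\item Ruggiero's semiconjugacy $h_\psi$, which is $C^0$-close to the identity and requires the base metric to be expansive with no conjugate points.
\item The Leguil--Zhang light-map theorem, which shows that the projections of $h_\psi(\Delta^{u}_n)$ and $h_\psi(\Delta^{s}_n)$ contain open sets of uniform size in the unstable and stable sets of $\varphi^t$.
\item Transitivity of $\varphi^t$ coming from the local product structure, used to mix these open sets.
\item Removal of $h_\psi$.
\end{enumerate}

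\textbf{Conjugate points.} Your claim that the Carneiro--Pujals-type perturbation itself creates conjugate points is unsupported. Positive curvature on some planes does not produce conjugate points, and this question was left open in \cite{CP}. The \cite{dJPR} metric the paper uses has no conjugate points, and it needs this for Ruggiero's theory. The paper obtains conjugate points indirectly. The base metric $\tilde g$ is non-Anosov, and by \cite{Rug0} the $C^2$-interior of the metrics without conjugate points consists of Anosov metrics. So the robustly transitive neighbourhood $\cU$ contains some $\hat g$ with conjugate points, and since having conjugate points is $C^2$-open, a smaller open set works.

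\textbf{Base manifold.} A real hyperbolic base cannot work. Constant curvature gives no gap between strong and weak rates inside $E^u$, and the \cite{CP,dJPR} mechanism needs that gap. This is why the paper starts from complex hyperbolic (K\"ahler, holomorphic curvature $-1$) or quaternionic K\"ahler locally symmetric manifolds. Those are also not ``product-type'' manifolds.
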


To the best of our knowledge, this is the first result establishing the existence of robustly transitive geodesic flows outside the class of Anosov metrics. This partially addresses the questions posed above and also provides a complete answer to a question left open by Carneiro and Pujals \cite[Question 1]{CP} regarding the existence of transitive partially hyperbolic geodesic flows for metrics that admit conjugate points. 

The study of robustly transitive dynamical systems beyond uniform hyperbolicity dates back to the 70s with the work of Mañé \cite{Mane2}, and was further developed by Bonatti and Díaz \cite{BD}. A common feature of their examples is that they are partially hyperbolic systems. 
Later, Bonatti, Díaz and Pujals characterized robustly transitive diffeomorphisms, proving that every $C^1$-robustly transitive diffeomorphism must admit a dominated splitting---a necessary condition for partial hyperbolicity. For flows, a similar result for the Poincaré map was obtained by Bonatti, Gourmelon, and Vivier \cite{BGV}.

In our setting, we say that a Riemannian metric $g$ is robustly transitive if it admits a $C^2$-neighbourhood $\mathcal{U}$ such that for every $\tilde{g}\in\mathcal{U}$, the geodesic flow of $\tilde{g}$ is topologically transitive. In light of the aforementioned results, it is natural to conjecture that an analogous characterization should hold in the context of geodesic flows, although the perturbation techniques required would be of a fundamentally different nature:

\begin{conj}
    Let $g$ be a robustly transitive Riemannian metric. Then, its geodesic flow must admit a dominated splitting.
\end{conj}
More generally,
\begin{conj}
    Let $X$ be a robustly transitive contact flow (inside the class of contact flows); then it must admit a dominated splitting.
\end{conj}

Our result provides the first evidence supporting the above conjectures. Moreover, by a result of Contreras \cite{Cont}, a dominated splitting for the geodesic flow (more generally contact flows) already implies partial hyperbolicity.

The proof of Theorem~\ref{THM A} combines three main ingredients. First, the construction in \cite{dJPR} furnishes the required examples and several of their essential properties. Second, we introduce a key property in the context of flows, called the \textit{SH-saddle} property, where ``\textit{SH}'' stands for ``\textit{Some Hyperbolicity}''. This notion was originally introduced by Pujals and Sambarino \cite{PS} for partially hyperbolic diffeomorphisms as a criterion to obtain $C^1$-robust minimality of the stable foliation. It was later extended by the second author \cite{Pi2} to a broader class of partially hyperbolic systems---including symplectic diffeomorphisms---yielding new examples of $C^1$-robustly transitive, non-Anosov partially hyperbolic diffeomorphisms on $\mathbb{T}^n$.

The \textit{SH-saddle} property requires the existence of a uniform constant $L>0$ such that every strong stable (resp. strong unstable) leaf of length at least $L$ contains a point whose backward (resp. forward) iterates exhibit uniform expansion in some center direction (see Section~\ref{sectionshproperty} for the precise definition). 

The third ingredient is a general criterion for robust transitivity of partially hyperbolic Riemannian metrics, which we prove in this work. Let us first fix some notation: for a partially hyperbolic flow $\varphi^t: M\rightarrow M$, we are going to denote its $D\varphi^t$-invariant splitting by $TM=E^{ss}\oplus E^c\oplus \langle X\rangle\oplus E^{uu}$, where $E^c$ is the center bundle, $E^{ss}$ and $E^{uu}$ are, respectively, the strong-stable and strong-unstable bundles. Under this notation, we present the following result:
\begin{theoalph}
\label{criterion of robust transitivity}
    Let $g_1$ be a $C^2$ Riemannian metric on a compact differentiable manifold $M$ of dimension $n$ with no conjugate points and let $\varphi^t:T^1M\to T^1M$ be its geodesic flow. Suppose that $\varphi^t$ is expansive and partially hyperbolic with the SH-Saddle property of index $(d,d)$, where $d=n-1-\mathrm{dim} E^{ss}=n-1-\mathrm{dim}E^{uu}$. Then there is $\cU$ a $C^2$-neighborhood of $g_1$ such that if $g\in \cU$, then the geodesic flow of $g$ is topologically transitive.
\end{theoalph}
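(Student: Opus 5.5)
The plan follows the Pujals--Sambarino strategy: show that for every metric $g$ close to $g_1$ the strong stable and strong unstable foliations of the geodesic flow are minimal, and deduce transitivity from this.

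\textbf{Step 1: persistence of the structure.} Partial hyperbolicity of $\varphi^t_{g_1}$ is $C^1$-open among flows. The geodesic flow depends $C^1$-continuously on the metric in the $C^2$ topology. Hence for $g$ in a small $C^2$-neighborhood $\cU$ of $g_1$ the flow $\varphi^t_g$ is partially hyperbolic, with splitting $E^{ss}_g\oplus E^c_g\oplus\langle X_g\rangle\oplus E^{uu}_g$ of the same dimensions. The strong foliations $\cW^{ss}_g,\cW^{uu}_g$ vary continuously with $g$, and leaves of length $L$ are uniformly close.

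Next we show that the SH-saddle property of index $(d,d)$ persists, with constants $L,\lambda$ slightly worse. The argument follows \cite{PS} and \cite{Pi2}. The SH property is defined by a finite-time, uniform condition: a point in each strong leaf of length $L$ whose backward (resp. forward) orbit expands a cone of dimension $d$ in the center. One shows it is robust by an invariant-cone argument in a neighborhood, using that a hyperbolic-like block in $E^c$ along a compact set survives perturbation.

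Also, because $g_1$ has no conjugate points, the center direction is the Green-bundle region. For the perturbed metric, the symplectic (contact) structure forces $\dim E^c$ to be even and splits $E^c$ into Lagrangian-type halves of dimension $d$. This is why index $(d,d)$ is natural here.

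\textbf{Step 2: minimality of $\cW^{uu}_g$ and $\cW^{ss}_g$.} Fix $g\in\cU$, and suppose some minimal set $\Lambda$ of the strong unstable foliation (closed, $\cW^{uu}_g$-saturated) is proper. The SH property lets us pick a point in an unstable disc of size $L$ inside $\Lambda$ whose forward iterates expand the center uniformly. Its iterates then contain $cu$-discs tangent to $E^{uu}\oplus E^{c,+}$ of a uniform radius $r$. Adding the flow direction, $\Lambda$ therefore contains discs of dimension $\dim E^{uu}+d+1$ and uniform size.

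By the dimension count, these discs are transverse to the weak stable foliation $\cW^{s}_g$ of dimension $\dim E^{ss}+d+1$. Ideally we would use transversality to the center-stable plaques. Here the index-$(d,d)$ hypothesis must be combined with the symmetric SH property for stable leaves.

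The point is to deduce that $\Lambda$ is open. The symmetric argument for $\cW^{ss}_g$ gives discs in the complementary directions. Then a local product structure shows that the saturation contains an open set of uniform size around each point. Openness together with closedness gives $\Lambda=T^1M$.

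\textbf{Step 3: from minimality to transitivity.} Let $U,V$ be open sets. Since $\cW^{uu}_g$ is minimal with a uniform $L$, every unstable disc of length $L'$ meets $V$. Take an unstable disc in $U$; its forward iterates grow to length $L'$, so $\varphi^t_g(U)\cap V\neq\emptyset$. Minimality of $\cW^{ss}$ is used to ensure uniformity.

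\textbf{Main obstacle.} I expect Step 2, specifically getting genuine openness, to be the main difficulty. Center expansion from SH yields discs in $E^{c,+}$ only. To fill an open set we need the expanding and contracting center halves to combine. This is where expansivity of $g_1$ and the absence of conjugate points should enter. For $g_1$, expansivity implies the weak stable and weak unstable sets intersect transversally, giving a topological local product structure: the Green bundles $E^s_{g_1}$ and $E^u_{g_1}$ are transverse after quotienting. I would try to transfer this to $g$ through a stability/semiconjugacy argument. Alternatively, I would use $cu$-discs of size $r$ for $g$ and $cs$-discs for $g$, whose dimensions sum to $2n-1=\dim T^1M$ plus one, with uniform angle, as the $(d,d)$ index makes them complementary. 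Then the $cu$-discs inside $\Lambda$ and the $cs$-discs inside the corresponding stable minimal set intersect robustly. By the intersection argument of \cite{Pi2}, this forces $\Lambda$ to have nonempty interior and hence to be everything.
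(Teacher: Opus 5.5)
\textbf{There is a genuine gap in Step~2, and the problem is not a missing detail: the intermediate goal itself can fail under the hypotheses of the theorem.}

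Nothing in the hypotheses gives minimality of $\cW^{uu}_g$ or $\cW^{ss}_g$, even for $g=g_1$. Here is a counterexample. Let $g_1$ be a compact complex hyperbolic metric on a manifold containing a closed (possibly immersed) totally geodesic complex curve $\Sigma$; such arithmetic quotients exist.
\begin{itemize}
\item This $g_1$ is Anosov. So it has no conjugate points, is expansive, and is partially hyperbolic with $E^c=E^{ws}\oplus E^{wu}$.
\item Every point is an SH point for a cone around $E^{wu}$, of index $d=\dim E^{wu}=n-2=n-1-\dim E^{uu}$.
\item Yet the strong unstable leaf of a vector $v$ tangent to $\Sigma$ is the horocycle of $\Sigma$, since the rate-$2$ direction is $Jv$. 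Hence the image of $T^1\Sigma$ is a proper closed $\cW^{uu}$-saturated set.
\end{itemize}

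The example also shows where your inference breaks. An SH point in a $\cW^{uu}$-minimal set $\Lambda$ produces center discs that expand \emph{transversely} to $\Lambda$, not inside it. $\Lambda$ is saturated only by strong unstable leaves, so nothing forces it to contain $n$-dimensional $cu$-discs. In Pujals--Sambarino, minimality of the unperturbed foliation is a \emph{hypothesis}, and the discs are grown inside the open set, not inside $\Lambda$.

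Your fallback, that uniform-size $cu$- and $cs$-discs intersect robustly, has no mechanism behind it either. Two such discs in $T^1M$ need not meet. Moreover, the perturbed flow may have conjugate points and need not be expansive or dynamically coherent, so there are no center-stable plaques or local product structure for it to invoke.

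\textbf{The missing idea is to do the mixing for $\varphi^t$, where transitivity and local product structure hold, and transport it to $\psi^t$.} You mention this only as something you would try; it is the core of the paper's proof. The steps are as follows.
\begin{enumerate}
\item Ruggiero's topological stability (Theorem~\ref{TeoRugtopstab}) gives a semiconjugacy $h_\psi\circ\psi^t=\varphi^{r_\psi(\cdot,t)}\circ h_\psi$. It satisfies $d_{C^0}(h_\psi,\mathrm{Id})<\frac18\min\{\rho,\varepsilon_0\}$, and its fibers have diameter $<\rho/2$.
\item Inside $f^{n_1}(U_2)$ the paper takes the $(n-1)$-dimensional disc $\Delta^u$. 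It is made of short strong unstable plaques through a $d$-dimensional center disc tangent to the SH cone at an SH point. Similarly, $\Delta^s$ is built inside $f^{-n_1}(U_1)$.
\item By SH, the iterates $f^{n}(\Delta^u)$ and $f^{-n}(\Delta^s)$ contain discs of uniform size $2\delta_1$.
\item These discs are transverse to the local center-stable (resp.\ center-unstable) sets of $\varphi^t$. This makes $\Pi^u\circ p\circ h_\psi$ and $\Pi^s\circ p\circ h_\psi$ $\rho$-light on them.
\item The Leguil--Zhang stable-value theorem (Corollary~\ref{corcontieneabierto}) then yields open discs of diameter $\ge\varepsilon_0$ in $W^u_\varphi$- and $W^s_\varphi$-leaves.
\item Lemma~\ref{mix open in right angle} mixes these discs under $\varphi^t$. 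Since $h_\psi$ moves points by less than $\varepsilon_0/8$, Lemma~\ref{remove h} removes $h_\psi$.
\end{enumerate}

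This is exactly where expansivity and the absence of conjugate points enter, and where the index $(d,d)$ is used: it makes $\dim\Delta^{u}=\dim\Delta^s=n-1=\dim W^{u}_\varphi$.

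Your Step~3 is fine: minimality of $\cW^{uu}_g$ alone would imply transitivity. But that minimality cannot be reached from the hypotheses. The route through minimality should be replaced by this semiconjugacy argument.
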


One may ask whether the hypotheses are sharp. Up to the present moment, we do not know how to relax any of the hypotheses. If the above conjectures hold, then one can expect to remove expansivity or non-existence of conjugate points. Unlike partial hyperbolicity and the \textit{SH-saddle} property, none of those properties are $C^2$-open (in general) in the set of Riemannian metrics, and they are, respectively, topological and geometric features of the systems, not dynamical. In fact, if $g_1$ is a Riemannian metric with no conjugate points that is robustly expansive in the set of Hamiltonian flows, then it is proved in \cite{LRR} that the geodesic flow of $g_1$ is Anosov. Hence, in this case, it is already known that robustly there are no conjugate points and the geodesic flow is robustly transitive inside this class. In our case, those assumptions will be important to obtain the local product structure proved by Ruggiero in \cite{Rug1}, and this property in general is not known to be robust.


\subsection{Strategy of the proofs}

Altogether, the proof of Theorem \ref{THM A} goes as follows: the examples in \cite{dJPR} are partially hyperbolic, not Anosov, expansive, topologically transitive, and the metrics have no conjugate points. We establish the \textit{SH-saddle} property for the examples and prove that this is a $C^2$-open property in the set of Riemannian metrics with partially hyperbolic geodesic flow. We can then apply Theorem \ref{criterion of robust transitivity} to find a $C^2$-neighbourhood consisting of Riemannian metrics with topologically transitive geodesic flows, call it $\mathcal{U}$. By a result by Ruggiero \cite{Rug0}, the metrics given by the examples must lie in the boundary of the set of metrics with no conjugate points; thus, we find a smaller $C^2$-open set inside $\mathcal{U}$ of Riemannian metrics with topologically transitive geodesic flow and conjugate points, and the proof is complete. 

The proof of Theorem \ref{criterion of robust transitivity} is the most delicate part of this article. The arguments rely heavily on the dynamics and topological aspects of the setting rather than the Riemannian geometry. Moreover, we will use that sufficiently close to the initial metric, some properties persist, then we will restrict ourselves to smaller and smaller neighborhoods whenever necessary. Let us describe the steps of the proof:
\begin{itemize}
    \item[\textbf{Step 1}:] Let $g_1$ be a Riemannian metric and $\varphi^t$ as in the hypothesis. Expansivity and non-existence of conjugate points guarantee, by \cite{Rug1}, the existence of two continuous foliations with $C^0$ leaves given by the stable and unstable sets. Moreover, this pair of foliations presents a local product structure. This implies transitivity for $\varphi^t$.
    \item[\textbf{Step 2}:] From \cite{Rug0}, there exists a $C^2$-neighbourhood of $g_1$, say $\mathcal{U}_0$, such that if $g\in \mathcal{U}_0$, then its geodesic flow $\psi^t$ is semi-conjugated to $\varphi^t$ by a surjective continuous function $h$. In this context, $h$ can be chosen as $C^0$-close to the identity as needed by shrinking $\mathcal{U}_0$. Notice that, in general, this is not sufficient to conclude the topological transitivity of $\psi^t$.
    \item[\textbf{Step 3}:] We now focus on the second part of the hypothesis, the partial hyperbolicity. It is well known that this is a $C^2$-open property in the space of Riemannian metrics; thus, we can choose a $C^2$-neighbourhood of $g_1$, say $\mathcal{U}_1$, consisting of Riemannian metrics with partially hyperbolic geodesic flows. Moreover, in this context, we can follow the ideas of \cite{Pi2} and introduce the notion of \textit{SH-saddle} for partially hyperbolic geodesic flows. We then show that this is an open property in the $C^2$ topology for Riemannian metrics. Hence, we choose another $C^2$-neighborhood of $g_1$, say $\mathcal{U}_2\subset \mathcal{U}_1$, of Riemannian metrics with partially hyperbolic geodesic flows with the \textit{SH-saddle} property.
    \item[\textbf{Step 4}:] We prove that if $g\in \mathcal{U}:=\mathcal{U}_0\cap \mathcal{U}_1\cap \mathcal{U}_2$, then its geodesic flow $\psi^t$ is transitive. To this end, we show that given two open sets $U$ and $V$, they are mixed by the dynamics of $\psi^t$. First, the \textit{SH-saddle} property allows us to ``grow'', inside $U$ and $V$, topological disks tangent (in some sense) to the center-stable and center-unstable directions. By applying the semi-conjugation, these topological disks cannot collapse, although they may bend and twist a little bit. Then we can project them to some stable and unstable sets for the initial flow $\varphi^t$. We can show that their projections contain open sets inside these $C^0$-leaves. The transitivity of $\varphi^t$ mixes these open sets, and by adjusting the parameters $t$, we can conclude that there exists $T>0$ such that $\psi^T(U)\cap V\neq \emptyset$ and establish the topological transitivity of $\psi^t$.
    
\end{itemize}

\subsection{Organization of the paper}
In Section \ref{sphgeodflow}, we cover the necessary background in expansive geodesic flows and partial hyperbolicity. Section \ref{sectionshproperty} is devoted to introducing the notion of \textit{SH-saddle} property for partially hyperbolic flows and to proving some necessary results about this property. In Section \ref{Section Robust transitivity criterion}, we prove Theorem \ref{criterion of robust transitivity}. Finally, in Section \ref{proof of main theorem}, we show the examples given by \cite{dJPR} and \cite{CP} satisfy the \textit{SH-saddle} property and prove Theorem \ref{THM A}.

\section{Preliminaries} \label{sphgeodflow}

\subsection{Expansive geodesic flows}\label{ssexpandts}

Let $\varphi^t:X \to X$ be a continuous flow on a metric space $(X,d)$. The flow $\varphi^t$ is said to be \textit{expansive} if there exists a constant $\chi>0$ such that for every $x\in X$ we have the following property: if for a given $y\in X$ there exists a continuous and surjective map $r_y:\R\to \R$ with $r_y(0)=0$ such that $$d(\varphi^t(x),\varphi^{r_y(t)}(y))\leq \chi \ \ \text{for every} \ \ t\in \R,$$ then there exists $t_0\in \R$ such that $\varphi^{t_0}(x)=y$. We call $\chi$ the \textit{expansivity constant}. In other words, every two different orbits of an $\chi$-expansive flow are $\chi$-separated eventually in time.

We say that a continuous flow $\varphi^t:X \to X$ is \textit{topologically stable} if there exists a $C^0$ neighbourhood $\mathcal{V}$ of $\varphi^t$ such that, for every flow $\psi^t\in \mathcal{V}$ there are continuous and surjective functions $h:X \to X$ and $r:X \times \R \to \R$ with $r(\cdot ,0)=0$ such that $$h \circ \psi^t(x)=\varphi^{r(x,t)}\circ h(x) \ \ \text{for every} \ \ t\in \R, \ x\in X.$$ 

In dynamical systems, the \textit{stable} and \textit{unstable} sets of a point play a key role. For a flow $\varphi^t:X \to X$ we define these sets by:
\begin{eqnarray*}
	W^s_{\varphi}(x)=\{ y \in X: \lim_{t \to+\infty} d(\varphi^t(x),\varphi^t(y)) =0 \}, \\
	W^u_{\varphi}(x)=\{ y \in X: \lim_{t \to-\infty}d(\varphi^t(x),\varphi^t(y))=0\}.
\end{eqnarray*} 

From now on we are going to focus on geodesic flows, i.e., $\varphi^t$ will be the geodesic flow associated to a Riemannian manifold $(M,g)$ and the metric space $X$ will be the unit tangent bundle $T^1M=\{v\in TM: g(v,v)=1\}$. We remark that the unit tangent bundle depends on the metric, but two different Riemannian metrics have diffeomorphic unit tangent bundles. Thus, for consistency, we keep the notation $T^1M$ for all these unit tangent bundles

In hyperbolic dynamics, a key property is that the stable and unstable sets are in fact, differentiable manifolds with local product structure, and moreover, this local behaviour implies strong dynamic consequences. For expansive geodesic flows of Riemannian manifolds with no conjugate points, we have an analogous result due to R. O. Ruggiero. We recall that a Riemannian manifold has no conjugate points if the exponential map is non-singular at every point. 

\begin{thm}[Theorems 1 and 2 in \cite{Rug1}] \label{teoLPSruggiero}
	Let $(M,g_1)$ be a $C^{\infty}$ compact Riemannian manifold of dimension $n$ with no conjugate points. Let $\varphi^t:T^1M\to T^1M$ be the geodesic flow on the unit tangent bundle and assume that $\varphi^t$ is $\chi$-expansive. Then the following properties hold:
	\begin{enumerate}
        \item For every point $\theta\in T^1M$ the sets $W^s_{\varphi}(\theta)$ and $W^u_{\varphi}(\theta)$ are $C^0$ sub-manifolds of dimension $n-1$. Moreover, $W^s_{\varphi}$ and $W^u_{\varphi}$ give continuous foliations of $T^1M$ which induce a local product structure.
        \item The set of closed orbits is dense in $T^1M$.
        \item The flow $\varphi^t$ is topologically transitive.
	\end{enumerate}
\end{thm}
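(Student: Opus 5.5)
The plan follows Ruggiero's approach: build the stable and unstable sets from the geometry of horospheres in the universal cover, then extract density of closed orbits and transitivity from the local product structure via an Anosov-type closing and shadowing argument.

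\textbf{Step 1: the foliations.} Lift to the universal cover $\tilde M$, which has no conjugate points. For $\theta\in T^1\tilde M$ consider the Busemann function $b_\theta(x)=\lim_{t\to\infty}(d(x,\gamma_\theta(t))-t)$. Its level set through the footpoint is the stable horosphere $H^+(\theta)$. The candidate stable set is $\tilde W^s(\theta)=\{-\nabla b_\theta(x): x\in H^+(\theta)\}$, and the center-stable set $\tilde W^{cs}(\theta)$ is the union of the $\tilde W^s$ over the orbit. Without conjugate points the Busemann functions are $C^1$ and depend continuously on $\theta$, so these are continuous families of $C^0$ graphs of dimension $n-1$ over the horospheres.

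Expansivity is then used in two ways:
\begin{itemize}
\item to show that orbits on the same horosphere are forward asymptotic, and not merely bounded apart (\emph{divergence of geodesic rays});
\item to show conversely that any point whose forward orbit stays $\chi$-close, up to reparametrization, lies on the center-stable set.
\end{itemize}
This identifies $W^s_\varphi(\theta)$ with the projection of $\tilde W^s(\theta)$, giving item (1)'s manifold structure. Continuity of the foliation follows from continuity of the Busemann functions in $\theta$.

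\textbf{Step 2: local product structure.} For nearby $\theta,\eta$ we need $W^{cs}_{loc}(\theta)\cap W^u_{loc}(\eta)$ to be a single point depending continuously on $(\theta,\eta)$. Existence will follow from a degree or invariance-of-domain argument, since the two $C^0$ families have complementary dimensions $n$ and $n-1$ in the $(2n-1)$-manifold. Uniqueness comes from expansivity: two intersection points would give distinct orbits staying $\chi$-close for all time. The key geometric input is that no geodesic is bi-asymptotic to another, i.e.\ $H^+(\theta)\cap H^-(\eta)$ is transverse in the topological sense.

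I expect this step to be the main obstacle. Transversality of $C^0$ foliations is not automatic, and it requires the quasi-convexity and visibility-type estimates that expansivity gives for the horospheres.

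\textbf{Step 3: closed orbits and transitivity.} With a local product structure and expansivity, the standard Bowen-style arguments apply, as for Anosov flows. First, a shadowing (closing) lemma: a pseudo-orbit that nearly returns is shadowed by a periodic orbit, obtained by iterating the bracket operation, with convergence guaranteed by uniform contraction of the local stable sets given by expansivity. Poincaré recurrence for the Liouville measure makes recurrent points dense, so closing gives (2).

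For (3), take open sets $U,V$. By recurrence and density of periodic orbits, pick periodic points $p\in U$ and $q\in V$. Since the flow preserves Liouville measure and $T^1M$ is connected, chain connections between them can be built. Shadowing then converts these into an orbit from $U$ to $V$. Alternatively, one can show that $W^u(p)$ and $W^s(q)$ are dense using a spectral decomposition argument together with connectedness.
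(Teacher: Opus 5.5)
The paper gives no proof of this statement; it is quoted from \cite{Rug1}. So your outline has to stand on its own, and it has a genuine gap in Step~1, which is where essentially all of the content of Ruggiero's theorem lies.

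You take as known that Busemann functions of a manifold without conjugate points depend continuously on $\theta$, and you then deduce continuity of the foliations from this. For fixed $\theta$, $b_\theta$ is indeed $C^1$. But continuity of $\theta\mapsto b_\theta$, i.e.\ of the horospherical foliations, is not available for general manifolds without conjugate points. It is known under extra hypotheses (no focal points, bounded asymptotes, divergence of geodesic rays, hyperbolic universal cover), and in the expansive case it is one of the things that must be proved.

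The same applies to your identification of $W^s_\varphi(\theta)$ with the horospherical leaf. It is not automatic that an integral curve of $-\nabla b_\theta$ stays at bounded distance from $\gamma_\theta$, let alone converges to it. Nor is it automatic that a geodesic staying at bounded distance from $\gamma_\theta$ for $t\ge 0$ is such an integral curve. Your two bullet points say that expansivity settles both directions, but they supply no mechanism.

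The missing idea is the bridge from expansivity to the coarse hyperbolic geometry of $\tilde M$. To my understanding, Ruggiero's argument goes through precisely this: expansivity forces visibility or Gromov hyperbolicity of $\tilde M$ and rules out distinct bi-asymptotic geodesics. From there one gets divergence of geodesic rays, convergence of asymptotes, and continuity of the horospheres. One also gets that small plaques of the leaves are \emph{dynamically} small, meaning orbits stay uniformly close for all $t\ge 0$ (resp.\ $t\le 0$). That last fact is also what you need before expansivity can be applied to two intersection points in Step~2.

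On Step~2, complementary dimension by itself gives nothing. The parabolas $y=x^2+c$ foliate the plane, and the $x$-axis meets the leaf $c=0$ but misses every leaf with $c>0$. Note that uniqueness also fails there, since leaves with $c<0$ meet the axis twice.

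What actually makes your invariance-of-domain idea work is your uniqueness statement, applied to all nearby pairs. In a foliation chart for $W^{cs}$ at $\theta$, project a local unstable plaque of $\eta$ to the $(n-1)$-dimensional transversal. This map is continuous and, by uniqueness, injective, hence an open embedding with local degree $\pm1$. Stability of the Brouwer degree under $C^0$-small perturbations then gives an image of uniform size for all $\eta$ near $\theta$. So existence and continuity of the bracket follow from Step~1 plus expansivity, and no separate ``topological transversality'' needs to be established. In the visibility setting, existence can also be seen directly from the geodesic joining $\gamma_\eta(-\infty)$ to $\gamma_\theta(+\infty)$.

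Step~3 is an acceptable Bowen-type route once the local product structure is in hand:
\begin{itemize}
\item Expansivity plus local product structure give shadowing for the flow. This is the same ingredient behind the topological stability result of \cite{Rug2} that the paper uses.
\item Full support of the Liouville measure and connectedness of $T^1M$ give chain transitivity.
\item Shadowing then yields both the closing lemma and transitivity.
\end{itemize}
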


By local product structure we mean the following: for every $\theta \in T^1M$ there is a local transverse section $\Sigma_\theta=\exp_{\theta} \left\{w\in T_{\theta}T^1M: \norm{w}<\chi, \ g(w,X(\theta))=0\right\}$, where $X(\theta)$ is the direction of the flow, and a homeomorphism $F:(-1,1)^{2(n-1)}\to \Sigma_\theta$ such that:
\begin{enumerate}
	\item $F((-1,1)^{n-1}\times \{y_0\})$ is a subset of the connected component of 
	$$\bigcup_{t\in \R} \varphi^t(W^s_{\varphi}(F(0,y_0)))\cap \Sigma_\theta 
	$$ containing $F(0,y_0)$ for every $y_0\in (-1,1)^{n-1}$.
	
	\item $F(\{x_0\}\times (-1,1)^{n-1})$ is a subset of the connected component of 
	$$\bigcup_{t\in \R} \varphi^t(W^u_{\varphi}(F(x_0,0)))\cap \Sigma_\theta
	$$ containing $F(x_0,0)$ for every $x_0\in (-1,1)^{n-1}$.
\end{enumerate}
The sets 
$$W^{cs}_{\varphi}(\theta)=\bigcup_{t\in \R}\varphi^t(W^s_{\varphi}(\theta)) \ \ \text{and} \ \ W^{cu}_{\varphi}(\theta)=\bigcup_{t\in \R}\varphi^t(W^u_{\varphi}(\theta))
$$ are called the \textit{center stable} and \textit{center unstable} sets of $\theta$ respectively. 

\begin{rem}\label{local unstable}
By the continuity of unstable manifolds and the local product structure, for a fixed \(\epsilon > 0\), the local unstable manifolds
\[
W^u_{\epsilon}(z) = \left\{ w \in T^1M \;\middle|\; w\in W^u(z),\; \sup_{t \le 0} d(\varphi^t(z), \varphi^t(w)) \le \epsilon \right\}
\]
are locally continuous sets. The local stable manifolds are defined analogously.
\end{rem}



For diffeomorphisms with uniformly hyperbolic behavior, the stable and unstable sets with local product structure are enough to have shadowing properties and therefore topological stability. Since for expansive geodesic flows with no conjugate points we also have stable and unstable sets with local product structure, the same shadowing lemma holds for this kind of geodesic flows, and in consequence, we have the following stability result. The proof once again is due to R. O. Ruggiero.




\begin{thm}[Theorem 3.2 in \cite{Rug2}] \label{TeoRugtopstab}
	Let $\varphi^t:T^1M \to T^1M$ be a geodesic flow of a compact, $n$-dimensional manifold $(M,g_1)$ with no conjugate points. If $\varphi^t$ is expansive, then it is topologically stable. 
\end{thm}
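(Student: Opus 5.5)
Theorem~\ref{TeoRugtopstab} asserts that an expansive geodesic flow $\varphi^t$ of a compact manifold without conjugate points is topologically stable. I would prove it by adapting Walters' argument (expansivity plus shadowing implies topological stability) to flows, using the structure from Theorem~\ref{teoLPSruggiero}. The steps are as follows.

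\textbf{Step 1: a shadowing lemma for $\varphi^t$.} For every $\epsilon>0$ there is $\delta>0$ such that every $(\delta,1)$-pseudo-orbit of $\varphi^t$ is $\epsilon$-shadowed by a true orbit, up to a reparametrization $r$ close to the identity. To build the shadow:
\begin{enumerate}
\item Take a pseudo-orbit with jump points $x_k$, times $t_k\ge 1$, and $d(\varphi^{t_k}(x_k),x_{k+1})<\delta$.
\item Use the local product structure to bracket consecutive pieces. The point $[\varphi^{t_k}(x_k),x_{k+1}]$ is defined as the intersection, inside the transverse section $\Sigma_{x_{k+1}}$, of the local center-stable set of $x_{k+1}$ with the local center-unstable set of $\varphi^{t_k}(x_k)$.
\item Iterate this bracketing over finite segments and pass to a limit by compactness to obtain the shadowing orbit.
\end{enumerate}
The essential inputs are uniform continuity of the foliations $W^s_{\varphi}$ and $W^u_{\varphi}$ and of the product homeomorphism $F$, both available by compactness.

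\textbf{Step 1, the main obstacle.} There is no exponential contraction along $W^s_{\varphi}$, so the usual geometric-series estimate is unavailable. I would replace it with the following consequence of expansivity and compactness:
\begin{quote}
For every $\epsilon>0$ there is $T>0$ such that if $y\in W^s_{\varphi,\chi}(x)$, then $d(\varphi^t x,\varphi^t y)<\epsilon$ for all $t\ge T$, uniformly in $x$.
\end{quote}
I would prove this by contradiction. A uniform failure produces, in the limit, two distinct points whose orbits stay $\chi$-close for all time, which contradicts expansivity. With this uniform asymptotic contraction, the errors introduced at the jumps stay bounded: choose pseudo-orbit time steps long compared with $T$, rescaling by flowing if necessary.

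\textbf{Step 2: from shadowing to the semiconjugacy.} Let $\psi^t$ be a flow $C^0$-close to $\varphi^t$. For each $x$, the $\psi$-orbit of $x$ is a $(\delta,1)$-pseudo-orbit of $\varphi^t$. Let $h(x)$ be the point of its shadowing orbit, chosen (for instance) on the transversal $\Sigma_x$.
\begin{itemize}
\item \emph{Uniqueness.} Expansivity with $\epsilon<\chi/2$ makes $h(x)$ unique up to a small time shift, and this shift is fixed by the section choice.
\item \emph{Continuity.} If $x_n\to x$, the shadows of $x_n$ converge along a subsequence to a shadow of $x$, and uniqueness gives continuity of $h$.
\item \emph{Conjugacy equation.} The identity $h\circ\psi^t(x)=\varphi^{r(x,t)}\circ h(x)$ holds because $h(\psi^t x)$ shadows the same pseudo-orbit, so uniqueness again applies. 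Continuity of $r$ follows as well.
\end{itemize}

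\textbf{Step 2: surjectivity.} $h$ is $\epsilon$-close to the identity, so for small $\epsilon$ it is homotopic to the identity and has degree one on the closed manifold $T^1M$; hence it is surjective.
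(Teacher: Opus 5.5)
Your overall route is the one the paper indicates: local product structure from Theorem~\ref{teoLPSruggiero}, then a shadowing lemma, then a Walters-type construction of the semiconjugacy. The paper gives no proof of its own and defers to Ruggiero, and your Step~2 is the standard argument. The problem is in the step you single out as the main obstacle.

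Your compactness argument for uniform asymptotic contraction ends with limit points $a,b$ satisfying $d(a,b)\ge\epsilon$ and $d(\varphi^t a,\varphi^t b)\le\chi$ for all $t\in\R$. You declare this a contradiction with expansivity, but for a flow it is not. Expansivity only gives $b=\varphi^{s}(a)$ for some $s$, and two distinct points of one orbit can stay close forever. Here, in the Sasaki metric, $d(\varphi^t a,\varphi^{t+s}a)=|s|$ for every $t$ whenever $0<|s|$ is below the injectivity radius: the flow has unit speed and short geodesic segments minimize. Such a pair is therefore entirely compatible with expansivity. Since this lemma is your only substitute for exponential contraction, the shadowing lemma, and with it the whole proof, is not established as written.

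What is missing is the transversality of local stable sets to the flow, which does not follow from expansivity alone. You can supply it as follows.
\begin{itemize}
\item Use the continuity of the stable foliation in Theorem~\ref{teoLPSruggiero}(1) (cf.\ Remark~\ref{local unstable}) to pass the relation $\varphi^{t_n}y_n\in W^s_{\varphi,\chi}(\varphi^{t_n}x_n)$ to the limit. This gives $b=\varphi^s(a)\in W^s_\varphi(a)$.
\item Then $d(\varphi^t a,\varphi^{t+s}a)=|s|$ for all $t$, which does not tend to $0$ unless $s=0$. This contradicts $d(a,b)\ge\epsilon$.
\end{itemize}
This argument needs $|s|$ small. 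So you should use the Bowen--Walters form of expansivity: after shrinking the constant, orbits that stay close are related by a \emph{small} time shift. You should then work with local stable sets of that smaller size. Smallness is essential, because $W^s_\varphi(a)$ can contain distant points of the orbit of $a$. For example, it contains $\varphi^P(a)$ whenever $a$ is forward asymptotic to a closed orbit of period $P$.

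The same small-shift version of expansivity is what you implicitly use for the uniqueness of $h(x)$ in Step~2. With this patch, the remainder of your outline goes through.
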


From now on, we fix a compact Riemannian manifold $(M,g_1)$ of dimension $n$ with no conjugate points such that its geodesic flow $\varphi^t:T^1M \to T^1M$ is $\chi$-expansive. We also fix the $C^0$ neighbourhood $\mathcal{V}_\chi$ of $\varphi^t$ from Theorem \ref{TeoRugtopstab}. Then, for every flow $\psi^t\in \mathcal{V}_\chi$ there are continuous and surjective functions $h_{\psi}:T^1M \to T^1M$ and $r_{\psi}:T^1M \times \R \to \R$ with $r_{\psi}(x,0)=0$ such that: 
\begin{equation}\label{conjrelation}
	h_{\psi} \circ \psi^t(\theta)=\varphi^{r_{\psi}(\theta,t)}\circ h_{\psi}(\theta) \ \ \text{for every} \ \ t\in \R, \ \theta\in T^1M.
\end{equation} 

Notice that topological stability is a weaker notion than structural stability. The problem is that the function $h_{\psi}$ mentioned above is continuous and surjective, but in most cases will not be injective. Nevertheless, if we are sufficiently close to $\varphi^t$, the fibers of the map $h_{\psi}$ will have small size, and this will be enough for our purposes. Here by \textit{fiber} of $\theta \in T^1M$ we mean the set $h_{\psi}^{-1}(h_{\psi}(\theta))$. In particular, the map $h_{\psi}$ is injective if and only if $h_{\psi}^{-1}(h_{\psi}(\theta))=\{ \theta\}$ for every $\theta \in T^1M$ (the fibers are trivial).

The function $h_{\psi}$ is given by the Shadowing Lemma above and depends continuously on $\psi^t$. In fact, the closer $\psi^t$ is to $\varphi^t$, the smaller the $C^0$ distance between $h_{\psi}$ and $\textit{Id}$. This implies the following lemma.



\begin{lem} \label{controlfibras}
For every $\delta>0$ there is a $C^0$ neighborhood $\mathcal{V}_\delta$ of $\varphi^t$ such that, for every flow $\psi^t \in \mathcal{V}_\delta$ we have $\operatorname{diam}(h_{\psi}^{-1}(h_{\psi}(\theta)))<\delta$ for every $\theta \in T^1M$. 
\end{lem}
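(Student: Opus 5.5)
The idea is to derive Lemma \ref{controlfibras} from the continuity of the semi-conjugacy $h_\psi$ in $\psi^t$. The paper asserts that $h_\psi$ comes from the shadowing lemma and satisfies $d_{C^0}(h_\psi,\mathit{Id})\to 0$ as $\psi^t\to\varphi^t$. Once we know $h_\psi$ is uniformly close to the identity, any two points in the same fiber are close to each other, because each is close to their common image. Concretely, let $\eta>0$ be such that $d(h_\psi(\theta),\theta)<\eta$ for all $\theta\in T^1M$. If $h_\psi(\theta_1)=h_\psi(\theta_2)$, the triangle inequality gives
$$d(\theta_1,\theta_2)\le d(\theta_1,h_\psi(\theta_1))+d(h_\psi(\theta_2),\theta_2)<2\eta.$$
Hence $\operatorname{diam}(h_\psi^{-1}(h_\psi(\theta)))\le 2\eta$.

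The plan has three steps. First, fix $\delta>0$ and set $\eta=\delta/3$, so that the bound $2\eta$ is strictly less than $\delta$. Second, invoke the construction behind Theorem \ref{TeoRugtopstab} to produce a $C^0$ neighbourhood $\mathcal V_\delta\subset\mathcal V_\chi$ such that $\sup_\theta d(h_\psi(\theta),\theta)<\eta$ for every $\psi^t\in\mathcal V_\delta$. Third, apply the triangle inequality above.

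The only real content is the second step. One has to check that Ruggiero's shadowing construction gives this quantitative control. In that construction, $h_\psi(\theta)$ is defined as the unique $\varphi$-orbit point that $\epsilon$-shadows the $\psi$-orbit of $\theta$, viewed as a $\varphi$-pseudo-orbit, with a reparametrization. If $\psi^t$ is $C^0$-close to $\varphi^t$, the $\psi$-orbit is a $\beta$-pseudo-orbit for $\varphi$ with $\beta$ small. The shadowing lemma, which rests on the local product structure of Theorem \ref{teoLPSruggiero}, then yields a shadowing point within $\epsilon(\beta)$ of $\theta$ at time $0$, where $\epsilon(\beta)\to0$ as $\beta\to0$.

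Uniqueness of this shadowing point uses expansivity. It requires $\epsilon(\beta)<\chi/2$, and that is guaranteed by shrinking the neighbourhood. I therefore expect the main obstacle to be confirming that the shadowing constant depends only on $\beta$, and not on the orbit. This follows from compactness of $T^1M$ and the uniform size of the product neighbourhoods $\Sigma_\theta$. Once that is granted, the lemma follows immediately.
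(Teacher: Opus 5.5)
Your proposal is correct and follows essentially the same route as the paper, which likewise takes as given that $d_{C^0}(h_\psi,\mathit{Id})$ can be made small (it uses $\delta/4$) by shrinking the neighbourhood, and then concludes with the triangle inequality. The only cosmetic difference is that the paper bounds the distance from each fiber point to $\theta$ and then combines the two bounds, whereas you compare the two fiber points directly through their common image, which gives the bound $2\eta$ in one step.
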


\begin{proof}
	Given $\delta>0$ take a small neighborhood $\mathcal{V}$ of $\varphi^t$ such that for every $\psi^t \in \mathcal{V}$ we have $d_{C^0}(h_{\psi},\textit{Id})<\delta/4$. Then, if $\theta\in T^1M$ and $\eta, \xi \in h_{\psi}^{-1}(h_{\psi}(\theta))$ by the triangular inequality, we have
$$d(\theta,\eta) \leq  d(\theta, h_{\psi}(\theta))+d(h_{\psi}(\theta),h_{\psi}(\eta))+d(h_{\psi}(\eta),\eta) < \frac{\delta}{4}+0+\frac{\delta}{4} = \frac{\delta}{2}.$$
In the same way $d(\theta,\xi)<\delta/2$. Then by the triangular inequality $d(\eta,\xi)<\delta$.
\end{proof}

The previous lemma is going to be important for us during the technical part of the argument in the proof of Theorem \ref{criterion of robust transitivity}. For that, we will need to choose $\delta>0$ much smaller than some uniform constants  (see Lemma \ref{remove h}).

In the same way as we mentioned above, the function $r_{\psi}:T^1M \times \R \to \R$ also varies continuously with the flow $\psi^t$. 



\subsection{Uniform hyperbolicity} \label{sAnosov}
We say that a diffeomorphism $f:M \to M$ is \textit{Anosov} if there exists a nontrivial $Df$-invariant splitting $TM=E_f^{s} \oplus E_f^{u}$ of the tangent bundle and constants $\lambda _s, \lambda_u$ such that: 
\begin{equation*}
	\|D f_x|_{E^{s}_f}\|< \lambda_s <1< \lambda_u   <\  \|D f_x|_{E^{u}_f}\|. 
\end{equation*}
It is well known that the bundles $E_f^{s}$ and $E_f^{u}$ integrate into unique $f$-invariant foliations $\cW^{s}_f$ and $\cW^{u}_f$ respectively, called the \textit{stable} and \textit{unstable} foliations \cite{HPS}. 

In the same way, we say that a flow $\varphi^t:M\to M$ generated by a vector field $X:M\to TM$ is Anosov if the tangent bundle $TM$ splits into $D\varphi$-invariant continuous subbundles $TM=E^{s}_{\varphi}\oplus \langle X \rangle \oplus E^{u}_{\varphi}$ such that 
\begin{equation*}
	\norm{D\varphi^t(v^{ss})}<\lambda_{ss}^{t} \ \quad \text{and} \quad \lambda_{uu}^t< \norm{D\varphi^t(v^{uu})} \ \ \text{for} \ t>0
\end{equation*}
for some Riemannian metric $\norm{\cdot}$ and some $\lambda_{s}<1<\lambda_{u}$ and all unit vectors $v^{s}\in E^{s}_{\varphi}$ and $v^{u}\in E^{u}_{\varphi}$. 
Finally, we say that a Riemannian metric $(M,g)$ is Anosov if its corresponding geodesic flow $\varphi^t: T^1M\to T^1M$ is Anosov. Notice that this property is robust in the $C^2$ topology on the space of Riemannian metrics. 

It is very easy to see that an Anosov flow is an expansive flow. Moreover, a classical result states that if a Riemannian metric $g$ has negative sectional curvature at every point, then its geodesic flow is an Anosov flow \cite{An}, and therefore an expansive geodesic flow. The converse is not true, as our example in \cite{dJPR} shows.

\subsection{Partial hyperbolicity}
\label{Preliminares Partial Hyperbolicity}
We say that a diffeomorphism $f:M \to M$ is \textit{partially hyperbolic} if there exists a nontrivial $Df$-invariant splitting $TM=E_f^{ss} \oplus E_f^c \oplus E_f^{uu}$ of the tangent bundle and continuous functions $\lambda _s,\lambda _c^-, \lambda _c^+, \lambda_u$ with $\lambda_s <1< \lambda_u$ and $\lambda_s < \lambda_c^- < \lambda_c^+< \lambda_u$ such that: 
\begin{equation*}
	\|D f_x|_{E^{ss}_f}\|<\lambda_s, \qquad  \lambda_c^- <\  \|D f_x|_{E^c_f}\|< \lambda_c^+ , \qquad  \lambda_u   <\  \|D f_x|_{E^{uu}_f}\|. 
\end{equation*}
We will denote by $\cPH(M)$ the set of all partially hyperbolic diffeomorphisms on $M$. Like in the Anosov case, it is well known that the strong bundles $E_f^{uu}$ and $E_f^{ss}$ integrate into unique $f$-invariant foliations $\cW^{uu}_f$ and $\cW^{ss}_f$ respectively, called the \textit{strong unstable} and \textit{strong stable} foliations \cite{HPS}. For $* = uu,ss$, and  for any $x \in M$, we denote by $\cW_f^*(x)$ the leaf of $\cW_f^*$ through $x$. In the following, for any $*\in \{ss,uu\}$, we denote by $d_{\mathcal{W}_f^*}$ the leaf-wise distance, and for any $x \in M$ and for any  $\varepsilon>0$, we denote by 
$$\cW_f^*(x,\varepsilon):=\{y \in \cW_f^*(x): d_{\cW_f^*}(x,y)< \varepsilon\}$$ the $\varepsilon$-ball in $\cW_f^*$ of center $x$ and radius $\varepsilon$.

In the same way, we say that a flow $\varphi^t:M\to M$ generated by a vector field $X:M\to TM$ is partially hyperbolic if the tangent bundle $TM$ splits into $D\varphi$-invariant continuous subbundles $TM=E^{ss}_{\varphi}\oplus E^{c}_{\varphi}\oplus \langle X \rangle \oplus E^{uu}_{\varphi}$ such that 
\begin{equation*}
	\norm{D\varphi^t(v^{ss})}<\lambda_{ss}^{t}<\norm{D\varphi^t(v^c)} < \lambda_{uu}^t< \norm{D\varphi^t(v^{uu})} \ \ \text{for} \ t>0
\end{equation*}
for some Riemannian metric $\norm{\cdot}$ and some $\lambda_{ss}<1<\lambda_{uu}$ and all unit vectors $v^{ss}\in E^{ss}_{\varphi}$, $v^c\in E^c_{\varphi}$ and $v^{uu}\in E^{uu}_{\varphi}$. 
Like in the discrete case, the strong bundles $E^{ss}_{\varphi}$ and $E^{uu}_{\varphi}$ integrate to foliations $\cW^{uu}_{\varphi}$ and $\cW^{ss}_{\varphi}$, which are invariant by the flow $\varphi^t$.

Notice that if $\varphi^t:M \to M$ is a partially hyperbolic flow with a splitting of the form $TM=E^{ss}_{\varphi}\oplus E^{c}_{\varphi}\oplus \langle X \rangle \oplus E^{uu}_{\varphi}$, then for every $T\in \R$ the map $f:=\varphi^T:M\to M$ is a partially hyperbolic diffeomorphism with a splitting $TM=E_f^{ss} \oplus E_f^c \oplus E_f^{uu}$, where $E^{ss}_f=E^{ss}_{\varphi}$, $E^{uu}_f=E^{uu}_{\varphi}$ and $E^c_f=E^c_{\varphi}\oplus \langle X \rangle$. In particular, $\text{dim}E^c_f=\text{dim}E^c_{\varphi}+1$, since the direction of the flow becomes part of the center bundle of $f$.

Finally, we say that a Riemannian metric $(M,g)$ is partially hyperbolic if its corresponding geodesic flow $\varphi^t: T^1M\to T^1M$ is partially hyperbolic. Notice that this property is robust in the $C^2$ topology on $g$.

\section{SH-Saddle property} \label{sectionshproperty} 


In this section, we are going to introduce the definition of the SH-Saddle property for flows (in fact, for geodesic flows). We begin by presenting the definition of the SH-Saddle property for diffeomorphisms as well as some results given in \cite{Pi2}. Then we translate these definitions and results to the flow setting.

\subsection{SH-Saddle property for diffeomorphisms}

Let $V$ be a $\mathbb{R}$-vector space with an inner product. A \textit{cone} in $V$ is a subset $\cC$ such that there is a non-degenerate quadratic form $B:V \to \mathbb{R}$ such that $$\cC=\{v\in V: B(v)\leq 0\}.$$ Analogously we can express the cone $\cC$ according to a decomposition $V=E\oplus F$: 
\begin{equation} \label{eqcono1}
	\cC=\{v=(v_E,v_F): \norm{v_E}\leq \mu \norm{v_{F}}\}
\end{equation} for some $\mu>0$. In this case we observe that $B(v)=-\mu^2\norm{v_{F}}^2+\norm{v_{E}}^2$. We are going to say that the number $\mu$ in Equation \eqref{eqcono1} is the \textit{size} of the cone. In some cases, we will note by $\cC_{\mu}$ instead of $\cC$ to make emphasis on the size of $\cC$. The \textit{dimension of a cone} is the maximal dimension of any subspace contained in the cone. 

Now let $f$ be a partially hyperbolic diffeomorphism with a splitting of the form $TM=E_f^{ss} \oplus E_f^c \oplus E_f^{uu}$. We are going to note by $k=\text{dim}E^c_f$. Then a \textit{$d$-center cone} in $x\in M$ is simply a cone $\cC(x)$ in $E^{c}_f(x)$ of dimension $d\leq k$. Recall that 
$$\cW_f^*(x,\varepsilon):=\{y \in \cW_f^*(x): d_{\cW_f^*}(x,y)< \varepsilon\}$$ is the $\varepsilon$-ball in $\cW_f^*$ of center $x$ and radius $\varepsilon$ for  $*\in \{ss,uu\}$. 

\begin{df}[SH-Saddle property for unstable foliations] \label{defshuu}
	Given $f\in \cPH(M)$ we say that the strong unstable foliation $\cW^{uu}_f$ has the SH-Saddle property of index $d\leq k$ if there are constants $L>0$, $\mu>0$, $\lambda_0>1$ and $C>0$ such that the following hold. 
	
	For every point $x\in M$, there is a point $x^u\in \cW^{uu}_f(x,L)$ such that:
	\begin{enumerate}
		\item \label{sh1} There is a $d$-center cone field of size $\mu$ along the forward orbit of $x^u$ which is $Df$-invariant, i.e. there exist $\cC^u_{\mu}(f^l(x^u))\subset E^c_f(f^l(x^u))$ such that $Df(\cC^u_{\mu}(f^l(x^u)))\subset \cC^u_{\mu}(f^{l+1}(x^u))$ for every $l\geq 0$. \label{c1shu}
		\item $\norm{Df^n_{f^l(x^u)}(v)}\geq C\lambda_0^n\norm{v}$  for every $v\in \cC^u_{\mu}(f^{l}(x^u))$ and every $l,n\geq0$. \label{c2shu}
	\end{enumerate}
\end{df}

We can make an analogous definition for the strong stable foliation. In this case, we ask for the invariance of the cones for the past.
\begin{df}[SH-Saddle property for stable foliations] \label{defshss}
	Given $f\in \cPH(M)$ we say that the strong stable foliation $\cW^{ss}_f$ has the SH-Saddle property of index $d\leq k$ if there are constants $L>0$, $\mu>0$, $\lambda_{0}>1$ and $C>0$ such that the following hold. 
	
	For every point $x\in M$, there is a point $x^s\in \cW^{ss}_f(x,L)$ such that: 
	\begin{enumerate}
		\item There is a $d$-center cone field of size $\mu$ along the backward orbit of $x^s$ which is $Df^{-1}$-invariant, i.e. there exist $\cC^s_{\mu}(f^l(x^s))$ such that $Df^{-1}(\cC^s_{\mu}(f^l(x^s)))\subset \cC^s_{\mu}(f^{l-1}(x^s))$  for every $l \leq 0$. \label{c1shs}
		\item $\norm{Df^n_{f^l(x^s)}(v)}\geq C\lambda_0^{-n}\norm{v}$  for every $v\in \cC^s_{\mu}(f^l(x^s)))$ and every $l,n\leq 0$. \label{c2shs}
	\end{enumerate}
\end{df}
\begin{df}[SH-Saddle property] \label{defshf}
	We say that $f\in \cPH(M)$ has $(d_1,d_2)$ SH-Saddle property if the following conditions hold:
	\begin{enumerate} 
		\item $\cW^{ss}_f$ has the SH-Saddle property of index $d_1$.
		\item $\cW^{uu}_f$ has the SH-Saddle property of index $d_2$.
	\end{enumerate}
\end{df}	

Notice that in general we do not necessarily have $d_1+d_2=k$. In fact, in many cases, we are going to have $d_1+d_2<k$. In some parts of the article, for simplicity and when it is not needed, we are going to omit the indexes $(d_1,d_2)$ and we are just going to say that a partially hyperbolic diffeomorphism has the SH-Saddle property. 

We also remark that the SH-Saddle property does not depend on the choice of the Riemannian metric. Therefore, we get the following proposition.

\begin{prop} \label{fshifffksh}
	A partially hyperbolic diffeomorphism $f$ has the SH-Saddle property if and only if $f^N$ has the SH-Saddle property for some $N\in \N$.
\end{prop}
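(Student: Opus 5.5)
We prove the two implications separately. Both rest on two facts. First, $f$ and $f^N$ are partially hyperbolic with the same splitting $E^{ss}\oplus E^c\oplus E^{uu}$, hence the same strong foliations $\cW^{ss}$, $\cW^{uu}$. Second, since $M$ is compact, $\norm{Df^{\pm1}}$ and $\norm{Df^{\pm1}|_{E^c}}$ are bounded above by some $K\geq 1$. We only treat $\cW^{uu}$ (Definition \ref{defshuu}); the stable case (Definition \ref{defshss}) is symmetric, with $f^{-1}$ in place of $f$.

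\emph{($f\Rightarrow f^N$).} We keep the same point $x^u\in\cW^{uu}(x,L)$ and keep the cones $\cC^u_\mu(f^{lN}(x^u))$ at the iterates $f^{lN}(x^u)$. Invariance under $Df^N$ follows by composing the $N$ one-step inclusions. For the growth condition, $\norm{D(f^N)^n_{f^{lN}(x^u)}v}\geq C\lambda_0^{Nn}\norm{v}=C(\lambda_0^N)^n\norm v$. So $f^N$ has the property with constants $L,\mu,\lambda_0^N,C$.

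\emph{($f^N\Rightarrow f$).} This is the substantive direction. Let $x^u$ be the point given for $f^N$, with cones $\cC_l:=\cC^u_\mu(f^{lN}(x^u))$ invariant under $Df^N$. We must define cones at the intermediate iterates $f^{lN+j}(x^u)$, $0<j<N$. The natural choice is $\cC'_{lN+j}:=Df^j(\cC_l)$, taking $\cC'_{lN}=\cC_l$.

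Invariance under $Df$ is then immediate for $j<N-1$. For the step $j=N-1\to N$ we get $Df^N(\cC_l)\subset\cC_{l+1}$, which is exactly the $f^N$-invariance.

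Each $\cC'_{lN+j}$ is a $d$-dimensional cone in $E^c$, since $Df^j$ is a linear isomorphism preserving $E^c$. The delicate point is the "size $\mu$" requirement: the image of a cone of size $\mu$ under a linear isomorphism is again a cone (a quadratic form pulled back by $Df^{-j}$), but its size with respect to a splitting may change.

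I expect this to be the main obstacle. To resolve it, I would argue that the definition does not really use a fixed size, since only cone-ness and dimension matter. Alternatively, I would rechoose the splitting at each intermediate point as $E=Df^j(E_l)$, $F=Df^j(F_l)$ and adapt the inner product to $Df^j$; because $j<N$, this changes the size by at most a factor $K^{2N}$. This is consistent with the paper's remark that the property is independent of the Riemannian metric. If a uniform size is strictly needed, I would replace $\mu$ by $\mu'=K^{2N}\mu$ and, where necessary, note that enlarging the target cone preserves the inclusions.

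For the growth, take $v\in\cC'_{lN+j}$ and $n\geq0$, and write $n+j=qN+r$ with $0\leq r<N$. Then $v=Df^j w$ with $w\in\cC_l$, and
\[
\norm{Df^n v}=\norm{Df^{r}Df^{qN}w}\geq K^{-r}C\lambda_0^{q}\norm w\geq K^{-2N}C\lambda_0^{q}\norm v .
\]
Since $q\geq (n+j)/N-1\geq n/N-1$, this gives $\norm{Df^nv}\geq C'(\lambda_0^{1/N})^n\norm v$ with $C'=CK^{-2N}\lambda_0^{-1}$ and rate $\lambda_0^{1/N}>1$.

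The constant $L$ and the point $x^u$ are unchanged. Applying the symmetric argument to $\cW^{ss}$ then yields the equivalence in Proposition \ref{fshifffksh}.
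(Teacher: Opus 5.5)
Your argument is correct in substance. It supplies what the paper omits: the paper gives no proof of Proposition \ref{fshifffksh} beyond the remark that the SH-Saddle property does not depend on the choice of Riemannian metric.

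\textbf{What is right.}
\begin{itemize}
\item The direction $f\Rightarrow f^N$ is right: same point $x^u$, the cones at the times $lN$, and constants $L,\mu,\lambda_0^N,C$.
\item The converse construction $\cC'_{lN+j}=Df^j(\cC_l)$ is right. Invariance holds exactly inside each block, the closing step is $Df^N(\cC_l)\subset\cC_{l+1}$, and you correctly obtain $\norm{Df^nv}\geq CK^{-2N}\lambda_0^{-1}(\lambda_0^{1/N})^n\norm{v}$.
\item Your option (b) for the size issue is exactly the content of the paper's metric-independence remark. Since $\norm{Df^{\pm j}}\leq K^{j}$, the cone $Df^j(\cC_l)$ has size $\mu$ for the uniformly equivalent norm $\norm{Df^{-j}\,\cdot\,}$. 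With respect to $Df^j(E_l)\oplus Df^j(F_l)$ in the original norm, it is squeezed between the cones of sizes $\mu K^{-2N}$ and $\mu K^{2N}$. That is the uniformity the size constant is meant to encode.
\end{itemize}

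\textbf{Drop fallback (c).} Enlarging the intermediate cones to cones of a common size $\mu'$ does not work, for three reasons.
\begin{itemize}
\item $Df$ maps the size-$\mu'$ cone of $E\oplus F$ only into the size-$K^2\mu'$ cone of $Df(E)\oplus Df(F)$, so sizes would have to grow along each block. At the step $lN+N-1\to(l+1)N$ you would then need $Df^N$ to send a cone strictly larger than $\cC_l$ into $\cC_{l+1}$. The hypothesis does not provide this, since $Df^N(\cC_l)\subset\cC_{l+1}$ need not be strict; if it is an equality there is no room at all.
\item Enlarging the block-initial cones as well does not help. A linear map can send the size-$\mu$ cone of a splitting into itself without doing so for the size-$\mu'$ cone of the same splitting, for example when a projectively repelling eigendirection lies between their boundaries.
\item Your growth estimate was proved only for vectors $Df^jw$ with $w\in\cC_l$, so it says nothing about the added vectors.
\end{itemize}

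Option (a) amounts to changing the definition, so (b) is the one to keep.
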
	

Let us introduce some notation that will be useful throughout the article and will help us to get a better understanding of what the SH-Saddle property means. Let $f\in \cPH(M)$ be such that its unstable foliation has the SH-Saddle property of index $d\leq \text{dim}E^c_f$ and let $L>0$, $\mu>0$, $\lambda_0>1$ and $C>0$ be the constants given by Definition \ref{defshuu}. We can define the following subset:
\begin{equation*} \label{defH+}
	H^+_{\lambda_0,d}(f)=\{x^u\in M: \text{conditions (\ref{c1shu}) and (\ref{c2shu}) of Definition \ref{defshuu} are satisfied}\}
\end{equation*}  
Then the unstable foliation has the SH-Saddle property of index $d$ if and only if $$H^+_{\lambda_0,d}(f)\cap \cW^{uu}_f(x,L)\neq \emptyset \  \ \text{for every} \ x\in M.$$ In the same way, let $f \in \cPH(M)$ be such that its stable foliation has the SH-Saddle property of index $d$ and let $L>0$, $\mu>0$, $\lambda_0>1$ and $C>0$ be the constants given by Definition \ref{defshss}, then we can define the following subset:
\begin{equation*}\label{defH-}
	H^{-}_{\lambda_0,d}(f)=\{x^s\in M: \text{conditions (\ref{c1shs}) and (\ref{c2shs}) of Definition \ref{defshss} are satisfied}\}
\end{equation*} 
and the stable foliation has the SH-Saddle property of index $d$ if and only if  $$H^-_{\lambda_0,d}(f)\cap \cW^{ss}_f(x,L)\neq \emptyset \ \ \text{for every} \ x\in M.$$ 

\begin{rem}
	The sets $H^{*}_{\lambda_0,d}(f)$ are closed subsets of $M$, for $*=+,-$.
\end{rem} 

The following is the main property of the SH-Saddle condition: it says that having an unstable foliation with the SH-Saddle property is a $C^1$-open property among $\cPH(M)$. 

\begin{thm}[Theorem 2.7 in \cite{Pi2}] \label{shisopen}
	Suppose that the unstable foliation of $f\in \cPH(M)$ has the SH-Saddle property of index $d$. Then there are constants $\lambda>1$, $L>0$ and a  $C^1$-neighbourhood $\mathcal{V}$ of $f$ such that, if $g\in \mathcal{V}$ then
	$H^+_{\lambda,d}(g)\cap \cW^{uu}_g(x,L)\neq \emptyset$ for every $x\in M$ (i.e.: the unstable foliation $\cW^{uu}_g$ has the SH-Saddle property of index $d$ with constants $\lambda>1$ and $L>0$). 
\end{thm}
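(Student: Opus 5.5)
We need to prove Theorem 2.7 from \cite{Pi2}: the SH-Saddle property for the unstable foliation is $C^1$-open among partially hyperbolic diffeomorphisms.

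The plan is a perturbative argument with three ingredients. The cone conditions are robust under small perturbations of $Df$. The strong unstable leaves vary continuously with $f$. The set $H^+$ is closed, so we can pass to limits along orbits.

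\textbf{Step 1: replace the uniform expansion by a one-step condition for an iterate.} Fix the constants $L,\mu,\lambda_0,C$ for $f$. Choose $N$ with $C\lambda_0^N>4$. Then for every $x^u\in H^+_{\lambda_0,d}(f)$ and every $l\ge 0$, the map $Df^N_{f^l(x^u)}$ does two things:
\begin{itemize}
\item it sends $\cC^u_\mu(f^l(x^u))$ into $\cC^u_\mu(f^{l+N}(x^u))$;
\item it expands every vector of that cone by a factor at least $4$.
\end{itemize}

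Next, slightly enlarge the cones. Work with cones of size $2\mu$ around a continuous extension of the cone field to a compact neighbourhood. For this, pick a $d$-dimensional subspace $E(y)$ inside the cone at each point of the closed forward-invariant set $K=\overline{\bigcup_{l\ge0}f^l(H^+)}$. Take the cone of size $2\mu$ around it; this cone field is defined on $K$ and, by compactness, on a small neighbourhood $U$ of $K$. Transport the $E^c$ components via the continuous splitting of $g$.

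One point needs care: the cones on $K$ are only given along orbits, not continuously. So I would instead use the conditions at the level of $f^N$ with a margin. Strict cone invariance with a margin, together with expansion $\ge 4$, is preserved for maps $C^0$-close to $Df^N$ and base points close to $K$. This follows from compactness of the Grassmannian of $d$-planes and upper semicontinuity in the base point.

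\textbf{Step 2: the key robust lemma.} Show there are $\epsilon>0$ and a $C^1$-neighbourhood $\mathcal V$ with the following property. If $g\in\mathcal V$ and the $g^N$-orbit of $y$ stays $\epsilon$-close to the $f^N$-orbit of some $x^u\in H^+$, then $y\in H^+_{\lambda,d}(g)$ with $\lambda=2^{1/N}$, after using Proposition \ref{fshifffksh} to pass between $g^N$ and $g$.

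The intermediate iterates $0<j<N$ are handled by bounded distortion, which only changes the constant $C$. The $g$-invariant cones at $g^l(y)$ are obtained by transporting: at time $l$, take $Dg^l$ of the enlarged cone at $y$ and intersect it with the cone near $f^l(x^u)$. The invariance of this family follows from the margin built in Step 1.

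\textbf{Step 3: find such $y$ on every unstable leaf of length $L'$.} This is the main obstacle. A $g$-orbit generally does not shadow an $f$-orbit, since there is no hyperbolicity in the center. My plan is to use an open-closed covering argument instead.

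Define the set $A_g$ of points whose whole forward $g^N$-orbit stays in the $\epsilon$-neighbourhood $U$ of $K$ and meets the cone conditions there. By Step 2, $A_g\subset H^+(g)$. The task is then to show that $A_g$ meets every $\cW^{uu}_g(x,L')$.

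To do this, use the leaf $\cW^{uu}_g(x,L')$, which is $C^0$-close to $\cW^{uu}_f(x,L')$. Run a graph-transform argument: iterate forward a small $d$-center-disc family through the leaf, tangent to the cones. Expansion in the center cone and in $E^{uu}$, together with the fact that $f$-points of $H^+$ appear in every unstable $L$-ball, lets one find nested compact pieces $D_0\supset D_1\supset\cdots$ of the leaf. Each $g^{Nk}(D_k)$ is contained in an unstable $L'$-ball that contains a point of $f$-$H^+$ nearby.

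I expect this nesting step, which mimics the Pujals--Sambarino argument, to be the delicate part. The idea is to pick $x_k\in\cW^{uu}_f(g^{Nk}(\cdot),L)\cap H^+(f)$, pull back a neighbourhood of it, and use uniform expansion along $\cW^{uu}$ to make the pullbacks shrink. The intersection $\bigcap_k D_k$ is then nonempty, and its points lie in $A_g$.
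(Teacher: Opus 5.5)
The paper gives no proof of this statement; it is quoted from \cite{Pi2}. Your argument therefore has to stand on its own, and as written it does not: the step that carries all the weight is asserted rather than proved.

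\textbf{The cone margin does not exist.} The hypothesis gives, for each $x^u\in H^+_{\lambda_0,d}(f)$ separately, a cone family along its forward orbit. That family satisfies only the \emph{non-strict} invariance $Df(\cC^u_\mu(f^l(x^u)))\subset \cC^u_\mu(f^{l+1}(x^u))$. It has no regularity at all in the base point: two points of $H^+_{\lambda_0,d}(f)$ arbitrarily close together can carry unrelated cone families. For instance, on a center block close to $\mathrm{diag}(2,2,\tfrac12)$, cones around $e_1$ and cones around $e_2$ are both admissible, and the two futures may later favour different directions. So the margin of Step 1 cannot be built from the data. Enlarging to $\cC_{2\mu}$ does not produce an invariant family: if $Df^N$ maps $\cC_\mu$ onto itself, as a conformal block does, an arbitrarily small rotation pushes $\cC_{2\mu}$ out of itself. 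Compactness of the Grassmannian and semicontinuity have nothing to act on.

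\textbf{The cone chain breaks in Step 3.} Without a margin there is no robust one-step (Markov) cone condition on a neighbourhood of $H^+(f)$. Hence ``$A_g\subset H^+_{\lambda,d}(g)$ by Step 2'' is unjustified. Step 2 assumes the $g^N$-orbit shadows a \emph{single} $f$-orbit. Your nested construction instead jumps at each stage to a new point $x_k\in H^+(f)$. At each jump, $Dg^N$ carries the cone inherited from $x_{k-1}$ into roughly the cone at $f^N(x_{k-1})$. That cone need not lie inside the cone at $x_k$, and need not even meet it in a $d$-plane. The chain of cones, and with it the expansion estimate, breaks at every jump. The same objection applies inside Step 2, where intersecting a transported cone with a nearby one need not contain any $d$-dimensional subspace. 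When $d=\dim E^c$ (the Pujals--Sambarino setting) this problem is absent, because $y\mapsto m(Dg^N|_{E^c_g(y)})$ is continuous. For $d<\dim E^c$ it is exactly the new difficulty, and your proposal resolves it by assumption.

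\textbf{There is also a scale mismatch, independent of cones.} Fix a neighbourhood of $H^+(f)$ on which $Dg^N$ stays close to $Df^N$ at a nearby point of $H^+(f)$. Its radius $\epsilon$ must shrink with $N$, roughly like $\mathrm{Lip}(f)^{-N}$, since the two orbits separate at that rate. Yet to find the next point of $H^+(f)$ you need $g^N$ to expand a subdisc of radius $\lesssim\epsilon$ to an unstable disc of radius $L$, that is, $\lambda_{uu}^N\epsilon\gtrsim L$. In general these two requirements cannot be met together, so the nested point's orbit is uncontrolled at intermediate times.

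In the full-center case one can repair this by tolerating bounded excursions:
\begin{itemize}
\item shadow a single $H^+(f)$-orbit for a long stretch of length $m$;
\item spend a fixed number of iterates outside the neighbourhood to regrow the disc to size $L$;
\item choose the $C^1$-neighbourhood only after fixing $m$, and absorb the excursions into the constant $C$.
\end{itemize}
For $d<\dim E^c$, however, each excursion again leaves the cone in an uncontrolled position relative to the next one, which returns you to the compatibility problem above.

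A correct proof needs a genuinely new ingredient at this point: some way of choosing cones compatibly along a pseudo-orbit of points of $H^+(f)$. It cannot rely on a margin the definition does not provide.
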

	
Since the $C^1$-openness of the SH-Saddle property for stable manifolds is completely analogous, we get the following corollary.
	
\begin{cor}
	The SH-Saddle property is $C^1$-open among $\cPH(M)$. 
\end{cor}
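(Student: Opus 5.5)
The corollary follows directly from Theorem \ref{shisopen} and its stable analogue. Let $f\in\cPH(M)$ have the $(d_1,d_2)$ SH-Saddle property, as in Definition \ref{defshf}. Since $\cW^{uu}_f$ has the SH-Saddle property of index $d_2$, Theorem \ref{shisopen} gives a $C^1$-neighbourhood $\mathcal{V}^u$ of $f$ and constants $\lambda_u>1$, $L_u>0$. For every $g\in\mathcal{V}^u\cap\cPH(M)$ these satisfy $H^+_{\lambda_u,d_2}(g)\cap\cW^{uu}_g(x,L_u)\neq\emptyset$ for all $x\in M$. The argument then needs the same statement for the stable foliation. It yields a neighbourhood $\mathcal{V}^s$ and constants $\lambda_s>1$, $L_s>0$ such that $H^-_{\lambda_s,d_1}(g)\cap\cW^{ss}_g(x,L_s)\neq\emptyset$ for every $g\in\mathcal{V}^s\cap\cPH(M)$. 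Taking $\mathcal{V}=\mathcal{V}^u\cap\mathcal{V}^s$, every $g\in\mathcal{V}\cap\cPH(M)$ has the $(d_1,d_2)$ SH-Saddle property. If one wants a single constant, take $L=\max\{L_u,L_s\}$, which works because the leaf balls are nested. Since $\cPH(M)$ is itself $C^1$-open, $\mathcal{V}$ may be shrunk to lie inside it.

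The only point needing care is the stable version of Theorem \ref{shisopen}. I would obtain it by applying Theorem \ref{shisopen} to $f^{-1}$ rather than redoing its proof. The map $f^{-1}$ is partially hyperbolic with the roles of $E^{ss}$ and $E^{uu}$ exchanged, so $\cW^{uu}_{f^{-1}}=\cW^{ss}_f$, with the same leaves and the same leafwise metric. Definition \ref{defshss} for $f$ is then Definition \ref{defshuu} for $f^{-1}$, after reindexing $l\mapsto -l$ and $n\mapsto -n$. The backward $Df^{-1}$-invariant cone field along $\{f^l(x^s)\}_{l\le 0}$ becomes a forward $D(f^{-1})$-invariant cone field along the $f^{-1}$-forward orbit. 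The stable growth bound, read as $\norm{Df^{n}_{f^l(x^s)}v}\ge C\lambda_0^{|n|}\norm{v}$ for $n\le 0$, becomes the uniform expansion in condition (\ref{c2shu}) for $f^{-1}$. Hence $H^-_{\lambda_0,d}(f)=H^+_{\lambda_0,d}(f^{-1})$.

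Next, inversion $g\mapsto g^{-1}$ is a homeomorphism of $\mathrm{Diff}^1(M)$ in the $C^1$ topology. Therefore a $C^1$-neighbourhood $\mathcal{W}$ of $f^{-1}$ given by Theorem \ref{shisopen} pulls back to the $C^1$-neighbourhood $\mathcal{V}^s=\{g: g^{-1}\in\mathcal{W}\}$ of $f$. This gives exactly the required stable statement.

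The main obstacle is bookkeeping rather than a genuinely new difficulty. One must check that the reindexing really matches the stable definition to the unstable one for $f^{-1}$, in particular the sign conventions in condition (\ref{c2shs}). One must also check that the conclusion of Theorem \ref{shisopen} for $g^{-1}$ translates back to $g$ with index $d_1$ and the same leaf length. Once this is verified, the corollary is just the intersection of two open neighbourhoods.
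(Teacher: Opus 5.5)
Your proposal is correct and follows the paper's argument. The unstable half comes from Theorem~\ref{shisopen}, the stable half from its symmetric counterpart, and the two $C^1$-neighbourhoods are intersected. Where the paper only asserts that the stable case is ``completely analogous,'' you derive it formally by applying Theorem~\ref{shisopen} to $f^{-1}$, using $\cW^{ss}_f=\cW^{uu}_{f^{-1}}$, $H^-_{\lambda_0,d}(f)=H^+_{\lambda_0,d}(f^{-1})$ and the $C^1$-continuity of $g\mapsto g^{-1}$; this is a valid and clean way to make that step rigorous.
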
 

The following corollary from Theorem \ref{shisopen} will be useful in the proof of the robust transitivity criterion. First, let us say that $D$ is a center disk of dimension $d\leq \text{dim}E^c_f$ if it is a $d$-dimensional embedded disk contained in some center plaque.
	
\begin{cor}[Corollary 2.9 in \cite{Pi2}] \label{corotamanodisco}
	Let  $f\in \cPH(M)$ be such that its unstable foliation has the SH-Saddle property of index $d$ and let $\lambda>1$, $\delta_1>0$ and $\mathcal{V}$ as in Theorem \ref{shisopen}. Take $g\in \mathcal{V}$, $x^u \in H^+_{\lambda,d}(g)$ and $\cD^u$ a center disk of dimension $d$ tangent to $\cC^u(x^u)$. Then there is $N>0$ such that $g^n(\cD^u)$ contains a center disk of dimension $d$, centered at $g^n(x^u)$ of diameter bigger than $2\delta_1$ for every $n\geq N$. 
		
	Analogously with the stable foliation. 
\end{cor}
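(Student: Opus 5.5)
The plan is to obtain this as a direct consequence of the uniform cone expansion in Definition \ref{defshuu} together with the uniformity of the constants supplied by Theorem \ref{shisopen}. First fix the constants. Theorem \ref{shisopen} gives, for every $g\in\mathcal V$ and every $x^u\in H^+_{\lambda,d}(g)$, a $Dg$-invariant $d$-dimensional center cone field $\cC^u_\mu$ along the forward orbit of $x^u$ with $\norm{Dg^n v}\ge C\lambda^n\norm{v}$ for $v\in \cC^u_\mu(g^l(x^u))$. I will also use that $\mu$, $C$ and $\lambda$ can be taken uniform on $\mathcal V$; this is how the proof of Theorem 2.7 in \cite{Pi2} goes. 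Next fix a scale $r_0>0$ on which center plaques are well defined (plaque families in the sense of \cite{HPS}, which may be chosen locally invariant and varying continuously in $g\in\mathcal V$). Then take $\delta_1$ much smaller than $r_0$, so that every $d$-disk of inner radius $\le 2\delta_1$ that is tangent to a slightly enlarged cone $\cC^u_{2\mu}$ still lies inside one plaque.

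The core step is an inductive graph-transform argument. Since $\cD^u$ is tangent to $\cC^u(x^u)$ at $x^u$ and the cones vary continuously, a small subdisk $\cD_0\subset\cD^u$ around $x^u$ is tangent to the enlarged cone $\cC^u_{2\mu}$ at all of its points. I then define $\cD_{n+1}$ as the connected component of $g(\cD_n)\cap B(g^{n+1}(x^u),2\delta_1)$, taken inside the center plaque, that contains $g^{n+1}(x^u)$. By continuity of $Dg$ and of the cone field, and because the cones are strictly invariant in a neighbourhood of the orbit, the tangent spaces of $\cD_n$ stay inside the cones at every step.

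Expansion comes next. For two points of $\cD_n$, the geodesic inside the disk joining them has tangent vectors in the cone. At scale $\delta_1$ small, the estimate of item (\ref{c2shu}), combined with the continuity of $Dg$, gives $\operatorname{length}(g^k\gamma)\ge (C/2)\lambda^k\operatorname{length}(\gamma)$ for curves in the cone along the orbit. Equivalently, the inner radius of $g^n(\cD_0)$ around $g^n(x^u)$ grows like $(C/2)\lambda^n\rho_0$ until it reaches $2\delta_1$, where it is truncated. So choose $N$ with $(C/2)\lambda^N\rho_0>2\delta_1$. For $n\ge N$ the disk then has radius at least $2\delta_1$, since once truncated it stays truncated: the next image again covers the $2\delta_1$-ball by expansion.

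The main obstacle is keeping tangency to the cones under iteration for points \emph{near} the orbit of $x^u$ rather than on it. The cone invariance in Definition \ref{defshuu} holds only along the orbit, and the expansion constant $C$ may be less than one. I would try first to handle this as \cite{Pi2} does: replace the cones by slightly wider ones $\cC^u_{2\mu}$, use uniform continuity of $Dg$ on $M$ to get invariance and expansion within a $\delta_1$-neighbourhood of the orbit, and pass to an iterate $g^m$ with $C\lambda^m>2$ (Proposition \ref{fshifffksh}) to absorb $C$.

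The stable case is the same argument applied to $g^{-1}$.
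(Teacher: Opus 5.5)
The paper gives no proof of this corollary; it is quoted from \cite{Pi2}. Your sketch therefore has to stand on its own. The overall scheme is the natural one: shrink to a small subdisk, iterate, truncate at scale $2\delta_1$, and use cone expansion in blocks of length $m$. But it has a genuine gap exactly where you place the ``main obstacle''. Two of your steps rest on an invariance the hypotheses do not provide: the inductive step (the tangent planes of $\cD_n$ stay in the cones ``because the cones are strictly invariant in a neighbourhood of the orbit''), and the expansion step (the bound $\operatorname{length}(g^k\gamma)\ge (C/2)\lambda^k\operatorname{length}(\gamma)$ for \emph{all} $k$). Definition \ref{defshuu} gives a cone field only along the single orbit of $x^u$. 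Condition (\ref{c1shu}) is only the non-strict inclusion $Dg(\cC^u_\mu(g^l(x^u)))\subset\cC^u_\mu(g^{l+1}(x^u))$, with no margin. Uniform continuity of $Dg^m$ then tells you only this: at a point whose orbit segment stays $\delta_1$-close to that of $g^l(x^u)$, the derivative sends the transported cone $\cC_\mu$ into a slightly wider $\cC_{\mu+\eta}$. For the next block you must know what the derivative does to $\cC_{\mu+\eta}$, and nothing in the hypotheses controls that. So continuity handles a bounded number of iterates, not the unbounded number your induction and your length estimate need.

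Your proposed remedy (widen to $\cC_{2\mu}$, use uniform continuity, pass to $g^m$ with $C\lambda^m>2$) does not close the gap, because a widened cone need not be invariant even on the orbit itself. Take a two-dimensional center with $\cC_\mu=\{|v_2|\le\mu|v_1|\}$, $0<\mu<1$. Let $A$ have eigenvectors $(1,\mu)$ and $(1,-\mu)$ with eigenvalues $1<\alpha<\beta$. Then $A$ maps $\cC_\mu$ onto itself and expands it. However, the boundary direction of slope $\mu$ is projectively repelling, with multiplier $\beta/\alpha>1$, so $A(\cC_{\mu+\eta})\not\subset\cC_{\mu+\eta}$ for all small $\eta>0$. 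Along a sequence of maps of this type, the small errors produced by continuity are amplified geometrically. Passing to $g^m$ changes nothing, since $Dg^m$ is still only non-strictly invariant. In this example every center vector is expanded, so the statement is not contradicted; the example only shows that your mechanism cannot prove it. What is missing is a uniform margin of invariance, for instance $Dg^m(\cC_\mu(g^l(x^u)))\subset\cC_{\mu'}(g^{l+m}(x^u))$ for a fixed $\mu'<\mu$. Such a margin is available, e.g., when the $d$-cones sit around the dominating bundle of a dominated splitting $E^c=E^{c_1}\oplus E^{c_2}$ on a neighbourhood of the orbits involved. With it, your transport-and-continuity argument goes through. Without it, you need a genuinely different way to control the tangent planes of $g^n(\cD^u)$ at points off the orbit of $x^u$.
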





\subsection{SH-Saddle property for flows} \label{subsectionSHflows}
	
Recall that a flow $\varphi^t:M\to M$ generated by a vector field $X:M\to TM$ is partially hyperbolic if the tangent bundle $TM$ splits into $D\varphi$-invariant continuous subbundles $TM=E^{ss}_{\varphi}\oplus E^{c}_{\varphi}\oplus \langle X \rangle \oplus E^{uu}_{\varphi}$ such that 
	\begin{equation*}
		\norm{D\varphi^t(v^{ss})}<\lambda_{ss}^{t}<\norm{D\varphi^t(v^c)} < \lambda_{uu}^t< \norm{D\varphi^t(v^{uu})} \ \ \text{for} \ t>0
	\end{equation*}
	for some Riemannian metric $\norm{\cdot}$ and some $\lambda_{ss}<1<\lambda_{uu}$ and all unit vectors $v^{ss}\in E^{ss}_{\varphi}$, $v^c\in E^c_{\varphi}$ and $v^{uu}\in E^{uu}_{\varphi}$.

	\begin{df}[SH-Saddle property for unstable foliations of flows] Given a partially hyperbolic flow $\varphi^t:M \to M$, we say that the strong unstable foliation $\cW^{uu}_{\varphi}$ has SH-Saddle property of index $d\leq c$ if there exist $T\in \R$ such that the induced partially hyperbolic diffeomorphism $f=\varphi^T$ has strong unstable foliation $\cW^{uu}_f$ with SH-Saddle property of index $d$. Analogously, for the strong stable foliation.
	\end{df} 
	
	In addition, we have the SH-Saddle property for flows.
	
	\begin{df}[SH-Saddle property for flows] We say that a partially hyperbolic flow $\varphi^t:M\to M$ has the SH-Saddle property of index $(d_1,d_2)$ if there is $T\in \R$ such that $f=\varphi^T$ has $(d_1,d_2)$ SH-Saddle property as a partially hyperbolic diffeomorphism.
	\end{df}

	\begin{rem}
		Recall that if $\varphi^t:M\to M$ is a partially hyperbolic flow with $\textnormal{dim}E^c_\varphi=k$ then $f:=\varphi^T:M \to M$ is a partially hyperbolic diffeomorphism with $\textnormal{dim}E^c_f=k+1$.
	\end{rem}
	
	Notice that if two flows $\varphi^t,\psi^t:M\to M$ are $C^1$ close, then their time $T$ maps $f=\varphi^T$ and $g=\psi^T$ are $C^1$ close diffeomorphisms. In particular, Theorem \ref{shisopen} implies that the SH-Saddle property is $C^1$-open among partially hyperbolic flows. We summarize this observation in the following proposition.
	
	\begin{prop} \label{shfflowsisopen}
		The SH-Saddle property is $C^1$-open among partially hyperbolic flows.
	\end{prop}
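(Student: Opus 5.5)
To prove Proposition \ref{shfflowsisopen}, I reduce everything to the diffeomorphism statement, Theorem \ref{shisopen}, applied to a fixed time-$T$ map. Let $\varphi^t$ be a partially hyperbolic flow with the SH-Saddle property of index $(d_1,d_2)$. By definition there is $T$ such that $f=\varphi^T$ is a partially hyperbolic diffeomorphism with the $(d_1,d_2)$ SH-Saddle property. I may take $T>0$: if $T<0$, I replace $f$ by $f^{-1}$, which exchanges the roles of the stable and unstable foliations together with their SH-Saddle conditions. The case $T=0$ is excluded, since the identity is not partially hyperbolic.

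Next I apply Theorem \ref{shisopen} and its stable analogue (the corollary stating that the SH-Saddle property is $C^1$-open among $\cPH(M)$). This gives a $C^1$-neighbourhood $\mathcal{V}$ of $f$ in $\cPH(M)$ such that every $g\in\mathcal{V}$ has the $(d_1,d_2)$ SH-Saddle property, with the same indices.

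It remains to produce a $C^1$-neighbourhood $\mathcal{W}$ of $\varphi^t$ among partially hyperbolic flows such that $\psi^T\in\mathcal{V}$ for every $\psi^t\in\mathcal{W}$. Since $M$ is compact and $T$ is fixed, the time-$T$ map depends continuously on the generating vector field in the $C^1$ topology, by smooth dependence of solutions of ODEs on parameters together with Gronwall estimates on the variational equation. So $\psi^T$ is $C^1$-close to $\varphi^T$ whenever $Y$ is $C^1$-close to $X$.

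The one point to check, which I expect is the only real subtlety, is that $\psi^T$ is partially hyperbolic with the splitting used in the definition. That is, its splitting should be $E^{ss}_\psi\oplus(E^c_\psi\oplus\langle Y\rangle)\oplus E^{uu}_\psi$, with center dimension $k+1$, so that the indices $d_i$ still make sense relative to the same center dimension. This follows from two facts. Partial hyperbolicity of flows is $C^1$-open, and for a nearby partially hyperbolic diffeomorphism the invariant splitting is the continuation of the original one by cone-field arguments; hence it agrees with the flow's splitting for $\psi$, with the same dimensions. The invariant bundles are unique, since they are determined by the cone fields, so $E^{ss}_{\psi^T}=E^{ss}_\psi$ and $E^{uu}_{\psi^T}=E^{uu}_\psi$. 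Consequently $\psi^T$ has the $(d_1,d_2)$ SH-Saddle property, and by definition $\psi^t$ has the SH-Saddle property of index $(d_1,d_2)$, with the same $T$.
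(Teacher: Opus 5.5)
Your proposal is correct and follows the paper's argument. You take $T$ from the definition, apply Theorem \ref{shisopen} (and its stable analogue) to $f=\varphi^T$ to get a $C^1$-neighbourhood of $f$, and use $C^1$-continuity of the time-$T$ map in the vector field to obtain a neighbourhood of $\varphi^t$. Your extra checks are details the paper leaves implicit: the sign of $T$ (harmless but unnecessary, since Theorem \ref{shisopen} applies for any $T\neq 0$), and identifying the splitting of $\psi^T$ with $E^{ss}_\psi\oplus(E^c_\psi\oplus\langle Y\rangle)\oplus E^{uu}_\psi$.
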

	\begin{proof}
		Take a partially hyperbolic flow $\varphi^t:M \to M$ with the SH-Saddle property. By definition, we have that there is $T\in \R$ such that the map $f=\varphi^T$ has the SH-Saddle property as a partially hyperbolic diffeomorphism. By Theorem \ref{shisopen}, there is a $C^1$ neighbourhood $\cB$ of $f$ such that every partially hyperbolic diffeomorphism $g \in \cB$ has the SH-Saddle property. Now just take a sufficiently $C^1$ small neighbourhood $\mathcal{V}$ of $\varphi^t$ such that for every flow $\psi^t \in \mathcal{V}$, we have $\psi^T \in \cB$. As a result, every flow $\psi^t \in \mathcal{V}$ has the SH-Saddle property.  
	\end{proof}
	
	Now, since every Riemannian metric has its corresponding geodesic flow, we can translate the definition of the SH-Saddle property to partially hyperbolic Riemannian metrics. Recall that a Riemannian metric is said to be partially hyperbolic if its corresponding geodesic flow is partially hyperbolic.
	
	\begin{df}[SH-Saddle property for Riemannian metrics]
		A $C^2$ partially hyperbolic Riemannian metric has the SH-Saddle property if its induced geodesic flow has the SH-Saddle property.
	\end{df}
	
	Notice that a $C^2$ small perturbation of the Riemannian metric implies a small perturbation on the geodesic field, and therefore a $C^1$ small perturbation on the flow. Then, by a similar argument as above, we get the following proposition.
	
	\begin{prop} \label{SHopenformetrics}
		The SH-Saddle property is a $C^2$-open property among $C^2$ partially hyperbolic Riemannian metrics.
	\end{prop}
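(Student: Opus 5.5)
The plan is to reduce Proposition \ref{SHopenformetrics} to Proposition \ref{shfflowsisopen} (equivalently, to Theorem \ref{shisopen} applied to a time-$T$ map). Three points need care: the geodesic flows of different metrics live on different unit tangent bundles; a $C^2$ perturbation of the metric gives a $C^1$ perturbation of the geodesic vector field; and a partially hyperbolic metric remains partially hyperbolic under small perturbation.

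\textbf{Step 1: put all flows on one space.} Fix a $C^2$ partially hyperbolic metric $g_0$ with the SH-Saddle property, and let $\varphi^t$ be its geodesic flow on $T^1_{g_0}M$. For $g$ $C^2$-close to $g_0$, identify $T^1_gM$ with $T^1_{g_0}M$ through the radial projection $\pi_g(v)=v/\sqrt{g_0(v,v)}$. This map is a diffeomorphism, and it depends on $g$ in the $C^2$ topology, hence is $C^1$-close to the identity.

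\textbf{Step 2: conjugate and compare vector fields.} Conjugating the geodesic flow of $g$ by $\pi_g$ gives a flow $\psi^t_g$ on $T^1_{g_0}M$. Its generator is $D\pi_g(G_g)\circ\pi_g^{-1}$, where $G_g$ is the geodesic spray of $g$. The spray involves the Christoffel symbols, which are first derivatives of $g$, so $G_g$ is $C^1$-close to $G_{g_0}$ on $TM$ when $g$ is $C^2$-close to $g_0$. Combined with Step 1, $\psi^t_g$ is $C^1$-close to $\varphi^t$ as a flow, meaning its time-$T$ maps are $C^1$-close for any fixed $T$. This step is the main technical point, and it is also where one must make sure that only $C^2$ closeness of $g$ is used, not $C^3$.

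\textbf{Step 3: apply the flow result.} Partial hyperbolicity is $C^1$-open, so by shrinking the neighbourhood, $\psi^t_g$ is partially hyperbolic. Proposition \ref{shfflowsisopen}, or directly Theorem \ref{shisopen} applied to $f=\varphi^T$ (the SH-Saddle property is defined through a time-$T$ map), then shows that $\psi^T_g$ has the SH-Saddle property of the same index for every $g$ in a small enough $C^2$-neighbourhood.

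\textbf{Step 4: transfer back to the metric's own bundle.} The SH-Saddle property is invariant under $C^1$ conjugacy. A $C^1$ diffeomorphism maps strong foliations to strong foliations and distorts leaf lengths, cone sizes and expansion constants only by bounded factors, and the property does not depend on the choice of Riemannian norm. Therefore the geodesic flow of $g$ on $T^1_gM$ itself has the SH-Saddle property, which proves the proposition.
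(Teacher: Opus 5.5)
Your proposal is correct and follows the same route as the paper, which likewise reduces the statement to Proposition~\ref{shfflowsisopen} by noting that a $C^2$-small perturbation of the metric gives a $C^1$-small perturbation of the geodesic flow. Your Steps 1 and 4 (identifying $T^1_gM$ with $T^1_{g_0}M$ by radial projection, then transferring the SH-Saddle property back through that $C^1$ conjugacy) make explicit what the paper leaves implicit: it writes $T^1M$ for every unit tangent bundle and remarks that the property does not depend on the choice of Riemannian metric.
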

	\begin{proof}
		Suppose $g_0$ is a $C^{\infty}$ Riemannian metric such that its geodesic flow $\varphi^t:T^1M\to T^1M$ is partially hyperbolic and has the SH-Saddle property. By Proposition \ref{shfflowsisopen} we know there is a $C^1$ neighbourhood $\mathcal{V}$ such that every flow $\psi^t \in \mathcal{V}$ has the SH-Saddle property. Then we just have to take $\cU$ a $C^2$ neighbourhood of $g_0$ in the space of $C^{\infty}$ Riemannian metrics such that for every metric $g\in \cU$, its geodesic flow $\varphi^t \in \mathcal{V}$.  	
	\end{proof}
	


	
To finish this section, consider a $C^2$ Riemannian metric $g$ on a compact differentiable manifold $M$ of dimension $n$ without conjugate points, and let $\varphi^t:T^1M\to T^1M$ be its geodesic flow. Suppose that $\varphi^t$ is expansive and partially hyperbolic. Then the symplectic structure on $T^1M$ imposes some conditions on the dimensions of the stable and unstable bundles and stable and unstable sets. 
	
	For instance, let $W^s_{\varphi}$ and $W^u_{\varphi}$ be the stable and unstable sets, respectively, given by the expansivity condition. Then it is easy to see that:
	$$ \text{dim}W^s_{\varphi}=\text{dim}W^u_{\varphi}=n-1.
	$$
	In addition, we have a partially hyperbolic splitting given by $$T(T^1M)=E^{ss}_{\varphi}\oplus E^c_{\varphi} \oplus \langle X \rangle \oplus E^{uu}_{\varphi}.$$ Then, in the same way as above, we have
	$$ \text{dim}E^{ss}_{\varphi}=\text{dim}E^{uu}_{\varphi}=:k 
	$$
	because the map $\pi:TM \to TM$ given by $\pi(x,v)=(x,-v)$ is a diffeomorphism. Then if $\varphi^t$ has the SH-saddle property of index $(d_1,d_2)$ where $d_1=\textnormal{dim}W^s_{\varphi}-\textnormal{dim}E^{ss}_{\varphi}$ and  $d_2=\textnormal{dim}W^u_{\varphi}-\textnormal{dim}E^{uu}_{\varphi}$, we necessarily have that:
	$$ d_1=d_2=n-1-k. 
	$$
	This observation will be useful in the proof of Theorem \ref{THM A}. 
	
\section{Robust transitivity criterion}
\label{Section Robust transitivity criterion}
In this section, we are going to give sufficient conditions for a $C^{\infty}$ partially hyperbolic Riemannian metric to be $C^2$ robustly transitive.
	
\subsection{Transitivity}
Recall that a diffeomorphism $f:M \to M$ is said to be \textit{transitive} if there is $x\in M$ such that $\overline{\cO^+ (f,x)}=M$. The following proposition gives an equivalent definition that is more manageable. 
	
\begin{prop} \label{propequivtrans}
	Given a diffeomorphism $f:M \to M$ the following are equivalent:
	\begin{itemize}
		\item $f$ is topologically transitive.
		\item For every pair of open sets $U$ and $V$ there is $N\in \mathbb{N}$ such that $f^N(U)\cap V \neq \emptyset$.
		\item There is a residual set $R$ such that $\omega(x)=M$, for every $x\in M$.
	\end{itemize}
\end{prop}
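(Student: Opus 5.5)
The plan is the standard cycle of implications (i)$\Rightarrow$(ii)$\Rightarrow$(iii)$\Rightarrow$(i). It uses only that $M$ is a compact manifold, hence a second countable Baire space without isolated points, and that $f$ is a homeomorphism. I read the third item as saying that $\omega(x)=M$ for every $x\in R$.

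\emph{(i)$\Rightarrow$(ii).} Let $x$ have dense forward orbit, and let $U,V$ be nonempty open sets. Choose $m\geq 0$ with $f^m(x)\in U$. The key point is that $\cO^+(f,f^m(x))$ is still dense. Indeed, it differs from $\cO^+(f,x)$ by finitely many points, and removing finitely many points from a dense set leaves it dense because $M$ has no isolated points. Hence there is $k>0$ with $f^{m+k}(x)\in V$, so $f^k(f^m(x))\in f^k(U)\cap V$ and we may take $N=k$.

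\emph{(ii)$\Rightarrow$(iii).} Fix a countable basis $\{V_j\}_{j\in\N}$ of $M$. For $j,N\in\N$ put
$$A_{j,N}=\bigcup_{n\geq N} f^{-n}(V_j).$$
Each $A_{j,N}$ is open since $f$ is continuous. To see that it is dense, take any nonempty open $U$ and apply (ii) to the open set $f^N(U)$ and to $V_j$. This gives $m\geq 0$ with $f^{N+m}(U)\cap V_j\neq\emptyset$, that is, $U\cap A_{j,N}\neq\emptyset$. By the Baire category theorem, $R=\bigcap_{j,N}A_{j,N}$ is residual. Every $x\in R$ enters each $V_j$ at arbitrarily large times, so every point of $M$ is an accumulation point of its forward orbit, i.e.\ $\omega(x)=M$.

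\emph{(iii)$\Rightarrow$(i).} A residual set in a compact manifold is nonempty, so pick $x\in R$. Then $M=\omega(x)\subset\overline{\cO^+(f,x)}$, so $x$ has a dense forward orbit.

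The only slightly delicate points are the ``no isolated points'' argument in the first step and ensuring that the iterates used are forward iterates with $n\geq N$ in the second step. Both are handled as above, so I expect no real obstacle.
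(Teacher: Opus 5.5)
Your proof is correct. It is the standard argument, and the paper itself gives no proof of this proposition (it is quoted as a well-known fact), so there is nothing to compare it against.

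You are right to read the third item as $\omega(x)=M$ for $x\in R$. Taken literally, with $x\in M$, it would say every forward orbit is dense, which fails as soon as $f$ has a periodic point.

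Two trivial touch-ups:
\begin{itemize}
\item In (i)$\Rightarrow$(ii), to get $k>0$ you should invoke density of $\cO^+(f,f^{m+1}(x))$ rather than of $\cO^+(f,f^{m}(x))$. The same finite-removal argument gives it.
\item In (ii)$\Rightarrow$(iii), take the basis elements $V_j$ nonempty so that (ii) applies to them.
\end{itemize}
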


In the same way, we say that a flow $\varphi^t:M\to M$ is \textit{transitive} if there is a point $x\in M$ such that $\overline{\cO^+(\varphi,x)}=M$. 
	
    \begin{rem} \label{remarktransflows}
		Given a flow $\varphi^t:M\to M$, if there is $T\in \R^+$ such that $\varphi^T$ is transitive as a diffeomorphism, then the flow $\varphi^t$ is transitive. This is clear since:
	$$ M = \overline{\bigcup_{n\in \mathbb{N}}(\varphi^T)^n(x)}=\overline{\bigcup_{n\in \mathbb{N}}\varphi^{nT}(x)}\subseteq \overline{\bigcup_{t\in \R^+}\varphi^t(x)} \subseteq M.
	$$
    The opposite direction is not true; a simple counterexample is the linear irrational flow in the torus, where every orbit is dense, but the orbits of the time 1 map leave invariant some transversal sections.
	\end{rem}
  
Finally, we say that a Riemannian metric is transitive if its corresponding geodesic flow is transitive.

\subsection{A criterion for openness} \label{scopeness}
	
In this subsection, we are going to state a result that we are going to apply in the main theorem. Roughly speaking, it says that given a continuous map between topological spaces of the same dimension, and such that the fibers (preimages of points) of the map are small enough, then the image contains an open set. The version we are going to use comes from \cite{LZ}, which is a quantitative version of a result in \cite{BK}. We begin with a few definitions.

\begin{df}
		Suppose $f:X\to Y$ is a continuous map between metric spaces. We say that $y\in Y$ is a \textit{stable value} if there is $\epsilon>0$ such that if $d_{C^0}(f,g)<\epsilon$ then $y\in Im(g)$.
\end{df}
	
\begin{rem} \label{remcontieneabierto}
	Let $Y=\mathbb{R}^n$ and suppose that $f:X \to \mathbb{R}^n$ has a stable value $y$, then $Im(f)$ contains an open set. To see this, take $\epsilon>0$ from the definition of stable value, and a vector $v\in \mathbb{R}^d$ with $\norm{v}<\epsilon$. The map $g:X\to \mathbb{R}^d$ defined by $g(x)=f(x)-v$ satisfies $d_{C^0}(f,g)\leq \norm{v}<\epsilon$. Since $y$ is a stable value, there is $x\in X$ such that $g(x)=y$, and this is equivalent to $f(x)=y+v$. Since $v$ was arbitrary we get $B_{\mathbb{R}^n}(y,\epsilon)\subset Im(f)$.
\end{rem}
	
\begin{df} \label{dflight}
	Given a continuous function $f:X\to Y$ and $\rho>0$ we say that $f$ is $\rho$-light if for every $y \in Y$ the connected components of $f^{-1}(y)$ have diameter smaller than $\rho$.
\end{df}
	
\begin{prop}[Theorem F in \cite{LZ}] \label{propLZadapted} Given $d\in \mathbb{N}$ and $r>0$ there is $\rho=\rho(d,r)>0$ such that every $\rho$-light map $f:[-r,r]^d\to \mathbb{R}^d$ has a stable value. 
\end{prop}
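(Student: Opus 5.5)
The plan is to argue by contradiction with a quantitative Lebesgue covering argument, and to take $\rho=c(d)\,r$ for a dimensional constant $c(d)>0$ fixed below. Suppose $f:[-r,r]^d\to\mathbb{R}^d$ is continuous and has no stable value. The goal is to produce a connected subset of a single fiber $f^{-1}(y)$ of diameter at least $c(d)r$, so that $f$ is not $\rho$-light.

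\textbf{Step 1: push $f$ into a codimension-one skeleton.} Fix $\epsilon>0$ and a cubulation of $\mathbb{R}^d$ by cubes of side $\epsilon$. Only the finitely many cubes meeting the compact set $f([-r,r]^d)$ matter, and let $p_1,\dots,p_N$ be their centres. I want a map $g$ with $d_{C^0}(f,g)<C\epsilon$ whose image avoids every $p_i$. Once such a $g$ exists, radial projection from $p_i$ inside each cube $Q_i$ moves points by less than $\sqrt d\,\epsilon$ and sends $g$ into the $(d-1)$-skeleton $K^{d-1}$ of the cubulation.

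To construct $g$, I would use the standard cohomotopy characterisation of stable values (Hopf's theorem, since the domain has dimension $d$). A point $y$ is a stable value of $f$ if and only if, for small balls $B\ni y$, the map
\[
f:(f^{-1}(\overline B),\,f^{-1}(\partial B))\to(\overline B,\partial B)
\]
is essential, i.e.\ the pulled-back generator of $\check H^d(\overline B,\partial B)$ is nonzero. This condition is local. Because $f$ has no stable value, each such class vanishes. So on each cube $Q_i$ the restriction can be homotoped rel $f^{-1}(\partial Q_i)$, without leaving $Q_i$, to a map missing $p_i$. Since the cubes have disjoint interiors, these local modifications can be done independently and glued. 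This gives $g$ at distance at most $\sqrt d\,\epsilon$ from $f$ with $g([-r,r]^d)\subset K^{d-1}$.

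\textbf{Step 2: a map into a $(d-1)$-complex has a large fiber component.} Let $g:[-r,r]^d\to K^{d-1}$ and suppose every component of every fiber $g^{-1}(z)$ has diameter less than $c r$. Then I claim compactness gives a $\delta>0$ such that the components of preimages of open stars of a fine subdivision of $K^{d-1}$ still have diameter less than $c r$. These components form an open cover of the cube of order at most $d$ (the nerve of the stars is $(d-1)$-dimensional) and with mesh less than $c r$. For $c=c(d)$ small, say $c<1$, no element of the cover meets two opposite faces of $[-r,r]^d$. This contradicts the Lebesgue covering theorem, which forces order at least $d+1$. Hence some fiber of $g$, with value $z_\epsilon$ say, has a connected component $C_\epsilon$ of diameter at least $c r$, and $f(C_\epsilon)\subset B(z_\epsilon,\sqrt d\,\epsilon)$.

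\textbf{Step 3: pass to the limit.} Take $\epsilon_n\to0$. By compactness, pass to a subsequence so that $z_{\epsilon_n}\to y$ and, in the Hausdorff metric, $\overline{C_{\epsilon_n}}\to C$. A Hausdorff limit of continua is a continuum, and the diameter is upper semicontinuous in the right direction, so $C$ is connected with $\operatorname{diam}C\ge c r$. By continuity of $f$, $f(C)=\{y\}$. Thus $f^{-1}(y)$ has a component of diameter at least $c r=\rho$, so $f$ is not $\rho$-light. Contrapositively, every $\rho$-light $f$ has a stable value.

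\textbf{Main obstacle.} The delicate point is Step 1: turning ``no single point is a stable value'' into one nearby map that avoids a whole grid of points at once. Avoiding a point is an open condition, but non-stability need not obviously persist under perturbation. That is why I would route it through the local cohomological (Hopf) criterion, which allows independent modifications cube by cube, rather than through sequential perturbations.
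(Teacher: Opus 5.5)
Your proof is correct. It does not follow the paper, because the paper proves nothing here: it quotes Theorem F of \cite{LZ} (stated on $[0,1]^d$, a quantitative version of a result in \cite{BK}) and only asserts that the argument adapts to $[-r,r]^d$. That adaptation is pure rescaling. Precomposing with the affine homeomorphism $[0,1]^d\to[-r,r]^d$ preserves stable values and multiplies the diameters of fiber components by $2r$, so $\rho(d,r)=2r\,\rho(d,1)$.

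You instead reprove the cited result from classical dimension theory. A map with no stable value is $C^0$-approximable by maps into the $(d-1)$-skeleton. Such maps have a fiber component crossing the cube, by Lebesgue's covering theorem. A Hausdorff limit then transfers this to $f$. What this buys is a self-contained argument with an explicit constant. Your Step 2 only needs the cover elements to have diameter $<2r$, so $\rho=2r$ is admissible. This is sharp: $(x_1,\dots,x_d)\mapsto(x_1,\dots,x_{d-1},0)$ is $\rho$-light for every $\rho>2r$ and has no stable value.

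Two remarks on Step 1.

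First, you state the Hopf criterion only for small balls. You should say why the class also vanishes for the cube $Q_i$ of side $\epsilon$; it does, by excision or by the argument below.

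Second, the \v{C}ech cohomology and Hopf's theorem can be avoided entirely.
\begin{itemize}
\item If $p_i$ is an unstable value, pick $\tilde g$ missing $p_i$ with $|\tilde g-f|<\eta/2$. Set $h=\lambda\tilde g+(1-\lambda)f$, where $\lambda=\lambda(|f-p_i|)$ equals $1$ on $[0,\eta/2]$ and $0$ on $[\eta,\infty)$.
\item On the transition region $|\tilde g-f|<|f-p_i|$, so $h$ misses $p_i$. Moreover $h$ agrees with $f$ off $f^{-1}(B(p_i,\eta))$ and takes values in $B(p_i,3\eta/2)$ on that set.
\item For $\eta<\epsilon/3$, radial projection from $p_i$ composed with $h$ gives an extension $f^{-1}(Q_i)\to\partial Q_i$ of $f|_{f^{-1}(\partial Q_i)}$.
\item These extensions agree with $f$ on the preimages of shared faces, so they glue directly to a map $g$ into the skeleton with $d_{C^0}(f,g)\le\sqrt d\,\epsilon$.
\end{itemize}
Each extension is built from $f$ itself, using only that $p_i$ is unstable for $f$. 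So the persistence issue you flag as the main obstacle never arises, in your version or in this one.
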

	
The version stated in \cite{LZ} is for maps $f:[0,1]^d \to \mathbb{R}^d$, but the proof can be adapted to maps $f:[-r,r]^d\to \mathbb{R}^d$ for a fixed $r>0$. Combining this proposition and Remark \ref{remcontieneabierto}, we have the following corollary.
	
\begin{cor} \label{corcontieneabierto}
	Fix $d\in \mathbb{N}$ and $r>0$, and take the corresponding $\rho=\rho(d,r)>0$ from Proposition \ref{propLZadapted}. Then there exists $\varepsilon_0>0$ such that the image of every $\rho$-light map $f:[-r,r]^d\to \mathbb{R}^d$ contains an open disk of diameter at least $\varepsilon_0$. 
\end{cor}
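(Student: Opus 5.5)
The plan is to combine Proposition \ref{propLZadapted} with Remark \ref{remcontieneabierto}, taking care that the radius produced does not depend on the particular map $f$. Fix $d$, $r$ and let $\rho=\rho(d,r)$ be as in Proposition \ref{propLZadapted}. Given a $\rho$-light map $f:[-r,r]^d\to\mathbb{R}^d$, the proposition provides a stable value $y$ of $f$. By definition there is some $\epsilon_f>0$ such that every continuous $g$ with $d_{C^0}(f,g)<\epsilon_f$ has $y\in Im(g)$. The argument of Remark \ref{remcontieneabierto} then applies verbatim. For each $v$ with $\norm{v}<\epsilon_f$, the translate $g=f-v$ is $\epsilon_f$-close to $f$, hence hits $y$, so $y+v\in Im(f)$. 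This gives $B_{\mathbb{R}^d}(y,\epsilon_f)\subset Im(f)$, an open disk of diameter $2\epsilon_f$.

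The remaining task is to show that $\epsilon_f$ can be bounded below by a constant $\varepsilon_0/2$ depending only on $(d,r)$. For this I would go back into the proof of Theorem F in \cite{LZ}, which is the quantitative version of the stable-value theorem of \cite{BK}, and read off an explicit stability radius. The mechanism there, as I understand it, is the following. One triangulates $[-r,r]^d$ at a scale comparable to $\rho$. Lightness at scale $\rho$ forces the restriction of $f$ to the boundary of a suitable cell to avoid a point $y$ at a definite distance. It also forces the induced map to have nonzero degree in $\mathbb{R}^d\setminus\{y\}$. Any perturbation smaller than $\mathrm{dist}(y,f(\partial\,\text{cell}))$ preserves that degree, and nonzero degree forces $y\in Im(g)$. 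So the stability radius is exactly this boundary distance. The key step is to bound it below in terms of $d$, $r$, $\rho$ alone, adapting the scale from $[0,1]^d$ to $[-r,r]^d$ as the paper indicates. Setting $\varepsilon_0$ to be twice that bound gives the corollary.

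I expect this uniform lower bound on $\mathrm{dist}(y,f(\partial\,\text{cell}))$ to be the main obstacle. Lightness alone does not control the size of $f$: the injective map $x\mapsto \delta x$ is $\rho$-light for every $\delta>0$, yet its image has diameter of order $\delta r$. So the quantitative bound in \cite{LZ} must use some normalization of $f$. Examples would be a lower bound on the diameter of images of cells, or closeness of $f$ to a fixed embedding. Any such normalization has to be made explicit and carried along in the statement.

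In the application to Theorem \ref{criterion of robust transitivity}, the maps in question are compositions of $h_\psi$ with charts. They are uniformly $C^0$-close to maps with controlled geometry, since $d_{C^0}(h_\psi,\textit{Id})$ is small and the center disks have size at least $2\delta_1$ by Corollary \ref{corotamanodisco}. So the normalization I would build into the hypotheses is the following: the maps are $C^0$-close, uniformly in $(d,r)$, to a fixed bi-Lipschitz family. Under that normalization $\varepsilon_0$ is uniform.
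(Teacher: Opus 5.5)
Your first paragraph is the paper's whole argument: the corollary is justified only by ``combining this proposition and Remark~\ref{remcontieneabierto}''. That yields a ball $B_{\mathbb{R}^d}(y,\epsilon_f)\subset f([-r,r]^d)$ whose radius is the stability radius of the particular map $f$, and nothing bounds $\epsilon_f$ from below. Your objection is correct: the corollary is \emph{false} as stated, and your example is a complete disproof. The map $f_\delta(x)=\delta x$ is injective, hence $\rho$-light for every $\rho>0$, but $f_\delta([-r,r]^d)=[-\delta r,\delta r]^d$ contains no open disk of diameter larger than $2\delta r$. Taking $\delta<\varepsilon_0/(2r)$ defeats any candidate $\varepsilon_0$. (Here $0$ is a stable value of $f_\delta$, consistent with Proposition~\ref{propLZadapted}, but its stability radius is only $\delta r$. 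Composing $f_\delta$ with a chart shows that Proposition~\ref{contopenmanifolds} fails for the same reason.)

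So state this conclusion outright, rather than treating the uniform bound as the ``main obstacle'' to be read off from \cite{LZ}. No reading of the proof of Theorem F can bound $\operatorname{dist}(y,f(\partial\,\mathrm{cell}))$ below in terms of $d$, $r$, $\rho$ alone. That distance is positive for each fixed $f$ by compactness, but for $f_\delta$ it is at most of order $\delta r$. Your sentence that lightness ``forces'' the boundary image to avoid $y$ at a definite distance is therefore wrong in the uniform sense you need. What survives is the non-uniform statement: the image contains an open disk whose size depends on $f$. This is not cosmetic. The proof of Theorem~\ref{criterion of robust transitivity} chooses $\mathcal{V}_2$ with $d_{C^0}(h_\psi,\mathrm{Id})<\frac18\min\{\rho,\varepsilon_0\}$, so it needs $\varepsilon_0$ independent of $\psi$ and $n$.

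Your proposed normalization does restore uniformity, but once it is imposed, lightness and Proposition~\ref{propLZadapted} are no longer needed. A degree argument suffices:
\begin{enumerate}
\item Let $\phi:[-r,r]^d\to\mathbb{R}^d$ be injective with $m:=\operatorname{dist}(\phi(0),\phi(\partial[-r,r]^d))>0$, and let $\norm{f-\phi}_{C^0}\le\eta<m$.
\item Since $\phi$ is injective, $\deg(\phi,(-r,r)^d,\phi(0))=\pm1$.
\item For every $z\in B(\phi(0),m-\eta)$, the straight-line homotopy from $\phi$ to $f$ avoids $z$ on the boundary. Hence $\deg(f,(-r,r)^d,z)=\pm1$, and $B(\phi(0),m-\eta)\subset f([-r,r]^d)$.
\end{enumerate}
A bi-Lipschitz constant $L$ gives $m\ge r/L$. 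That is more than needed, and probably more than is available in the application, where the projections $\Pi^u_{\xi}$ run along the merely continuous sets $W^{cs}_{\varphi}$. What actually has to be checked there, before $\mathcal{V}_2$ is chosen, is the following. The maps $\Pi^u_{\xi^u_n}\circ p$ must be injective on $\Delta^u_n$. There must also be a positive lower bound, uniform in $n$ and $\psi$, on the distance from the image of the center of a $\delta_1$-disk in $\Delta^u_n$ to the image of that disk's boundary.
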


Since differentiable manifolds are locally Euclidean spaces, we get an analogous result for maps defined on manifolds. Let us recall that given $M$ a differentiable manifold of dimension $m$, we say that $N$ is a parametrized submanifold of dimension $d$ if there is $U$ an open subset of $\R^d$ and a $C^1$ immersion $\a:U \to M$ such that $\a (U)=N$. 

\begin{prop} \label{contopenmanifolds}
Let $N \subset M$ be a parametrized submanifold of dimension $d$, such that its inner radius is larger than $r>0$. Then, there is $\rho=\rho(d,r)$ and $\varepsilon_0>0$ such that the image of every $\rho$-light map $f:N \to \R^d$ contains an open disk of diameter at least $\varepsilon_0$.  
\end{prop}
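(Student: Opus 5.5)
Proposition \ref{contopenmanifolds} follows from Corollary \ref{corcontieneabierto} by localizing to a single Euclidean chart of $N$. The only work is to choose the chart so that the lightness constant transfers.

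Since the inner radius of $N$ exceeds $r$, there is a point $p=\a(u_0)\in N$ such that the intrinsic ball $B_N(p,r)$ is embedded. By compactness of $M$ and the $C^1$ bounds on $\a$, we can choose a cube $Q=u_0+[-r',r']^d\subset U$ with the following properties: $r'$ depends only on $d$, $r$ and a uniform bi-Lipschitz constant $K$ of $\a$ on $Q$, and $\a|_Q$ is an embedding onto a subset of $B_N(p,r)$. For instance $r'=r/(K\sqrt d)$ works. Since $\a$ is an immersion, shrinking $Q$ if necessary makes $\a|_Q$ $K$-bi-Lipschitz with respect to the intrinsic distance of $N$.

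Now let $f:N\to\R^d$ be $\rho$-light and set $F=f\circ\a\circ\tau:[-r',r']^d\to\R^d$, where $\tau$ is the translation taking $0$ to $u_0$. Fix $y\in\R^d$. Each connected component $C$ of $F^{-1}(y)$ maps under $\a\circ\tau$ to a connected subset of $f^{-1}(y)$, so $\a\tau(C)$ lies in one component of $f^{-1}(y)$ and therefore has diameter less than $\rho$. By the bi-Lipschitz bound, $\operatorname{diam}C< K\rho$, so $F$ is $K\rho$-light.

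Let $\rho'=\rho(d,r')$ be the constant from Proposition \ref{propLZadapted}, and let $\varepsilon_0$ be the constant from Corollary \ref{corcontieneabierto} for $(d,r')$. Define $\rho=\rho'/K$. Then $F$ is $\rho'$-light, so by the corollary $F([-r',r']^d)$ contains an open disk of diameter at least $\varepsilon_0$. Since $F([-r',r']^d)\subset f(N)$, this gives the claim.

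The main obstacle is making the constants genuinely uniform. The claim depends only on $d$ and $r$, yet $K$ depends on the immersion, and distances can be intrinsic or ambient. I will read the inner radius condition as saying that $N$ contains an embedded intrinsic ball of radius $r$. I will take \emph{diameter} in the lightness condition with respect to the intrinsic metric of $N$, and use normal coordinates at $p$ as the parametrization. With that reading, $K$ can be taken close to $1$ after shrinking $r$ by a fixed factor, using compactness of $M$ and the $C^2$ geometry of the disks arising later. Otherwise I will simply allow $\rho$ to depend on the $C^1$ bounds of $\a$, which is harmless in the intended application.
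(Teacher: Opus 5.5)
Your reduction is the one the paper intends. The paper gives no argument beyond remarking that manifolds are locally Euclidean, so the statement should follow from Corollary~\ref{corcontieneabierto}. Pulling lightness back through a chart with a lower Lipschitz bound is the right mechanism for that.

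The step that fails is the last one, and the failure has nothing to do with charts: no uniform $\varepsilon_0$ can exist. Lightness only constrains fibers, and it is invariant under dilations of the target, since $(\eta f)^{-1}(y)=f^{-1}(y/\eta)$. So if $f$ is $\rho$-light, so is $\eta f$ for every $\eta>0$. Take $f$ injective with bounded image, for example the identity of $[-r,r]^d$, or $\alpha^{-1}$ when $\alpha$ embeds a bounded $U$. Then every $\eta f$ is $\rho$-light for all $\rho$, yet its image lies in a ball of radius $O(\eta)$. Thus Corollary~\ref{corcontieneabierto}, which you use as a black box, is false as stated, and so is the proposition with a uniform $\varepsilon_0$. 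The paper's one-line justification has the same defect. What your argument does yield, via Proposition~\ref{propLZadapted} and Remark~\ref{remcontieneabierto}, is that $F$ has a stable value. Hence $f(N)$ contains an open disk, but its size is governed by the stability constant of that particular $f$, not by $d$ and $r$.

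Separately, your $\rho$ is not $\rho(d,r)$, because $K$ and $r'$ depend on $\alpha$:
\begin{itemize}
\item Choosing $r'=r/(K\sqrt d)$ and then shrinking $Q$ until $\alpha|_Q$ is $K$-bi-Lipschitz is circular.
\item Compactness of $M$ gives no bound on $D\alpha$ or its inverse.
\item Normal coordinates are nearly isometric only below a scale set by the curvature of $N$, which a $C^1$ immersion does not control.
\end{itemize}
Your fallback, letting $\rho$ depend on the $C^1$ data of $\alpha$, proves a different statement.

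A uniform conclusion needs an input beyond lightness. For instance, suppose $f$ is $C^0$-close, within some $\eta_0$, to a fixed map $f_0$ that is a homeomorphism of a closed disk onto a set containing a ball $B(y_0,R)$. A degree argument then puts $B(y_0,R-\eta_0)$ inside the image of $f$. In the proof of Theorem~\ref{teoTransitividadeRobusta}, this is the kind of information one would have to extract from the smallness of $d_{C^0}(h_\psi,\mathrm{Id})$, rather than from lightness.
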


\subsection{Proof of the transitivity criterion} \label{ssrtcf}

Before getting into the proof of Theorem \ref{criterion of robust transitivity} itself, let us prove a few lemmas that we are going to use. The first one shows that we can mix open sets inside the stable and unstable sets by using the transitivity; this is going to be applied to the initial geodesic flow. The second one shows that we can recover the intersection if the semiconjugacy was applied to open sets. 

\begin{rem}
    For the next lemma, we are also going to denote by $W^s$ and $W^u$ the stable and unstable sets inside the local product structure. One should have in mind that these sets do coincide in general, but for the sake of clearness of the exposition and since the argument is local, we want to avoid taking local projections along the flow lines several times and exchanging from one notation to the other.
\end{rem}

\begin{lem}
\label{mix open in right angle}
Let \((M,g_0)\) be a Riemannian manifold without conjugate points such that its geodesic flow \(\varphi^t\) on \(T^1M\) is expansive. Given any two open subsets \(V \subset W^s(\theta)\) and \(U \subset W^u(\eta)\), there exists \(T > 0\) such that 
\[
\varphi^T(U) \cap V \neq \emptyset.
\]
\end{lem}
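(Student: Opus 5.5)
The plan is to combine the topological transitivity of $\varphi^t$ (Theorem \ref{teoLPSruggiero}(3)) with the local product structure (Theorem \ref{teoLPSruggiero}(1)). The idea is to thicken $U$ and $V$ into genuine open subsets of $T^1M$ built from local product boxes, use transitivity to make these boxes meet, and then use the dynamics along stable and unstable sets to push the intersection back onto $U$ and $V$ themselves.

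\textbf{Step 1: thickening.} Pick $u_0\in U$ and $v_0\in V$. By the local product structure and Remark \ref{local unstable}, I will choose $\epsilon>0$ small enough that the local stable sets $W^s_\epsilon(z)$ through points $z\in U$ near $u_0$ form a family varying continuously in $z$. Set
\[
\hat U=\bigcup_{|t|<\epsilon}\varphi^t\Bigl(\bigcup_{z\in U'} W^s_\epsilon(z)\Bigr),
\]
where $U'\subset U$ is a small open neighbourhood of $u_0$ in $W^u(\eta)$. Since $U'$ is an open subset of an $(n-1)$-dimensional unstable leaf, and we add $(n-1)$ stable dimensions and one flow dimension in $T^1M$ of dimension $2n-1$, the homeomorphism $F$ of the local product structure (with invariance of domain) shows that $\hat U$ is open. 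In the same way I build $\hat V$ from a neighbourhood $V'\subset V$ of $v_0$ by saturating with local unstable sets and a short flow segment.

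\textbf{Step 2: mixing.} By transitivity of $\varphi^t$ there is $T_0$ with $\varphi^{T_0}(\hat U)\cap \hat V\neq\emptyset$. I also need $T_0$ large, so I will use the version of transitivity in which the return times are unbounded. This holds because a dense forward orbit enters $\hat U$ and later enters $\hat V$ at arbitrarily large times, and this is available since there are no isolated points. Take a point $p$ in this intersection, with $p=\varphi^{T_0}(\varphi^{s}(w))$ for some $w\in W^s_\epsilon(z)$, $z\in U'$, $|s|<\epsilon$.

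\textbf{Step 3: pulling back to $U$ and $V$.} On the $U$ side, $\varphi^{T_0+s}(w)$ and $\varphi^{T_0+s}(z)$ are exponentially close when $T_0$ is large; more precisely, they are close by the definition of $W^s$, uniformly on compact pieces by continuity of the foliations. Hence $\varphi^{T_0+s}(z)$, a point of $\varphi^{T_0+s}(U)$, lies near $p$. On the $V$ side, $p$ lies on a local unstable set of a point of $V'$, up to a flow shift. I will then redo the argument with a local product box around $p$. Its unstable plaque is near $\varphi^{T_0+s}(U')$, which has been stretched along $W^u$ and stays close to it, and its stable plaque is near $V'$. The bracket $[\varphi^{T_0+s}(z),q]$, with $q\in V'$, lies in $W^s(q)\cap W^{cu}(\varphi^{T_0+s}(z))$. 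Strictly this produces a point of $\varphi^{T}(U)$ lying on $W^{cs}$ of a point of $V$, not in $V$ itself. So the cleaner route is to reverse roles: also flow $V$ backward, which shrinks its unstable directions, and use the bracket to land exactly in $V$. The local product structure gives that $W^{cu}$ of points in $\varphi^{T}(U)$, whose unstable plaques are large for large $T$, intersects $V$ in a point of $W^u$ of a point of $\varphi^{T}(U)$. Since $\varphi^T(U)\subset W^u(\varphi^T\eta)$ is itself a union of unstable plaques, this intersection point lies in $\varphi^{T}(U)$ up to a flow shift $|\tau|<\epsilon$, which is absorbed by replacing $T$ with $T+\tau$.

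\textbf{Main obstacle.} The hardest step is Step 3. It needs to show that $\varphi^T(U)$ contains a whole unstable plaque of uniform size near $p$, that is, that unstable sets genuinely expand for this merely expansive, non-uniformly hyperbolic flow. I would first try to derive this from expansivity and compactness via the standard argument that local unstable sets of uniform size $\epsilon$ expand uniformly in the topological sense: for each $\epsilon$ there is $T_1$ such that $\varphi^{T_1}(W^u_\epsilon(z))\supset W^u_{\epsilon}(\varphi^{T_1}z)$ for all $z$. This should follow from uniform expansivity on the continuous foliation, as in Ruggiero's work \cite{Rug1}.
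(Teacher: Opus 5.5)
Your overall architecture is the paper's. You thicken $U$ by local stable sets and a flow segment, use transitivity at large times, use asymptoticity along $W^s$ to replace the meeting point by $\varphi^{T}(z)$ with $z\in U$, and then use the local product structure to reach $V$. Taking unbounded return times is a fine substitute for the paper's split into bounded and unbounded $T_n$.

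The gap is the final step: you never show that the point you produce lies in $\varphi^T(U)$. The bracket $[\varphi^{T_0+s}(z),q]$ with $q\in V'$ lies in $W^s(q)=W^s(\theta)$, hence in $V$ once $q$ has some margin from $\partial V$. So the $V$ side was never the problem, and the detour of flowing $V$ backward is unnecessary. What is missing is that this point, which lies on $W^u(\varphi^{T_0+s+\tau}(z))$, belongs to $\varphi^{T_0+s+\tau}(U)$. Your justification that $\varphi^T(U)$ ``is itself a union of unstable plaques'' does not give this. $\varphi^T(U)$ is only an open piece of the leaf $W^u(\varphi^T\eta)$, and lying on the leaf through one of its points does not put you inside it. You correctly identify what is needed, namely a plaque of definite size around $\varphi^T(z)$ inside $\varphi^T(U)$, but you defer it to an unproved uniform-expansion lemma. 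Moreover, Step 1 takes $U'$ merely ``small'', with no margin from $\partial U$ tied to $\epsilon$, and any such statement needs that margin.

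No expansion is needed. With local unstable sets as in Remark~\ref{local unstable}, $W^u_\epsilon(x)=\{w\in W^u(x):\sup_{t\le0}d(\varphi^tx,\varphi^tw)\le\epsilon\}$, the definition gives directly $\varphi^{-T}\bigl(W^u_\epsilon(\varphi^Tz)\bigr)\subset W^u_\epsilon(z)$ for every $T\ge0$. So the inclusion you hoped to extract from expansivity holds tautologically, for all $T\ge 0$.

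This is exactly the paper's last step:
\begin{itemize}
\item it chooses $D^u$ with $d(D^u,\partial U)\ge 3\epsilon$;
\item it takes $y_n\in D^u\cap W^s_{\epsilon/2}(x_n)$, so that $\varphi^{T_n}(y_n)\to z\in\overline{D^s}$;
\item it uses continuity of local unstable sets to pick $z_n\in W^u_{\epsilon/2}(\varphi^{T_n}y_n)\cap D^s$;
\item it pulls back: $\varphi^{-T_n}(z_n)\in W^u(y_n)\cap B_{\epsilon/2}(y_n)\subset U$.
\end{itemize}

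To repair your argument, build the same margins into Step 1: $z$ and $q$ at distance at least $3\epsilon$ from $\partial U$ and $\partial V$. Then take the bracket at a scale where the product-structure plaques sit inside $W^u_{\epsilon}$, absorb the flow shift $\tau$ into $T$ as you already do, and replace the expansion lemma with this pull-back inclusion.
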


\begin{proof}
Fix $\epsilon>0$ small enough that we can choose small open disks $D^s \subset V \subset W^s(\theta)$ and $D^u \subset U \subset W^u(\eta)$ such that $d(D^u, \partial U)\ge 3\epsilon$ and $d(D^s, \partial V)\ge 3\epsilon$. These guarantee not only that $\overline{D^s}\subset V$ and $\overline{D^u}\subset U$, but also that $\varepsilon$-balls centered in points of these closures are still subsets of $V$ and $U$.

Since $D^s$ is open in $W^s(\theta)$, there exists a decreasing sequence of open sets $\mathcal{V}_n$ inside the local product structure such that $\bigcap_n \mathcal{V}_n = D^s$ and $\mathcal{V}_n \cap W^s(\theta) = D^s$ for all $n$. 
Let $\delta_n>0$ with $\delta_n\to 0$. Then the set $\widetilde{\mathcal{V}}_n :=\displaystyle\bigcup_{|t|<\delta_n} \varphi^t(\mathcal{V}_n)$ is open in $T^1M$ and $\bigcap_n \widetilde{\mathcal{V}}_n = D^s$.

Analogously, since $D^u$ is open in $W^u(\eta)$, there exists a small open set $\mathcal{U}$ in the local product structure whose stable diameter is smaller than $\frac{\epsilon}{2}$. For $\delta>0$ small, the set $\widetilde{\mathcal{U}} :=\displaystyle\bigcup_{|t|<\delta} \varphi^t(\mathcal{U})$ is open in $T^1M$.

It is worth noting that if there exists $\widetilde{T}>0$ large such that $\varphi^{\widetilde{T}}(\widetilde{\mathcal{U}})\cap \widetilde{\mathcal{V}}_n\neq \emptyset$, then there exists $T>0$ such that $\varphi^{T}(\mathcal{U})\cap \mathcal{V}_n\neq \emptyset$. Therefore, by topological transitivity (Theorem \ref{teoLPSruggiero}), we can assume that for each $n$ there exist $T_n > 0$ and $x_n \in \mathcal{U}$ such that $\varphi^{T_n}(x_n) \in \mathcal{V}_n$. 

Since $\mathcal{V}_n \searrow D^s$, we can assume that $\varphi^{T_n}(x_n)\to z\in \overline{D^s}$.

If $\{T_n\}$ were bounded, then, passing to subsequences if necessary, we can assume $T_n \to T < \infty$ and $x_n \to x \in \mathcal{U}$, and by continuity $\varphi^T(x) = z$. However, by the definition of $\mathcal{U}$ there exists $y\in D^u$ such that $y\in W^s_{\epsilon/2}(x)$. Thus, $\varphi^T(y)\in \varphi^T(W^s_{\epsilon/2}(x))\subset W^s_{\epsilon/2}(\varphi^T(x))=W^s_{\epsilon/2}(z)\subset V$, and the desired intersection is obtained.

Now we deal with the case when the sequence $(T_n)_n$ is not bounded. With no loss of generality (we could consider a subsequence), let us assume that $T_n \to \infty$. From the structure of $\mathcal{U}$, for each $n$ we can find $y_n\in W^s_{\varepsilon/2}(x_n)\cap D^u$. By the triangular inequality, we get 
$$d(\varphi^{T_{n}}(y_n),z)\leq d(\varphi^{T_{n}}(y_n), \varphi^{T_{n}}(x_n))+d(\varphi^{T_{n}}(x_n), z).$$
Since $T_n\rightarrow \infty$, it follows that $\varphi^{T_{n}}(y_n)\to z\in \overline{D^s}$.

Remark \ref{local unstable} implies that for $n$ large enough, $W^u_{\epsilon/2}(\varphi^{T_{n}}(y_n))$ is close to $W^u_{\epsilon/2}(z)$. Consequently, $W^u_{\epsilon/2}(\varphi^{T_{n}}(y_n))\cap D^s\neq \emptyset$ for $n$ sufficiently large. Thus, choose $z_n \in D^s$ such that $z_n \in W^u_{\epsilon/2}(\varphi^{T_n}(y_n)) \cap D^s$.
Passing to a subsequence once again, let us assume that $y_n \to y \in \overline{D^u}$. 

The initial choice of $D^u$ implies  that $D^u_{\epsilon}(y):=B_\varepsilon(y)\cap W^u(y) \subset U$. It is easy to see from the definition of unstable sets that 
\[
\varphi^{-T_n}\bigl( W^u_{\epsilon/2}(\varphi^{T_n}(y_n)) \bigr) \subset B_{\epsilon/2}(y_n) \cap W^u(y_n).
\]
Since $y_n \to y$, and $\epsilon/2$ is small, for sufficiently large $n$ we have $B_{\epsilon/2}(y_n) \cap W^u(y_n) \subset D^u_{\epsilon}(y) \subset U$.

By defining $q_n = \varphi^{-T_n}(z_n)$, we get that $q_n \in \varphi^{-T_n}\bigl( W^u_{\epsilon/2}(\varphi^{T_n}(y_n)) \bigr) \subset U$ for $n$ large enough, and $\varphi^{T_n}(q_n) = z_n \in D^s$ for all $n$. This yields $q_n \in U$ with $\varphi^{T_n}(q_n) \in D^s\subset V$ for $n$ large enough, as desired.
\end{proof}
\begin{rem}\label{Remark - Intersection}
In the construction of Lemma 4.10, we can choose the disks $D^u\subset U$ and $D^s\subset V$ such that
\[
d(D^u,\partial U)\ge 3\varepsilon\quad\text{and}\quad d(D^s,\partial V)\ge 3\varepsilon,
\]
with $\varepsilon = \frac{1}{6}\min\{\operatorname{diam}(U),\operatorname{diam}(V)\}$.  
The proof then yields $T>0$ and a point $p\in \varphi^T(U)\cap V$ satisfying
\[
\operatorname{dist}\bigl(p,\partial\varphi^T(U)\bigr)\ge 3\varepsilon = \frac{1}{2}\min\{\operatorname{diam}(U),\operatorname{diam}(V)\}
\]
and
\[
\operatorname{dist}\bigl(p,\partial V\bigr)\ge 3\varepsilon = \frac{1}{2}\min\{\operatorname{diam}(U),\operatorname{diam}(V)\}.
\]
\end{rem}

Next, we present a technical Lemma that simplifies the exposition of the proof for Theorem \ref{criterion of robust transitivity}.
\begin{lem}\label{remove h}
    Let $h_{\psi}$ be a semi-conjugacy given by Theorem \ref{TeoRugtopstab}. If $h_{\psi}$ is $\delta$-close to the identity, then it holds: let two open sets $U$ and $V$ be such that $h_\psi(U)\cap W^u(\theta)$ and $h_\psi(V)\cap W^s(\eta)$ contain open disks, $\mathcal{D}^u$ and $\mathcal{D}^s$, with diameter at least $8\delta$, then there exists $T \in \mathbb{R}$ such that 
    \[
    \psi^{T}(U) \cap V \neq \emptyset.
    \]
    \end{lem}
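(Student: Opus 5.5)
Inside the open set $W^u(\theta)$ of $h_\psi(U)$ we only need a disk, and likewise for $V$. The plan is to mix these disks with Lemma \ref{mix open in right angle} (in its quantitative form, Remark \ref{Remark - Intersection}) for the original flow $\varphi^t$, and then lift the intersection back through $h_\psi$ using the conjugacy relation \eqref{conjrelation}. The margin of $8\delta$ in the diameters is what absorbs the error between $h_\psi$ and the identity.

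First I apply Lemma \ref{mix open in right angle} to $\mathcal{D}^u\subset W^u(\theta)$ and $\mathcal{D}^s\subset W^s(\eta)$ for $\varphi^t$. Remark \ref{Remark - Intersection} with $\varepsilon=\frac16\min\{\operatorname{diam}\mathcal{D}^u,\operatorname{diam}\mathcal{D}^s\}\ge \frac43\delta$ gives $T_0>0$ and a point $p\in \varphi^{T_0}(\mathcal{D}^u)\cap \mathcal{D}^s$. This point lies at distance at least $3\varepsilon\ge 4\delta$ from $\partial\mathcal{D}^s$ and from $\partial\varphi^{T_0}(\mathcal{D}^u)$, measured inside the leaves. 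Since $\mathcal{D}^s\subset h_\psi(V)$, I pick $v\in V$ with $h_\psi(v)=p$. Since $\varphi^{-T_0}(p)\in\mathcal{D}^u\subset h_\psi(U)$, I pick $u\in U$ with $h_\psi(u)=\varphi^{-T_0}(p)$. The margins serve two purposes. They guarantee that these choices persist under small perturbations of $p$. They also keep all points under consideration inside $U$ and $V$: by Lemma \ref{controlfibras}, fibers have diameter less than $\delta$, so replacing a point by another point of the same fiber moves it by less than $\delta$.

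Second, I use \eqref{conjrelation}: $h_\psi(\psi^t(u))=\varphi^{r_\psi(u,t)}(h_\psi(u))$. Since $r_\psi(u,\cdot)$ is continuous, surjective and $r_\psi(u,0)=0$, it is a time reparametrization, so there is $T$ with $r_\psi(u,T)=T_0$. Then $h_\psi(\psi^T(u))=p=h_\psi(v)$. So $\psi^T(u)$ and $v$ lie in the same fiber $h_\psi^{-1}(p)$, and hence $d(\psi^T(u),v)<\delta$ by Lemma \ref{controlfibras}.

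The main obstacle is upgrading this to an honest intersection $\psi^T(U)\cap V\neq\emptyset$, since the two points are only $\delta$-close. My first approach is to note that $v$ can be taken with a whole $\delta$-neighborhood contained in $V$. This is where the $4\delta$ margin is used: every point of $h_\psi^{-1}(p')$ for $p'$ near $p$ in $\mathcal{D}^s$ lies in $V$, and, thickening along the flow and the local product structure, $h_\psi^{-1}(B(p,3\delta))\subset V$. Then $\psi^T(u)\in h_\psi^{-1}(p)\subset V$ directly, because $h_\psi(\psi^T(u))=p$. So the real content is to check that $h_\psi^{-1}$ of a full neighborhood of $p$ lies in $V$, not merely that $\mathcal{D}^s\subset h_\psi(V)$. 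If the hypothesis only provides the disk, I would instead use a symmetric argument to choose $u$ so that $\psi^T(u)$ lands in $h_\psi^{-1}(p)\cap V$, via a continuity and degree argument on the disk $\mathcal{D}^s$ of diameter $8\delta$ sitting inside the fibers, as in Proposition \ref{contopenmanifolds}.
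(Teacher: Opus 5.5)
Your first two steps are sound, and the second is more careful than the paper's. You mix $\mathcal{D}^u$ and $\mathcal{D}^s$ with Lemma~\ref{mix open in right angle} and the margins of Remark~\ref{Remark - Intersection}. You then lift through the semi-conjugacy pointwise, choosing $T$ with $r_\psi(u,T)=T_0$; the paper instead asserts one time $t_2$ for the whole set $h_\psi^{-1}(\mathcal{D}^u)\cap U$, although $r_\psi$ depends on the point.

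\textbf{The gap is in the last step, which is the actual content of the lemma.} Knowing that $\psi^T(u)$ and $v$ lie in the same fiber $h_\psi^{-1}(p)$ only gives $d(\psi^T(u),v)<2\delta$. (It is $2\delta$, not $\delta$: $\delta$-closeness to the identity bounds fiber diameters by $2\delta$.) This says nothing about $\psi^T(U)\cap V$ unless $V$ is thick.

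Your first route assumes $h_\psi^{-1}(B(p,3\delta))\subset V$. But the hypothesis $\mathcal{D}^s\subset h_\psi(V)$ only says that each point of $\mathcal{D}^s$ has \emph{some} preimage in $V$. It controls neither the other points of that fiber nor preimages of points off $W^s(\eta)$. In fact $h_\psi^{-1}(B(p,3\delta))\supset B(p,2\delta)$, so your claim would force $V$ to contain a round ball of radius $2\delta$. This fails exactly where the lemma is used in Theorem~\ref{teoTransitividadeRobusta}. There $V=f^{-n-n_1}(U_1)$ is exponentially thin in the $E^{uu}$ direction and is only known to contain the $(n-1)$-disk $\Delta^s_n$. ``Thickening along the flow and the local product structure'' cannot produce a full neighborhood from a disk condition. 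Nor does the particular fiber point $\psi^T(u)$, fixed by your arbitrary choice of $u$, have any reason to lie in $V$.

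\textbf{What is missing is to stop matching single points inside one fiber and use the margins to move the intersection itself.} The paper argues that the preimage sets $h_\psi^{-1}(\mathcal{D}^u)\cap U$ and $h_\psi^{-1}(\mathcal{D}^s)\cap V$ are displaced by less than $\delta$ from the disks. Displacements along the stable or unstable directions are absorbed because the intersection point lies at distance at least $2\delta$ from the boundaries of both disks, so the complementary-dimensional pieces still cross. A displacement along the flow direction is corrected by a local projection along flow lines, which adds a time $t_3$.

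Your fallback (``a continuity and degree argument on $\mathcal{D}^s$'') points in this direction but leaves out everything that matters. It does not say which map, on which disk, has which stable value, or why the point it produces lies in $V$ rather than merely in the right fiber. To carry it out you would also need genuine disks inside $U$ and $V$ whose $h_\psi$-images are $C^0$-close to the mixed disks, as $\Delta^u_n$ and $\Delta^s_n$ provide in the application. The bare inclusion $\mathcal{D}^s\subset h_\psi(V)$ is not enough. As written, the proposal does not prove the statement.
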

    \begin{proof}
        By  the previous Lemma, there exists $t_1>0$ such that 
        \[\varphi^{t_1}(\mathcal{D}^u)\cap \mathcal{D}^ s\neq \emptyset.
        \]
        Moreover, this intersection can be done between much smaller disks, that is, if $z$ lies in this intersection, then we can assume that its distance to the boundary of $\varphi^{t_1}(\mathcal{D}^u)$ and $\mathcal{D}^s$ is at least $2\delta$. 

        If we write $h_\psi(h_\psi^{-1}(\mathcal{D}^u)\cap U)\subset h_\psi(U)$ and $h_\psi(h_\psi^{-1}(\mathcal{D}^s)\cap V)\subset h_\psi(V)$, the intersection above can be written as

        \[\varphi^{t_1}(h_\psi(h_\psi^{-1}(\mathcal{D}^u))\cap U)\cap h_\psi(h_\psi^{-1}(\mathcal{D}^s)\cap V)\neq \emptyset.
        \]
        
        By the semi-conjugacy property, we have that there exists $t_2$ such that $\varphi^{t_1}(h_\psi(h_\psi^{-1}(\mathcal{D}^u))\cap U)=h_\psi\circ\psi^{t_2}(h_\psi^{-1}(\mathcal{D}^u)\cap U)$. Hence,
        \[
        h_\psi\circ\psi^{t_2}(h_\psi^{-1}(\mathcal{D}^u)\cap U)\cap h_\psi(h_\psi^{-1}(\mathcal{D}^s)\cap V)\neq \emptyset.
        \]
    Now, we want to remove $h_\psi$ from the intersection. But since $h_\psi$ is $\delta$-close to the identity, the preimage of each of the disks is displaced by at most $\delta$ either in the stable direction, in the unstable direction, or in the flow direction. If the displacement happens to be in the stable or unstable direction, the disks continue to intersect since the intersection happens at a distance of at least $2\delta$  from their boundaries. So $T=t_2$ works. Otherwise, the displacement occurs in the flow direction, and we can use a local projection by the flow lines to find $t_3\in \R$ such that 
    \[
        \psi^{t_3+t_2}(U)\cap V\supset \psi^{t_3+t_2}(h_\psi^{-1}(\mathcal{D}^u)\cap U)\cap (h_\psi^{-1}(\mathcal{D}^s)\cap V)\neq \emptyset.
        \]
        In any case, we find $T\in \R$ such that the Lemma holds.
    \end{proof}

We are now ready to prove Theorem \ref{criterion of robust transitivity}. Let us restate it in detail:

\begin{thm} \label{teoTransitividadeRobusta}
	Let $g_1$ be a $C^2$ Riemannian metric on a compact differentiable manifold $M$ of dimension $n$ with no conjugate points and let $\varphi^t:T^1M\to T^1M$ be its geodesic flow. Suppose that $\varphi^t$ is expansive with stable sets $W^s_{\varphi}$ and unstable sets $W^u_{\varphi}$. Suppose that in addition $\varphi^t$ is partially hyperbolic with a splitting $T(T^1M)=E^{ss}_{\varphi}\oplus E^c_{\varphi} \oplus \langle X \rangle \oplus E^{uu}_{\varphi}$, and it has the SH-Saddle property of index $(d,d)$ where $d=n-1-\textnormal{dim}E^{ss}_{\varphi}=n-1-\textnormal{dim}E^{uu}_{\varphi}$. 
	
	Then there is $\cU$ a $C^2$ neighborhood of $g_1$ (or $\mathcal{V}$ a $C^1$ neighborhood of $\varphi^t$) such that if $g\in \cU$, then the geodesic flow of $g$ is transitive.
	
	 (\emph{i.e.} $\varphi^t$ is $C^1$ robustly transitive, or $g_1$ is $C^2$ robustly transitive among metrics).  
\end{thm}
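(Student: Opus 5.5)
We will follow the four steps outlined in the introduction, choosing the neighbourhood $\cU$ as an intersection of finitely many $C^2$-neighbourhoods of $g_1$.

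\textbf{Choosing the neighbourhood.} By the robustness of partial hyperbolicity and Proposition \ref{SHopenformetrics}, fix $\cU_1$ such that every $g\in\cU_1$ has a partially hyperbolic geodesic flow $\psi^t$ with the SH-Saddle property of index $(d,d)$. Moreover, the constants $\lambda>1$, $L>0$ and $T$ (with $f=\psi^T$) can be chosen uniformly, as in Theorem \ref{shisopen}. Corollary \ref{corotamanodisco} then provides a uniform $\delta_1>0$. Set $r=\delta_1$ and take $\rho=\rho(n-1,r)$ and $\varepsilon_0$ from Proposition \ref{contopenmanifolds}. Choose $\delta>0$ with $\delta<\rho$ and $8\delta<\varepsilon_0$, so that $\delta\ll\delta_1$ as well. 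Using Theorem \ref{TeoRugtopstab} and Lemma \ref{controlfibras}, shrink to $\cU_0$ so that for $g\in\cU_0$ the semiconjugacy $h_\psi$ satisfies $d_{C^0}(h_\psi,Id)<\delta$ and all its fibers have diameter smaller than $\delta$. Set $\cU=\cU_0\cap\cU_1$.

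\textbf{Growing center-unstable disks inside an open set.} Fix $g\in\cU$ and nonempty open sets $U,V$. Take $x\in U$ and a tiny strong unstable disk in $U$ through $x$. Since leaves of $\cW^{uu}$ expand uniformly, replace $U$ by $\psi^{-m}(U)$ for a suitable $m$; transitivity is unaffected by such time-shifts. After this replacement $U$ contains a point $x^u\in H^+_{\lambda,d}(\psi^T)$ on a long strong unstable leaf. Through $x^u$ take a small disk tangent to $E^{uu}\oplus\cC^u(x^u)$; it has dimension $k+d=n-1$. Corollary \ref{corotamanodisco} says its forward images contain center disks of radius $\ge\delta_1$, and the strong unstable part grows as well. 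Hence, for some $N$, $\psi^{NT}(U)$ contains an $(n-1)$-dimensional topological disk $\Delta^u$ of inner radius $\ge r$, tangent to the center-unstable cone. Analogously, in backward time $\psi^{-N'T}(V)$ contains such a disk $\Delta^s$ tangent to the center-stable cone.

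\textbf{Projecting and concluding.} Compose $h_\psi$ with the local projection along $W^s_\varphi$ and the flow direction onto a local unstable leaf $W^u_\varphi(\theta)$, using the local product structure of Theorem \ref{teoLPSruggiero}. We need this map $\Delta^u\to W^u_\varphi(\theta)\cong\R^{n-1}$ to be $\rho$-light. The key point is that $\Delta^u$ is uniformly transverse to $E^{ss}\oplus E^c_s\oplus\langle X\rangle$. Hence two points of $\Delta^u$ in the same fiber of the composed map must be mapped by $h_\psi$ into one local center-stable leaf. This forces them to be at distance $O(\delta)$, since $h_\psi$ is $\delta$-close to the identity. Granting this, Proposition \ref{contopenmanifolds} shows that $h_\psi(\Delta^u)$ projects onto a set containing a disk of diameter $\ge\varepsilon_0>8\delta$ in $W^u_\varphi(\theta)$. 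Pulling back along the flow and stable holonomy, $h_\psi(\psi^{NT}U)\cap W^u(\theta')$ contains such a disk, and similarly for $V$ with a stable leaf. Lemma \ref{remove h} then gives $T'$ with $\psi^{T'}(\psi^{NT}U)\cap\psi^{-N'T}V\neq\emptyset$, which yields the required intersection $\psi^{s}(U)\cap V\neq\emptyset$. We still have to check that $s>0$, or else argue with both orientations, since the ``$\exists T\in\R$'' version for all pairs $U,V$ suffices for flows with dense orbits.

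\textbf{Main obstacle.} The hardest step is the lightness estimate. $h_\psi$ is only $C^0$, so the cone transversality of $\Delta^u$ must be converted into a statement that points of $\Delta^u$ on a common $\varphi$-center-stable set are close. I would first try to show that $h_\psi$ maps the $\psi$-center-stable sets into $\varphi$-center-stable sets, by shadowing and forward-asymptoticity. I would then use that a disk tangent to the cu-cone meets a local $\psi$-cs plaque in a set of diameter $O(\delta)$, via expansion of the $\psi$-forward iterates of points in the cu-cone.
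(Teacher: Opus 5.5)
Your outline follows the paper's proof step by step. It uses the same neighbourhoods: $\rho=\rho(n-1,\delta_1)$, with $h_\psi$ within $\frac18\min\{\rho,\varepsilon_0\}$ of the identity and having small fibres. It builds the same $(n-1)$-disks $\Delta^u,\Delta^s$ (an SH center disk plus short strong leaves), grown by Corollary \ref{corotamanodisco}. It uses the same maps $\Pi^u\circ p\circ h_\psi$ and $\Pi^s\circ p\circ h_\psi$.

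The lightness estimate you grant is exactly the paper's Claim, and the paper justifies it essentially by the argument you sketch. There, $\Delta^u$ is tangent to $\cC^{uu}\times\cC^u$, and this family is asserted to be topologically transverse to the $C^0$ sets $W^{cs}_\varphi$ (not to an invariant ``$E^c_s$'', which does not exist here). Hence $\Delta^u$ meets each $W^{cs}_{\varphi,loc}(\eta)$ once, and the control on $h_\psi$ then gives $\rho$-lightness.

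Two small points. First, to put a long strong unstable plaque inside the set you must iterate forward, $\psi^{m}(U)$ (the paper's $f^{n_1}(U_2)$), not $\psi^{-m}(U)$. Second, your handling of the sign of the time (two-sided transitivity suffices on a compact manifold) is fine. It even avoids the paper's reliance on a $t_1$ valid for all $n\ge n_2$.

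The step that fails as written is ``pulling back along the flow and stable holonomy, $h_\psi(\psi^{NT}U)\cap W^u(\theta')$ contains such a disk''. All you know is that $h_\psi(\Delta^u)$ meets $W^{cs}_{\varphi,loc}(y)$ for each $y$ in a disk $D\subset\Sigma^u$ of diameter $\ge\varepsilon_0$. The set $h_\psi(\Delta^u)$ lies in no unstable set. Moreover, $h_\psi(\psi^{NT}U)$ has no controlled thickness in the center-stable direction, since $U$ is arbitrary and $\psi^{NT}U$ is thin transversally to $\Delta^u$. So nothing forces it to contain a large piece of an unstable set, and the hypothesis of Lemma \ref{remove h} is not verified.

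The paper does not attempt this. It applies Lemma \ref{mix open in right angle} directly to the projected disks $P^u_{\psi,\xi^u_n}(\Delta^u_n)\subset\Sigma^u_{\xi^u_n}$ and $P^s_{\psi,\xi^s_n}(\Delta^s_n)\subset\Sigma^s_{\xi^s_n}$, which are genuine open pieces of local unstable and stable plaques. From there it passes to $\varphi^{t_1}(h_\psi(\Delta^u_n))\cap h_\psi(\Delta^s_n)\neq\emptyset$. Only then does it remove $h_\psi$, by the displacement argument in the proof of Lemma \ref{remove h}, using that the intersection lies at distance $\ge 2\delta$ from the boundaries.

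You should restructure the end of your argument this way. To make it rigorous, the passage from the projections back to $h_\psi(\Delta^u_n)$ and $h_\psi(\Delta^s_n)$ is where you need the robustness of stable values (Proposition \ref{propLZadapted}), not merely that the images contain a disk.
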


\begin{proof}	
Let $\varphi^t:T^1M\to T^1M$ be the geodesic flow of the metric $g_1$. By hypothesis, we know there is $T\in \R$ such that $f_1=\varphi^T: T^1M\to T^1M$ is a partially hyperbolic diffeomorphism with a splitting of the form $T(T^1M)=E^{ss}_{f_1}\oplus E^c_{f_1} \oplus E^{uu}_{f_1}$ where $E^{ss}_{f_1}=E^{ss}_{\varphi}$, $E^{uu}_{f_1}=E^{uu}_{\varphi}$ and $E^c_{f_1}=E^c_{\varphi}\oplus \langle X \rangle$. Let us denote by $\cC^{ss}$ and $\cC^{uu}$ the strong stable and strong unstable cones of $f_1=\varphi^T$. Since partial hyperbolicity is $C^1$-open among diffeomorphisms, we know there is a $C^2$ neighbourhood $\cU_0$ of $g_1$ (or a $C^1$ neighbourhood $\mathcal{V}_0$ of $\varphi^t$) such that, if $g\in \cU_0$ and $f:=\psi^T$ where $\psi^t$ is the geodesic flow of $g$, then the same family of strong stable and strong unstable cones holds for $f=\psi^T$.

In addition $f_1$ has the SH-Saddle property of index $(d,d)$, where $d=n-1-k$, and since SH-Saddle property is also $C^2$-open among partially hyperbolic Riemannian metrics by Proposition \ref{SHopenformetrics} (see also Proposition \ref{shfflowsisopen} and Theorem \ref{shisopen}), we know there are constants $\lambda>1$, $L>0$, $\delta_1>0$ and a $C^2$ neighbourhood $\cU_1$ of $g_1$ (or a $C^1$ neighbourhood $\mathcal{V}_1$ of $\varphi^t$) such that, if $g\in \cU_1$ and $f:=\psi^T$ where $\psi^t$ is the geodesic flow of $g$, then: 
	\begin{eqnarray*}
		H^-_{\lambda,d}(f)\cap \cW^{ss}_f(\theta,L)\neq \emptyset \ \ \text{for every} \ \theta \in T^1M, \\
		H^+_{\lambda,d}(f)\cap \cW^{uu}_f(\theta,L)\neq \emptyset \ \ \text{for every} \ \theta \in T^1M. 
	\end{eqnarray*}	
	Moreover by Corollary \ref{corotamanodisco}, for such $f$ and $\theta^u \in H^+_{\lambda,d}(f)$ and $\cD^u$ a center disk of dimension $d$ tangent to $\cC^u(\theta^u)$, there is $N>0$ such that $f^n(\cD^u)$ contains a disk of diameter bigger than $2\delta_1$ for every $n\geq N$. The same happens with the stable manifold: for every $\theta^s \in H^-_{\lambda,d}(f)$ and $\cD^s$ a center disk of dimension $d$ tangent to $\cC^s(\theta^s)$, there is $N>0$ such that $f^{-n}(\cD^s)$ contains a disk of diameter bigger than $2\delta_1$ for every $n\geq N$. Recall that $\cC^s$ and $\cC^u$ are the center cones invariant for the past and the future, respectively, given by the SH-Saddle property. 
	
	
	Now let us define the following constant:
 $$	\rho = \rho(n-1,\delta_1)$$ where $\rho(\cdot \ ,\cdot)$ is given by Corollary  \ref{corcontieneabierto} (see also Proposition \ref{contopenmanifolds}).
	
	According to Theorem \ref{TeoRugtopstab} since $\varphi^t$ is expansive, there is a $C^0$ neighbourhood $\mathcal{V}_2$ of $\varphi^t$ (or a $C^1$ neighborhood $\cU_2$ of $g_1$) such that, for every flow $\psi^t\in \mathcal{V}_2$ there are continuous and surjective functions $h_{\psi}:T^1M \to T^1M$ and $r_{\psi}:T^1M \times \R \to \R$ with $r_{\psi}(\theta,0)=0$ such that: 
		\begin{equation} \label{eqsemiconj} h_{\psi} \circ \psi^t(\theta)=\varphi^{r_{\psi}(\theta,t)}\circ h_{\psi}(\theta) \ \ \text{for every} \ \ t\in \R, \ \theta\in T^1M.
		\end{equation}
		We can take $\mathcal{V}_2$ sufficiently small such that for every flow $\psi^t \in \mathcal{V}_2$ we have $d_{C^0}(h_{\psi},\textit{Id})<\frac{1}{8}\min\{\rho,\varepsilon_0\}$, where $\varepsilon_0$ is the uniform constant given by Proposition \ref{contopenmanifolds}, and moreover (see Lemma \ref{controlfibras}) $\text{diam}(h_{\psi}^{-1}(h_{\psi}(\theta)))<\rho/2$ for every $\theta \in T^1M$ (in other words $h_{\psi}$ is $\rho/2$-light, see Definition \ref{dflight}).
		

Now take $\cU=\cU_0 \cap\cU_1\cap \cU_2$. We claim that every Riemannian metric $g\in \cU$, has a transitive geodesic flow. Take $g\in \cU$ and denote by $\psi^t:T^1M\to T^1M$ its corresponding geodesic flow. By Proposition \ref{propequivtrans} (see also Remark \ref{remarktransflows}) it is enough to prove that for any two open sets  $U_1, U_2 \subset T^1M$ there is $t \in \R^+$ such that $\psi^t(U_1)\cap U_2 \neq \emptyset$.
		
		Take two points $\theta_1\in U_1$ and $\theta_2\in U_2$, and denote by $f:=\psi^T$. Let $n_1 \in \N$ be such that $f^{-n_1}(U_1)\supset \cW^{ss}_f(f^{-n_1}(\theta_1),L)$ and $f^{n_1}(U_2)\supset \cW^{uu}_f(f^{n_1}(\theta_2),L)$. Take $\theta^s \in H^-_{\lambda,d}(f) \cap \cW^{ss}_f(f^{-n_1}(\theta_1),L)$ and $\theta^u \in H^+_{\lambda,d}(f) \cap \cW^{uu}_f(f^{n_1}(\theta_2),L)$ given by $(d,d)$ SH-Saddle property. 
		
		Now take $\cD^s$ a center disk of dimension $d$ tangent to $\cC^s(\theta^s)$ and centered at $\theta^s$, and take $\cD^u$ a center disk of dimension $d$ tangent to $\cC^u(\theta^u)$ centered at $\theta^u$. We can take $\cD^s, \cD^u$ small enough such that $\cD^s \subset f^{-n_1}(U_1)$ and $\cD^u \subset f^{n_1}(U_2)$. 
		
		Now take the following sets:
		$$\Delta^s=\bigcup_{\theta\in \cD^s} \cW^{ss}_f(\theta,l) \ \ \text{and}\ \ \Delta^u=\bigcup_{\theta\in \cD^u}\cW^{uu}_f(\theta,l).$$ 
		We can choose $l>0$ small enough such that $\Delta^s \subset f^{-n_1}(U_1)$ and $\Delta^u \subset f^{n_1}(U_2)$. 
		Notice that $\Delta^s$ and $\Delta^u$ are disks of dimension equal to $n-1$ (recall that $n-1=d+k$). 
		
		Now the idea is to use Corollary \ref{corcontieneabierto} applied to the maps $$P^s_{\psi,\xi^s}:=\Pi^s_{\xi^s} \circ p \circ h_{\psi} \ \ \ \text{and} \ \ \ P^u_{\psi,\xi^u}:=\Pi^u_{\xi^u} \circ p \circ h_{\psi} $$ where $\xi^s=h_{\psi}(\theta^s)$ and $\xi^u=h_{\psi}(\theta^u)$, the maps $\Pi^s_{\theta}:\Sigma_{\theta}\to \Sigma^s_{\theta}$ and $\Pi^u_{\theta}:\Sigma_{\theta}\to \Sigma^u_{\theta}$ are the local projections restricted to the local section $\Sigma_\theta$ given by the local product structure of the flow, and the function $p$ is the local projection from $T^1M$ to $\Sigma_\theta$ given by the flow lines of $\varphi$. 
		

	\begin{claim}
		The functions $P^s_{\psi,\xi^s}\big|_{\Delta^s}:\Delta^s \to \Sigma^s_{\xi^s}$ and $P^u_{\psi,\xi^u}\big|_{\Delta^u}:\Delta^u\to \Sigma^u_{\xi^u}$ are $\rho$-light.
	\end{claim}	

    \begin{proof}[Proof of the claim]
		Let us see the case $P^u_{\psi,\xi^u}\big|_{\Delta^u}:\Delta^u \to \Sigma^u_{\xi^u}$ since the other one is symmetric. 
		Notice that the disk $\Delta^u$ is tangent to the cones $\mathcal{C}^{uu} \times \mathcal{C}^u$ where $\mathcal{C}^{uu}$ is the strong unstable cones given by the partial hyperbolicity of $\varphi^T$. Recall that the same family of cones holds for $f=\psi^T$, since the flows $\psi^t$ and $\varphi^t$ are $C^1$ close enough. Moreover, this family of cones is topologically transverse to the center stable sets of $\varphi$. In particular, we have that
		$$ \# \left\{ \Delta^u \cap W^{cs}_{\varphi, loc}(\eta) \right\} = 1 \ \ \text{for every} \ \eta \in \Delta^u. $$ 
		Since $h_{\psi}$ is $\rho/2$-light it is clear that
		$$ \text{diam} \left\{ h_{\psi}(\Delta^u) \cap W^{cs}_{\varphi, loc}(\eta)\right\} < \rho \ \ \text{for every} \  \eta \in \Delta^u$$
		and this is equivalent to the the $\rho$-lightness of the function $P^u_{\psi,\xi^u}\big|_{\Delta^u}:\Delta^u\to \Sigma^u_{\xi^u}$.

\end{proof}
In the same way, for every $n\geq 0$ we consider the maps
$$P^s_{\psi,\xi^s_n}\big|_{\Delta^s_n}:\Delta^s_n \to \Sigma^s_{\xi^s_n} \ \ \text{and} \ \  P^u_{\psi,\xi^u_n}\big|_{\Delta^u_n}:\Delta^u_n\to \Sigma^u_{\xi^u_n}$$
where:
\begin{itemize}
	\item $\Delta^s_n=f^{-n}(\Delta^s)$ and $\Delta^u_n=f^n(\Delta^u)$.
	\item $\xi^s_n=h_{\psi}(f^{-n}(\theta^s))$ and 	$\xi^u_n=h_{\psi}(f^n(\theta^u))$.
	\item $P^s_{\psi,\xi^s_n}:=\Pi^s_{\xi^s_n} \circ p \circ h_{\psi}$ and $P^u_{\psi,\xi^u_n}:=\Pi^u_{\xi^u_n} \circ p \circ h_{\psi}$.
\end{itemize}
The functions $\Pi^s_{\theta}:\Sigma_{\theta}\to \Sigma^s_{\theta}$, $\Pi^u_{\theta}:\Sigma_{\theta}\to \Sigma^u_{\theta}$ and $p$ are the same as above. Notice that basically $P^s_{\psi,\xi^s_n} \simeq P^s_{\psi,\xi^s} \circ f^{-n}$ and $P^u_{\psi,\xi^s_n} \simeq P^u_{\psi,\xi^s} \circ f^{n}$. Since the family of disks $\Delta^s_n$ is also tangent to the cones $\cC^{ss}\times \cC^s$ and the family of disks $\Delta^u_n$ is tangent to the cones $\cC^{uu}\times \cC^u$, in the same way as above, we get the following claim. 


\begin{claim}
	The functions $P^s_{\psi,\xi^s_n}\big|_{\Delta^s_n}:\Delta^s_n \to \Sigma^s_{\xi^s_n}$ and $P^u_{\psi,\xi^u_n}\big|_{\Delta^u_n}:\Delta^u_n\to \Sigma^u_{\xi^u_n}$ are $\rho$-light, for every $n\geq 0$.  
\end{claim} 

 

Now we continue with the proof of the theorem. By Corollary \ref{corotamanodisco} there is $n_2\in \N$ such that for every $n\geq n_2$, $f^{-n}(\cD^s)$ contains a disk of size bigger than $2\delta_1$ centered at $f^{-n}(\theta^s)$. Therefore, $\Delta^s_n=f^{-n}(\Delta^s)$ contains a disk of dimension $n-1$, of size bigger than $2\delta_1$. In the same way, for every $n\geq n_2$, $f^n(\cD^u)$ contains a disk of size bigger than $2\delta_1$ centered at $f^n(\theta^u)$, and therefore $\Delta^u_n=f^{n}(\Delta^u)$ contains a disk of dimension $n-1$, of size bigger than $2\delta_1$.

To finish the proof just notice that for every $n\geq n_2$, the function $P^u_{\psi,\xi^u_n}\big|_{\Delta^u_n}:\Delta^u_n\to \Sigma^u_{\xi^u_n}$ is in the hypotheses of Corollary \ref{corcontieneabierto} since $D^u_n$ is a disk of size bigger than $2\delta_1$ centered at $f^n(\theta^u)$ and by the previous claim $P^u_{\psi,\xi^u_n}$ is $\rho$-light. Then $P^u_{\psi,\xi^u_n}(\Delta^u_n) \subset \Sigma^u_{\xi^u_n}$ contains an open set, for every $n\geq n_2$. Moreover, by Proposition \ref{contopenmanifolds}, it contains an open disk of diameter at least $\varepsilon_0$.

In the same way, $P^s_{\psi,\xi^s_n}\big|_{\Delta^s_n}:\Delta^s_n \to \Sigma^s_{\xi^s_n}$ is in the hypotheses of Corollary \ref{corcontieneabierto}, and therefore $P^s_{\psi,\xi^u_n}(\Delta^s_n) \subset \Sigma^s_{\xi^u_n}$ contains an open set, for every $n\geq n_2$. Again, it contains an open disk of diameter at least $\varepsilon_0$.


Since the flow $\varphi^t$ is transitive (by Theorem \ref{teoLPSruggiero}) and the subsets $P^u_{\psi,\xi^u_n}(\Delta^u_n)$ and $P^s_{\psi,\xi^s_n}(\Delta^s_n)$ contain topological disks of complementary dimensions, a direct application of Lemma \ref{mix open in right angle} shows that there is $t_1>0$ such that for any $n \geq n_2$ we have 
$$\varphi^{t_1}(h_{\psi} (\Delta^u_n))\cap h_{\psi}(\Delta^s_n) \neq \emptyset.$$
Recall that $\Delta^u_n=f^n(\Delta^u) \subseteq f^{n+n_1}(U_2)$ and $\Delta^s_n=f^{-n}(\Delta^s) \subseteq f^{-n-n_1}(U_1)$, then this is the same as
$$\varphi^{t_1}(h_{\psi} (f^{n+n_1}(U_2)))\cap h_{\psi}(f^{-n-n_1}(U_1)) \neq \emptyset.$$
By our choice of $h_\psi$ at $C^0$-distance at most $\frac{\varepsilon_0}{8}$ from the identity, we can argue as in the proof of Lemma \ref{remove h} to obtain $t_2 \in \R$ such that for every $n\geq n_2$ we have
$$\psi^{t_2}(f^{n+n_1}(U_2))\cap f^{-n-n_1}(U_1) \neq \emptyset.$$ 

Since $f=\psi^T$, this implies that
$$\psi^{t_2+T(n+n_1)}(U_2) \cap \psi^{-T(n+n_1)}(U_1) \neq \emptyset$$ which is equivalent to
$$\psi^{t_2+2T(n+n_1)}(U_2)\cap U_1 \neq \emptyset$$ for every $n\geq n_2$.  
Since the choice of $U_1$ and $U_2$ was arbitrary, this proves that $\psi^t$ is topologically transitive. 
\end{proof}

\begin{rem}
    Let us bring attention to the elements that are crucial to our proof. Indeed, the proof is purely topological and dynamical and does not rely heavily on the nature of the geodesic flow. The first one is partial hyperbolicity and the SH-Saddle property for flows. Proposition \ref{shfflowsisopen} gives us that these are $C^1$-open properties inside the class of flows. The second element is expansivity with local product structure for the stable and unstable sets. These imply topological transitivity, density of periodic points, and topological stability. The same proof we have presented for Theorem \ref{teoTransitividadeRobusta} works for the general class of flows, and we will state it for future use.
\end{rem}
\begin{thm}
    Let $\varphi^t: M\rightarrow M$ be a partially hyperbolic flow of class $C^1$ on a compact and connected differentiable manifold. Assume that $\varphi^t$ satisfies the following conditions:
    \begin{enumerate}
        \item $\varphi^t$ is expansive with local product structure between the stable set $W^s$ and the unstable set $W^u$.
        \item $\varphi^t$ has SH-Saddle property of index $(d_1,d_2)$ where $d_1=\textnormal{dim}W^s - \textnormal{dim}E^{ss}$ and $d_2=\textnormal{dim}W^u - \textnormal{dim}E^{uu}$.
    \end{enumerate}
    Then, $\varphi^t$ is $C^1$-robustly transitive.
\end{thm}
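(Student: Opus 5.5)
The plan is to repeat the proof of Theorem \ref{teoTransitividadeRobusta} verbatim, stripping out every use of the geodesic structure. I will check that each ingredient has a purely flow-theoretic replacement. In the geodesic case, Theorems \ref{teoLPSruggiero} and \ref{TeoRugtopstab} supplied three facts: transitivity of $\varphi^t$, the continuous foliations by stable and unstable sets with local product structure, and topological stability with a semiconjugacy $h_\psi$ that is $C^0$-close to the identity. In the general setting, hypothesis (1) gives the local product structure directly. From expansivity together with the local product structure, I will derive the shadowing lemma and hence topological stability, following the standard argument (Bowen/Thomas style) that Ruggiero used. Transitivity of $\varphi^t$ would be derived the same way, via density of periodic orbits plus a spectral-decomposition argument. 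Alternatively, it could simply be assumed as part of what ``local product structure'' delivers; I would state this explicitly.

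Next, I choose the neighbourhoods. Proposition \ref{shfflowsisopen} and openness of partial hyperbolicity give a $C^1$-neighbourhood $\mathcal{V}_0\cap\mathcal{V}_1$ of $\varphi^t$ on which the following hold for $f=\psi^T$:
\begin{itemize}
\item the strong cones $\mathcal{C}^{ss},\mathcal{C}^{uu}$ persist;
\item the constants $\lambda$, $L$ and $\delta_1$ of Theorem \ref{shisopen} and Corollary \ref{corotamanodisco} are uniform.
\end{itemize}
I then fix $\rho=\rho(\max\{\dim W^s,\dim W^u\},\delta_1)$ and $\varepsilon_0$ from Proposition \ref{contopenmanifolds}. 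By topological stability and Lemma \ref{controlfibras}, I shrink to $\mathcal{V}_2$ so that $d_{C^0}(h_\psi,\mathrm{Id})<\frac18\min\{\rho,\varepsilon_0\}$ and $h_\psi$ is $\rho/2$-light.

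Given open sets $U_1,U_2$, I iterate to obtain strong leaves of length $L$. On these leaves I pick SH points $\theta^s,\theta^u$ and build the disks $\Delta^s=\bigcup_{\cD^s}\cW^{ss}_f(\cdot,l)$ and $\Delta^u$. Here the index choice $d_1=\dim W^s-\dim E^{ss}$ is exactly what makes $\dim\Delta^s=\dim W^s$, and likewise for $\Delta^u$. So the projections $P^s=\Pi^s\circ p\circ h_\psi$ go between spaces of equal dimension, and Corollary \ref{corcontieneabierto} applies.

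The lightness claim uses transversality of $\Delta^u_n$ (tangent to $\mathcal{C}^{uu}\times\mathcal{C}^u$) to the local centre-stable sets of $\varphi$. This is the main obstacle. In the geodesic case it was essentially asserted; in general I would argue it through expansivity. Suppose a centre-stable plaque met $\Delta^u_n$ in two points. Those points would lie on a curve tangent to the unstable-type cone that stays close under forward iteration, and the uniform expansion along $\mathcal{C}^{uu}\times\mathcal{C}^u$ on the iterates contradicts this. I expect to need the cone expansion from Corollary \ref{corotamanodisco} only for large $n$, so I would restrict to $n\ge n_2$.

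Finally, Corollary \ref{corotamanodisco} gives disks of size $2\delta_1$. The projected images then contain disks of diameter $\varepsilon_0$ inside the $W^u$ and $W^s$ leaves. Lemma \ref{mix open in right angle}, which only uses transitivity and local product structure, produces $t_1$ with $\varphi^{t_1}(h_\psi\Delta^u_n)\cap h_\psi\Delta^s_n\neq\emptyset$. Lemma \ref{remove h} (valid since $8\delta<\varepsilon_0$) removes $h_\psi$, and unwinding $f=\psi^T$ yields $\psi^{t}(U_2)\cap U_1\ne\emptyset$ for some $t>0$. This holds for every $\psi$ in $\mathcal{V}_0\cap\mathcal{V}_1\cap\mathcal{V}_2$, which proves $C^1$-robust transitivity.
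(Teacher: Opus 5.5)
Your route is the paper's own. The paper justifies the general statement only by a remark: the proof of Theorem \ref{teoTransitividadeRobusta} carries over once expansivity and local product structure supply transitivity, density of periodic orbits and topological stability.

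\textbf{The genuine gap is transitivity of $\varphi^t$ itself.} Your suggested derivation (dense periodic orbits plus spectral decomposition) needs $\Omega(\varphi^t)=M$. The decomposition gives finitely many closed, disjoint, transitive pieces of $\Omega(\varphi^t)$, and connectedness of $M$ forces a single piece only if these pieces cover $M$. In Theorem \ref{teoLPSruggiero} this is automatic: the geodesic flow preserves the Liouville measure, so Poincar\'e recurrence gives $\Omega=T^1M$. Nothing in the present hypotheses provides it.

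In fact the implication fails. Take a non-transitive Anosov flow on a closed connected $3$-manifold (Franks--Williams). It is expansive with local product structure. It is partially hyperbolic with $E^c=0$, and its time-$T$ map has one-dimensional centre $\langle X\rangle$. It has $\dim W^s=\dim E^{ss}$ and $\dim W^u=\dim E^{uu}$, so $d_1=d_2=0$ and the SH-Saddle hypothesis is vacuous. Yet it is not transitive.

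The paper's remark asserts the same implication, so the statement itself needs an extra hypothesis. Your fallback is the correct repair: assume transitivity of $\varphi^t$ explicitly, or $\Omega(\varphi^t)=M$ (e.g.\ an invariant volume). But this is an added hypothesis, not something local product structure delivers. A smaller point: take $\rho=\min\{\rho(\dim W^s,\delta_1),\rho(\dim W^u,\delta_1)\}$ rather than $\rho$ at the larger dimension, and likewise for $\varepsilon_0$, since nothing says $\rho(d,r)$ is monotone in $d$.

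\textbf{Your expansivity argument for the lightness claim also does not work as written.} First, the SH cones $\cC^u$ and the estimate $\|Df^n v\|\ge C\lambda^n\|v\|$ exist only along the forward orbit of $\theta^u$ (Definition \ref{defshuu}). So there is no uniform expansion along $\cC^{uu}\times\cC^u$ to invoke at other points of $\Delta^u_n$.

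Second, the centre-stable sets are those of $\varphi$, while the disks and cones belong to $\psi^T$. If $a,b\in\Delta^u_n$ lie in one fibre, then $h_\psi(a),h_\psi(b)$ lie in one local $W^{cs}_\varphi$. From this you only learn, via the semiconjugacy, that the $\psi$-orbits of $a$ and $b$ stay close. "Close" here means within $2\delta$ plus the local product scale, and only up to the distinct reparametrizations $r_\psi(a,\cdot)$ and $r_\psi(b,\cdot)$. Moreover, only the two points stay close, not a curve joining them.

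A workable version would handle a fibre component in two parts:
\begin{enumerate}
\item its $\cW^{uu}$-spread, using the uniform strong-unstable expansion of $\psi^T$, which holds everywhere;
\item its spread along $f^n(\cD^u)$, using the growth mechanism behind Corollary \ref{corotamanodisco}, which acts only near $f^n(\theta^u)$ and at scales below $\delta_1$.
\end{enumerate}
This requires choosing $\delta$, $\rho$ and the local-product scale all much smaller than $\delta_1$. The paper only asserts this step through ``topological transversality'', so you would have to supply this argument in full.
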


\section{Proof of Theorem \ref{THM A}}
\label{proof of main theorem}
In this section, we prove Theorem \ref{THM A}. First, in Subsection \ref{ssExSH}, we show that there are examples of Riemannian metrics that present the SH-Saddle property. Then, in Subsection \ref{ssexpandts}, we conclude the proof of the theorem by applying Theorem \ref{criterion of robust transitivity} to those examples.

\subsection{Existence of the SH-Saddle property} \label{ssExSH}
In this subsection, we are going to present examples satisfying the conditions of Theorem \ref{criterion of robust transitivity} in order to obtain robustly transitive geodesic flows. In fact, the examples are those constructed in \cite{dJPR}, and we quickly recover the construction here for the sake of completeness. The following results are obtained in \cite{dJPR}:

\begin{thm}[Theorem B in \cite{dJPR}]
\label{ergodic partially hyperbolic geodesic flow}
    Let $(M,g)$ be a compact Kähler manifold of holomorphic curvature $-1$ or a compact locally symmetric quaternionic Kähler manifold of negative curvature. Then there exists a $C^2-$deformation $\tilde g$ of the metric $g$ with the following properties:
    \begin{enumerate}
        \item The geodesic flow of $\tilde g$ is partially hyperbolic.
        \item There exists a closed geodesic $\gamma$ with a parallel Jacobi field along it.
        \item The sectional curvatures $\tilde K$ are all negative outside $\gamma$.
    \end{enumerate}
\end{thm}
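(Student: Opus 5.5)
The plan is to build $\tilde g$ as an explicit local modification of $g$ in Fermi coordinates around a closed geodesic $\gamma$. Partial hyperbolicity will then be checked by quantitative Jacobi/Riccati estimates rather than by a perturbative argument. The reason is that forcing some sectional curvatures to vanish along $\gamma$ changes them by a definite amount (from $-1/4$ to $0$). Since curvature is a second-order quantity in the metric, $\tilde g$ is a genuine $C^2$-deformation of $g$ and not a $C^2$-small one.

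\textbf{Step 1 (curvature structure of $g$).} For complex hyperbolic space (holomorphic curvature $-1$), and analogously in the quaternionic case, the sectional curvature of a plane containing a unit vector $v$ lies in $[-1,-1/4]$. It equals $-1$ exactly on the planes $\langle v, Jv\rangle$ (resp.\ $\langle v, Iv\rangle, \langle v, Jv\rangle, \langle v, Kv\rangle$) and $-1/4$ on the orthogonal complement $\mathcal{H}(v)$. Consequently the Jacobi operator $R_v=R(\cdot,v)v$ has eigenvalues $-1$ and $-1/4$ on parallel eigenbundles, and the geodesic flow of $g$ splits as
\[
E^{ss}\oplus E^{c}\oplus\langle X\rangle\oplus E^{uu},
\]
with rates $e^{\mp t}$ on $E^{ss},E^{uu}$ (the $J$-directions) and $e^{\mp t/2}$ on $E^c$ (the $\mathcal{H}$-directions). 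I would take this splitting as the model and fix $\gamma$ to be any closed geodesic of $g$.

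\textbf{Step 2 (the deformation).} In Fermi coordinates $(s,x)$ along $\gamma$, with $x$ in the normal space, I would set
\[
\tilde g = g + \beta(|x|)\, Q_s(x),
\]
where $Q_s$ is a quadratic form supported on the $\mathcal{H}(\gamma')$-components and $\beta$ is a bump function. The choice of $Q_s$ and $\beta$ is made so that the Jacobi operator $\tilde R_{\gamma'}$ vanishes on $\mathcal{H}(\gamma')$ while remaining $-1$ on the $J$-directions. Then any parallel field $Y\in\mathcal{H}(\gamma')$ satisfies the Jacobi equation $Y''+\tilde R_{\gamma'}Y=0$, which gives item (2). Item (3) is arranged by letting the added curvature vanish only at $x=0$, for instance with a profile whose curvature correction behaves like $\tfrac14(1-\phi(|x|^2))$ with $\phi(0)=0$ and $\phi>0$ elsewhere. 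One also checks that the mixed curvature terms produced by the $s$- and $x$-derivatives of $\tilde g$ are small once $\beta$ has small support and a controlled profile. Together these give negative curvature off $\gamma$.

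\textbf{Step 3 (partial hyperbolicity).} I would use the cone criterion through the Riccati equation $U'+U^2+\tilde R=0$ for unstable and stable Jacobi tensors. The aim is uniform bounds of two kinds:
\begin{itemize}
\item on the strong directions, $\tilde K\le -a^2$ with $a$ close to $1$;
\item on all remaining planes, $-b^2\le\tilde K\le 0$ with $b<a$, for instance $b$ close to $1/2$.
\end{itemize}
A comparison argument then shows that the cones around the $J$-directions, in the horizontal/vertical splitting of $T(T^1M)$, are mapped strictly into themselves by $D\tilde\varphi^t$ for $t$ bounded below. The expansion inside these cones is at least $e^{(a-\epsilon)t}$, while vectors in the complementary cone grow at most like $e^{(b+\epsilon)t}$. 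This domination yields the splitting in item (1).

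\textbf{Main obstacle.} The hardest part is Step 3 near $\gamma$. There the deformation mixes the $J$-direction with $\mathcal{H}$, because $J$ is no longer parallel for $\tilde g$, and this threatens the gap between $a$ and $b$. I would first try to make the support of $\beta$ so thin that these off-diagonal curvature terms are $O(\text{support width})$, while keeping all second derivatives of $\tilde g$ bounded independently of the width. The domination inequalities are open conditions on the curvature tensor, so small off-diagonal terms should preserve them.
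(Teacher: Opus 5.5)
Your treatment of item (2) is fine. A perturbation vanishing to second order along $\gamma$ keeps $\gamma$ a geodesic with the same parallel transport, $\mathcal{H}(\gamma')$ is parallel because $J$ is, and the Hessian of $Q_s$ at $x=0$ is block diagonal. You do need $\gamma$ simple, as in the paper's sketch, so that the Fermi tube is embedded.

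The gap is that items (1) and (3) both rest on the claim that a thin support makes the mixed and off-diagonal curvature terms small, and that claim is false. Read your formula as $\tilde g=g+\beta(|x|)\,Q_s(x)\,ds^2$, which is the form that actually changes $R(\cdot,\gamma')\gamma'$. Then the curvature change is, up to $O(\varepsilon)$, $\Delta R_{0i0j}=-\frac12\partial_i\partial_j(\beta Q_s)$. On the annulus $|x|\sim\varepsilon$ the pieces $\beta''Q_s$, $\beta'\partial Q_s$ and $\beta\,\partial^2Q_s$ have sizes $\varepsilon^{-2}\varepsilon^2$, $\varepsilon^{-1}\varepsilon$ and $1$. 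They are scale invariant, and because $\beta$ depends on all of $|x|$, they include order-one entries coupling the $J\gamma'$-coordinate with the $\mathcal{H}$-coordinates. The off-diagonal curvature entries are themselves second derivatives of $\beta Q_s$, so ``bounded independently of the width'' is the best you can get, not $O(\text{width})$. Moreover, along an $\mathcal{H}$-ray the profile $F(t)=\beta(t)t^2$ satisfies $F'(0)=F(\varepsilon)=F'(\varepsilon)=0$. Hence $\int_0^\varepsilon F''=\int_0^\varepsilon tF''=0$ while $F''(0)=2$, so the correction $\frac18F''$ must change sign twice. The curvature of the $\gamma'\wedge h$ planes must therefore become more negative than for $g$ somewhere in the annulus and rise again near its edge. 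Negativity of every sectional curvature off $\gamma$ (including planes mixing $J\gamma'$ and $\mathcal{H}$) and your lower bound $-b^2$ are genuine quantitative constraints on the profile, and the proposal never checks them.

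Step 3 has the same problem in a sharper form. As literally stated, the criterion is impossible. Sectional curvature on planes containing $v$ is continuous, and already for $g$ it fills $[-1,-1/4]$, so there is no gap between $v\wedge Jv$ and ``all remaining planes''. Under the sensible reading (bounds on the diagonal blocks of $\tilde R_v$ relative to $Jv\oplus\mathcal{H}(v)$), Riccati comparison gives domination only if the off-diagonal block $\langle\tilde R(Jv,v)v,h\rangle$ and the rotation $\tilde\nabla J$ are negligible. Curvature bounds alone control the size of Jacobi fields, not which direction dominates. You also cannot appeal to openness of domination, because the perturbation is of order one.

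The off-diagonal block is not $C^0$-small even where $\beta\equiv1$. Take $v=\cos\theta\,\gamma'+\sin\theta\,h_0$ with $h_0\in\mathcal{H}$, and $h=-\sin\theta\,J\gamma'+\cos\theta\,Jh_0\in\mathcal{H}(v)$. The coupling then has size $\frac14\sin\theta\cos^3\theta+O(\varepsilon)$. Only its time integral along orbits can be made small, and each type of orbit needs its own treatment:
\begin{itemize}
\item Transverse crossings last $O(\varepsilon/\sin\theta)$ against a coupling of size $O(\sin\theta)$, so their integrated effect is small.
\item Orbits nearly tangent to $\gamma$ can stay near it for arbitrarily long times, so there the operator must be genuinely block diagonal.
\item Grazing orbits spend time of order one, not $O(\varepsilon)$, in the transition annulus. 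The cross terms there must be removed by the design (e.g.\ an anisotropic cutoff), not by thinness.
\end{itemize}
Turning this into a cone field that is invariant uniformly in $0<\varepsilon\le\varepsilon_0$ is the content of the step the paper's sketch summarizes as ``by taking $\varepsilon$ small, the geodesic flow of $\tilde g$ is still partially hyperbolic''. There the deformation is the conformal one of \cite{dJPR}, supported in $B(\gamma,\varepsilon)$. Your proposal replaces this step with an incorrect smallness claim, so item (1) is not established.
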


\begin{cor}[Corollary B.1 in \cite{dJPR}]
\label{new geodesic flow has all the good properties}
    There exists a Riemannian manifold $(M,\tilde g)$ with no conjugate points such that its geodesic flow is partially hyperbolic, non-Anosov, ergodic for Liouville, mixing, expansive, and has a unique measure of maximal entropy.
\end{cor}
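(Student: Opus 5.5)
The plan is to take the metric $\tilde g$ produced by Theorem \ref{ergodic partially hyperbolic geodesic flow} and check each listed property from the classical theory of geodesic flows in nonpositive curvature. The starting manifold is a compact complex hyperbolic manifold, with holomorphic curvature $-1$, or a compact quaternionic hyperbolic manifold. Items (1) and (3) of Theorem \ref{ergodic partially hyperbolic geodesic flow} give partial hyperbolicity directly. They also give $\tilde K<0$ on $M\setminus\gamma$, and by continuity $\tilde K\le 0$ on all of $M$.

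\textbf{Geometric properties.} Since $\tilde K\le 0$ everywhere, the Rauch comparison theorem (or the convexity of $\|J\|$ for Jacobi fields) shows that $(M,\tilde g)$ has no conjugate points. The geodesic flow is \emph{not} Anosov: by item (2), the closed geodesic $\gamma$ carries a parallel Jacobi field $J$. Then $\|J\|$ is constant, so $(J,J')$ spans a $D\varphi^t$-invariant direction along the periodic orbit $\gamma'$ that is neither contracted nor expanded. This contradicts uniform hyperbolicity. (Alternatively, invoke Klingenberg's theorem together with the remark that such a $\gamma$ has zero curvature in the plane spanned by $\gamma'$ and $J$.)

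\textbf{Expansivity.} I would use the characterization, due to Ruggiero in the no-conjugate-points setting and classical in nonpositive curvature, that the geodesic flow of a compact nonpositively curved manifold is expansive iff $\widetilde M$ has no flat strips. A flat strip is a totally geodesic isometric immersion of $[0,a]\times\R$ with $a>0$, so its sectional curvature vanishes on an open 2-dimensional set. Since $\tilde K<0$ off the single closed curve $\gamma$, no such strip can exist, and expansivity follows. In particular, every geodesic is rank one: a parallel Jacobi field perpendicular to a geodesic other than (a lift of) $\gamma$ would force zero curvature along a geodesic that enters the region $\tilde K<0$.

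\textbf{Ergodic properties.} The set of non-regular vectors is contained in the orbit of $\gamma'$ (and $-\gamma'$), which has Liouville measure zero. Pesin's theory for rank-one nonpositively curved manifolds (Pesin, Ballmann--Brin, Burns) then gives ergodicity of the Liouville measure on $T^1M$; in fact it gives the $K$ and Bernoulli properties, hence mixing. Mixing also follows from ergodicity together with the fact that the flow is contact and not a suspension. Finally, Knieper's theorem on compact rank-one manifolds of nonpositive curvature yields a unique measure of maximal entropy.

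\textbf{Main obstacle.} The delicate point is applying these results, stated for $C^\infty$ metrics, to the merely $C^2$-deformation $\tilde g$. I would first try to arrange, within the construction of Theorem \ref{ergodic partially hyperbolic geodesic flow}, that $\tilde g$ is smooth. Failing that, I would check that the arguments (Pesin theory needs $C^{1+\alpha}$ flows, i.e.\ $C^{2+\alpha}$ metrics) only use that regularity. The precise no-flat-strip criterion for expansivity is the other step requiring care.
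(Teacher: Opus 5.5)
The paper does not reprove this statement but imports it from \cite{dJPR}; your argument is correct and follows the standard route behind that result. Nonpositive curvature by continuity gives no conjugate points, and the perpendicular parallel Jacobi field along $\gamma$ rules out the Anosov property. Negativity of $\tilde K$ off $\gamma$ gives rank one with singular set the orbits of $\pm\gamma'$ and no flat strips. From this follow expansivity (flat strip theorem), ergodicity and mixing (Pesin, Ballmann--Brin, Burns), and uniqueness of the measure of maximal entropy (Knieper).

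Two side remarks are slightly off, though you do not need them. First, Klingenberg's theorem (Anosov $\Rightarrow$ no conjugate points) cannot rule out the Anosov property; the relevant criterion is Eberlein's characterization in nonpositive curvature by the absence of perpendicular parallel Jacobi fields, and your direct bounded-norm argument already suffices. Second, mixing should rest on Pesin's K/Bernoulli theorem, as in your first argument, rather than on a general ``ergodic plus contact'' principle.
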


From the previous corollary, in order to apply Theorem \ref{criterion of robust transitivity} to the Riemannian metric $\tilde g$ above, the only hypothesis that we are left to check is the SH-saddle property. Before showing this property, let us first recover the construction of the example by mentioning the main steps and ideas:
\begin{enumerate}
    \item Start with a locally symmetric Riemannian manifold of non-constant negative curvature $(M,g)$. The geodesic flow for such metrics displays an Anosov splitting of the form $$T_{\theta}(T^1M)=E^{ss}\oplus E^{ws} \oplus \R G(\theta) \oplus E^{wu}\oplus E^{uu}$$ where $\R G(\theta)$ is the flow direction. We can see this dominated splitting as a partially hyperbolic splitting with center bundle $E^c= E^{ws} \oplus E^{wu}$.
    \item Fix a simple closed geodesic $\gamma$ of period $T$ for the metric $g$. We can always find such a geodesic in a compact manifold \cite{Kl2}.
    
    \item We make a deformation of $g$ inside a small tubular neighbourhood $B(\gamma,\varepsilon)$ to obtain a new metric $\tilde g$. Since $\tilde g|_{B(\gamma,\varepsilon)^c}=g|_{B(\gamma,\varepsilon)^c}$ the dynamics of the new geodesic flow remains hyperbolic outside $B(\gamma, \varepsilon)$. 
     
    \item We change the metric inside $B(\gamma, \varepsilon)$ in a way that at least one vector in the center bundle is not contracted nor expanded by the new geodesic flow, which implies that it is not globally hyperbolic (Anosov). 

    \item By taking $\varepsilon$ sufficiently small, the geodesic flow of the new metric $\tilde g$ is still partially hyperbolic.

    \item We show that there exists $\varepsilon_0>0$ such that the construction works for all $0<\varepsilon\leq \varepsilon_0$. This will be important in our proof here since it gives us some freedom to consider the construction for smaller $\varepsilon>0$ if necessary.
\end{enumerate}

Now take the Riemannian metric $\tilde g$ as above and denote by $\varphi^t:T^1M \to T^1M$ the geodesic flow of $\tilde g$. Recall that in order to prove that $\tilde g$ has the SH-Saddle property, we have to find $T>0$ such that the map $f:=\varphi^T$ has the SH-Saddle property as a diffeomorphism. Now we fix $T>0$ and consider $f=\varphi^T$. Suppose that there is a point $\theta^u \in T^1M$ such that its forward orbit never enters the tubular neighborhood $B(\gamma,\varepsilon)$ in the future, i.e. $f^n(\theta^u)=\varphi^{Tn}(\theta)\notin B(\gamma,\varepsilon)$ for every $n\geq0$. Then, since the metric $\tilde g$ is equal to $g$ outside $B(\gamma,\varepsilon)$, and since the geodesic flow of $g$ is hyperbolic, the forward orbit of $\theta^u$ by $f$ behaves hyperbolically.  

Recall the SH-Saddle property for the unstable manifolds means that every unstable leaf of a given large has a point which behaves hyerbolic for the future, so in order to get the SH-Saddle property we only have to check that in every strong unstable leaf of a given large, say $\mathcal{W}^{uu}_f(\theta,L)$, there is a point $\theta^u \in \mathcal{W}^{uu}_f(\theta,L)$ that never enters in the tubular neighborhood for the future. This will be enough to prove that the strong unstable foliation has the SH-Saddle property. Analogously, the same will hold for the strong stable manifold but iterating for the past.

The idea to find points in the unstable manifold that never enter some given transverse region is classical and appears in \cite{Mane2} and also in \cite{HHU} in a more general setting (named as ``keepaway Lemma"). The same proof also appears in \cite{Pi2}.

\begin{lem} \label{ExampleisSH}
	The geodesic flow of $\tilde g$ has the SH-Saddle property of index $(d,d)$, where $d=n-1-\textnormal{dim}(E^{ss})=n-1-\textnormal{dim}(E^{uu})$.
\end{lem}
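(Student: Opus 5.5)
We follow the keepaway strategy sketched above: the forward orbit of a point that never enters $B(\gamma,\varepsilon)$ only sees the original locally symmetric metric $g$. So it suffices to produce, in every strong unstable plaque of a fixed length $L$, a point whose forward $f$-orbit avoids a slightly larger tube $B(\gamma,2\varepsilon)$, and then verify the cone conditions of Definition \ref{defshuu} at that point. The stable case follows by the symmetric argument for $f^{-1}$, or directly from the flip $\theta\mapsto-\theta$, which conjugates $\varphi^t$ to $\varphi^{-t}$ and swaps $\cW^{uu}$ with $\cW^{ss}$.

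\textbf{Step 1 (cones along good orbits).} For the metric $g$ the splitting $E^{ss}\oplus E^{ws}\oplus\R G\oplus E^{wu}\oplus E^{uu}$ is hyperbolic. The center bundle of $f=\varphi^T$ is $E^c_f=E^{ws}\oplus\R G\oplus E^{wu}$, and $\dim E^{wu}=d=n-1-\dim E^{uu}$. On the compact set $\Lambda=T^1M\setminus B(\gamma,\varepsilon)$ the metrics $\tilde g$ and $g$ coincide, so along any forward orbit contained in $\Lambda$ we have $Df=D\varphi^T_g$. Take the cone $\cC^u_\mu$ of size $\mu$ around $E^{wu}$ inside $E^c_f$, measured with respect to the $\tilde g$-center bundle. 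Since the center bundle of $\tilde g$ is $C^0$-close to that of $g$ when $\varepsilon$ is small, and $\varepsilon$ can be shrunk by item (6) of the construction, for $T$ large this cone is mapped into itself by $Df$. Vectors in it are expanded by a factor $\lambda_0^n$ with uniform $C>0$. This gives conditions (\ref{c1shu}) and (\ref{c2shu}) at every point whose forward orbit stays in $\Lambda$.

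\textbf{Step 2 (keepaway).} The tube $B(\gamma,2\varepsilon)$ has a cross-section that is small and transverse to $E^{uu}$. It is essentially the $f$-orbit of a small neighbourhood of a single periodic orbit, of size of order $\varepsilon$ in the transverse directions. Fix $L$ large and take a plaque $\cW^{uu}_f(\theta,L)$. Expansion along $\cW^{uu}$ lets us run a nested Cantor-type selection. Any strong unstable disk of radius $r_0$, with $r_0$ fixed but much larger than $\varepsilon$, contains a subdisk of radius $r_0/\lambda_{uu}^{m}$ whose first $m$ iterates avoid the tube, and whose $m$-th image again contains a disk of radius $r_0$. Iterating gives a decreasing sequence of compact subdisks. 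Their intersection is a point $\theta^u$ whose entire forward orbit avoids $B(\gamma,2\varepsilon)$, hence lies in $\Lambda$. Step 1 then gives $\theta^u\in H^+_{\lambda_0,d}(f)\cap\cW^{uu}_f(\theta,L)$.

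\textbf{Main obstacle.} The delicate step is the single-stage avoidance estimate in Step 2: a $uu$-disk of radius $r_0$ must not be swallowed by the tube during the next $m$ iterates. I would prove it by a transversality argument. Because $E^{uu}$ is uniformly transverse to the orbit of $\gamma$ and to its center-stable directions, the set of times at which an image of the disk meets $B(\gamma,2\varepsilon)$ consists of short windows. In each such window only a small proportion of the disk, of order $\varepsilon/r_0$, is affected, so a large good subdisk survives. Choosing $\varepsilon$ small relative to $r_0$ closes the argument. The constants $r_0$, $m$ and $L$ depend only on $g$ and the partially hyperbolic cone fields, so they are uniform in $\theta$, as the definition requires.
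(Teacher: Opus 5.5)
Your overall plan is the paper's: a keepaway selection of nested strong unstable disks avoiding the perturbed region, followed by hyperbolicity of $g$ along the resulting forward orbit. The paper's bookkeeping is lighter than your $m$-iterate stages. It fixes $\delta$ so that every $\cW^{uu}(\theta,L)$ contains some $\cW^{uu}(\theta_1,\delta)$ disjoint from the tube, takes $T$ with $\delta\lambda_{uu}^T>L$, and picks one $\delta$-disk per iterate, so no proportion estimate is needed.

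However, the step you single out as delicate is argued for the wrong set. The region where the two flows differ is the full-fibre tube $TB(\gamma,\varepsilon)=\{(p,v):p\in B(\gamma,\varepsilon)\}$. It contains every unit vector based near $\gamma$, so it is thin only in the $n-1$ horizontal directions transverse to $\gamma$; it is not a small neighbourhood of the periodic orbit $\dot\gamma$. A geodesic crossing $\gamma$ at a definite angle enters it while staying far from $\dot\gamma$ in $T^1M$. Hence transversality of $E^{uu}$ to $\dot\gamma$ and its center-stable directions does not control which points of a $uu$-plaque meet the tube, and a point avoiding only a neighbourhood of $\dot\gamma$ need not have its orbit in your $\Lambda$.

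What is needed (the paper's ``uniform transversality to $TB(\gamma,\epsilon)$'') is a statement about footpoints. $E^{uu}$ is a graph over the horizontal subspace, so $\pi$ immerses each $uu$-plaque in $M$, and the image meets $B(\gamma,\varepsilon)$ only in pieces of small diameter. Because $d\pi(E^{uu})$ can be tangent to $\dot\gamma$ (in the complex hyperbolic case $d\pi(E^{uu}(v))=\R Jv$, which is $\R\dot\gamma$ for $v=-J\dot\gamma$), this needs the curvature of the unstable horospheres, not only first-order transversality.

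Step 1 has two further problems. First, $Df=D\varphi^T_g$ at $f^j(\theta^u)$ requires the whole segment $\{\varphi^t(f^j(\theta^u)):0\le t\le T\}$ to lie outside the tube. Your selection controls only the points $f^j(\theta^u)$. With $T$ large, the margin $B(\gamma,2\varepsilon)$ covers only times within $\varepsilon$ of $jT$, and nothing prevents the orbit from crossing or shadowing $\gamma$ in between. The paper's induction has the same looseness, since it only records $f^n(\theta^u)\notin TB(\gamma,\epsilon)$. A complete argument must run the selection against $\bigcup_{0\le t\le T}\varphi^{-t}(TB(\gamma,\varepsilon))$ rather than against the tube.

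Second, $C^0$-closeness of the $\tilde g$-center bundle to $E^{ws}\oplus\R G\oplus E^{wu}$ does not follow from the construction, because the deformation is not $C^2$-small (it makes some sectional curvatures vanish along $\gamma$). You do not need it. Along a forward orbit on which the two derivative cocycles agree, $E^{ss}$ and $E^{cs}$ of $\tilde g$ coincide with those of $g$, since dominated bundles are determined by the forward cocycle. Projecting the center bundle $E^c_f$ along $E^{ss}$ onto $E^{ws}\oplus\R G\oplus E^{wu}$ then conjugates $Df|_{E^c_f}$ to $D\varphi^T_g$. The distortion is bounded by the uniform angle between $E^c_f$ and $E^{ss}$, and the preimage of a cone around $E^{wu}$ gives the required invariant, uniformly expanded $d$-center cones.
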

\begin{proof}
    Let us denote by $\varphi^t:T^1M \to T^1M$ the geodesic flow of the Riemannian metric $\tilde g$. The idea is to prove that for some $T>0$, the map $f=\varphi^T$ has the $(d,d)$ SH-saddle property (recall Definition \ref{defshf}). We are going to show that, for a $T>0$, the strong unstable manifold $\cW^{uu}_f$ has the SH-saddle property of index $d$. The stable part is completely analogous.
	
	Recall that the geodesic flow of the original metric $g$ has a dominated splitting of the form:
	\begin{equation} \label{splittinghyp}
	T_{\theta}(T^1M)=E^{ss}\oplus E^{ws} \oplus  \R G(\theta) \oplus E^{ws}\oplus E^{uu}
    \end{equation}
    where $\R G(\theta)$ is the flow direction, 
	and the new metric $\wt{g}$ is equal to $g$ outside the tubular neighborhood $B(\gamma,\e)$ by definition. 

Now the key observation is that the strong bundles $E^{ss}$ and $E^{uu}$ are uniformly transverse to $TB(\gamma, \epsilon)=\left\{ (p,v) \in TM: p \in B(\gamma, \epsilon)\right\}$. In particular, every unstable leaf of size $L$, $\cW^{uu}(\theta,L)$, with $L$ sufficiently large compared to $\e$, crosses $TB(\gamma,\epsilon)$ a finite number of times. Moreover there is $\delta>0$ and a point $\theta_1 \in \cW^{uu}(\theta,L)$ and $\delta>0$ such that: 
\begin{equation*}  \cW^{uu}(\theta_1,\delta) \subset \cW^{uu}(\theta,L)\setminus TB(\gamma,\epsilon).
\end{equation*}
	Now we can take $T>0$ large enough, such that 
	\begin{equation} \label{eqSH}\delta(\lambda_{uu})^T> L.
	\end{equation}	
	For this $T$, we consider the partially hyperbolic diffeomorphism $f=\varphi^T$. We claim that the unstable foliation of $f$ has the SH-saddle property of index $d$. We proceed by induction as follows:

	Take $\theta \in T^1M$ and consider $\cW^{uu}_f(\theta,L)$. As we already mentioned, there is a point $\theta_0$ such that $\cW^{uu}_f(\theta_0,\delta) \subset \cW^{uu}_f(\theta,L)\setminus TB(\gamma,\epsilon)$. We denote by $\cD_{0}=\overline{\cW^{uu}_f(\theta_0,\delta)}$. By our choice of $T$ and $\delta$ in Equation \eqref{eqSH}, we have  $f(\cD_{0})\supseteq\mathcal{W}^{uu}_{f}(f(\theta_0),L)$ and in the same way as above, we can find a disk $\cD_{1}=\overline{\mathcal{W}^{uu}_f(\theta_1,\delta)} \subset f(\cD_0)$ such that $\cD_1\cap TB(\gamma,\e)=\emptyset$. Inductively we get a sequence of unstable disks $\{\cD_j\}_{j\geq0}$ such that  $\cD_j\cap TB(\gamma,\e)=\emptyset$ for every $j\geq 0$ and $f^{-1}(\cD_{j+1}) \subset \cD_{j}$. Finally the point $\theta^u=\bigcap_{j\geq 0}f^{-j}(\cD_j)$ never meets $TB(\gamma,\e)$ in the future, i.e. $f^n(\theta^u) \notin TB(\gamma,\e)$ for every $n \geq 0$. 
    
    Recall that the geodesic flow of $\tilde g$ is equal to the geodesic flow of $g$ outside $TB(\gamma,\e)$, and in this region the tangent bundle has a splitting exactly as in Equation \eqref{splittinghyp}. Therefore, we get the point $\theta^u$ is hyperbolic for the future showing the unstable manifold $\cW^{uu}_f$ has SH-Saddle property of index $d$, where $d=\textnormal{dim}(E^{wu})$. Now just notice that $\textnormal{dim}(E^{wu})=n-1-\textnormal{dim}(E^{uu})$.
	
	In the same way, in every strong stable leaf of large $L$ we can find a point $\theta^s$  such that the past orbit of $\theta^s$ never meets $TB(\gamma,\e)$. Once again since the two geodesic flows coincide outside $TB(\gamma,\e)$, the same argument as above shows that $\cW^{ss}_f$ has SH-Saddle property of index $d=\textnormal{dim}(E^{ws})=n-1-\textnormal{dim}(E^{ss})$.
\end{proof}

\begin{rem}
The proof of Lemma \ref{ExampleisSH} also works for the Carneiro-Pujals examples in \cite{CP}, that is, their examples also present the SH-Saddle property. However, they are not known to be expansive, nor is there any geodesic with conjugate points. These are open questions about their work as well as ergodicity.
\end{rem}


\subsection{Conclusion of the proof} \label{ssConcl}
We now finish the proof of Theorem \ref{THM A} by following the steps we stated in the introduction. 

Let $\tilde g$ be the Riemannian metric given by Corollary \ref{new geodesic flow has all the good properties}, i.e., $\tilde g$ has no conjugate points and its geodesic flow $\varphi^t$ is expansive, partially hyperbolic but non-Anosov. By Lemma \ref{ExampleisSH} the geodesic flow $\varphi^t$ has the SH-saddle property of index $(d,d)$, where $d=n-1-k$ and $k=\text{dim}E^{ss}=\text{dim}E^{uu}$. Then by applying Theorem \ref{criterion of robust transitivity}, we obtain a $C^2$ neighborhood of $\tilde g$, say $\mathcal{U}$, such that every Riemannian metric $ g\in \mathcal{U}$ has a topologically transitive geodesic flow. 

Ruggiero proved in \cite{Rug0} that the $C^2$ interior of the set of Riemannian metrics with no conjugate points coincides with the set of Riemannian metrics with Anosov geodesic flow. Since the geodesic flow of $\tilde g$ is non-Anosov, the metric $\tilde g$ must lie in the $C^2$ boundary of the set of Riemannian metrics with no conjugate points. Therefore, there is a metric $\hat g \in \mathcal{U}$ that has conjugate points. Moreover, by Corollary 1.2 of \cite{Rug0}, we know that the set of Riemannian metrics with conjugate points is $C^2$-open. Then there exists $\mathcal{V}\subset\mathcal{U}$ a $C^2$ neighborhood of $\hat g$ consisting of metrics with conjugate points. 

Therefore, we conclude that every Riemannian metric in $\mathcal{V}$ has conjugate points and a partially hyperbolic geodesic flow that is topologically transitive. 

\section{Further comments and results}
Among other dynamical properties we could find in the boundary of metrics with no conjugate points, it would be very interesting if stable ergodicity could be produced. One could find a criterion for stable ergodicity that applies to the same class of examples, and this would imply the existence of ergodic geodesic flows for metrics with conjugate points in a similar way to the results presented here. The existence of such systems is not known for dimensions higher than $2$. 

Theorem \ref{THM A} gives us an open set of Riemannian metrics with conjugate points and partially hyperbolic topologically transitive geodesic flows. Hence, applying genericity results, we can find examples with further properties. In here, we mention the existence of \textit{bumpy metrics}.

Let $(M,g)$ be a Riemannian metric, with geodesic flow $\varphi^t$, and $\gamma$ be a closed prime geodesic of period $T$ (i.e. $\gamma(0)=\gamma(T)$ and $\gamma'(0)=\gamma'(T))$. The geodesic $\gamma$ is said to be \textit{nondegenerate} if $(D\varphi^T)_{\gamma'(0)}: T_{\gamma'(0)}(T^1M)\rightarrow T_{\gamma'(T)}(T^1M)$ has only one eigenvalue equal to $1$.

\begin{df}
    A Riemannian metric $g$ is called \textit{bumpy} if all its closed geodesics are nondegenerate.
\end{df}

The so-called \textit{bumpy metric Theorem}, whose proof can be found in \cite{An82}, states that a metric can always be perturbed into a bumpy metric:

\begin{thm}[bumpy metric Theorem]
    The bumpy metrics form an $C^r$-everywhere dense subset of the space of Riemannian metrics of class $C^r$, for $2\leq r\leq \infty$.
\end{thm}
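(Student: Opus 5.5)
The plan is the classical transversality argument of Abraham and Anosov, adapted to geodesic flows. The idea is to prove something stronger than density: bumpy metrics form a residual subset of the $C^r$ metrics. Density then follows from the Baire property.

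For $a>0$, let $\mathcal{G}_a$ be the set of $C^r$ metrics all of whose closed geodesics of period at most $a$ are nondegenerate, where nondegenerate means the linearized Poincaré map $P_\gamma$ on a transverse section has no eigenvalue $1$. The bumpy metrics are then $\bigcap_{a\in\N}\mathcal{G}_a$. The proof reduces to showing that each $\mathcal{G}_a$ is $C^r$-open and dense; I would in fact work with the slightly finer sets $\mathcal{G}_{a,b}$, controlling periods in $[b,a]$, so that short iterates are handled first.

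\textbf{Step 1 (openness).} Nondegenerate closed orbits of period at most $a$ are isolated. By compactness of $T^1M$ and a lower bound on periods (injectivity radius), there are only finitely many of them. Each one persists under $C^2$-small perturbation by the implicit function theorem applied to the Poincaré map, and remains nondegenerate. A $C^1$-small perturbation of the geodesic vector field cannot create new closed orbits of period at most $a$ away from these: otherwise a limit argument would produce a closed orbit of the original flow, which would have to be one of the finitely many nondegenerate ones, and near those uniqueness holds.

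\textbf{Step 2 (density, local perturbation).} This is the core step and the one I expect to be the main obstacle. Start from $g\in\mathcal{G}_{a-\epsilon}$; by induction on $a$ in small steps, we only need to fix the closed geodesics with period in $[a-\epsilon,a]$. Pick such a prime $\gamma$ and a short segment $\gamma|_{[t_0,t_0+\tau]}$ that is traversed only once, which is possible because $\gamma$ is prime. Perturb $g$ by a symmetric tensor supported in a small tube around this segment. Choose it vanishing to first order along $\gamma$ (in Fermi coordinates, $h_{ij}=\tfrac12 H_{ij}(t)x^ix^j$), so that $\gamma$ remains a geodesic with the same period. The effect on the Jacobi equation $J''+R(t)J=0$ is to replace $R$ by $R+H$, with $H$ an arbitrary small symmetric matrix function supported on $[t_0,t_0+\tau]$. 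The Poincaré map is the symplectic monodromy of this equation. The needed lemma, in the style of Klingenberg--Takens, is that the map $H\mapsto P_\gamma\in Sp(2n-2)$ is a submersion. I would prove it by computing the derivative via variation of parameters: it equals $\int \Phi(t)^{-1}\,\mathrm{diag}(0,H(t))\,\Phi(t)\,dt$ composed with $P_\gamma$, and these span the symplectic Lie algebra because the fundamental solution rotates Lagrangian planes over any interval. Given the submersion, a small perturbation moves $P_\gamma$ off the codimension-one set $\{\det(P-I)=0\}$. Replacing the eigenvalue condition by requiring no eigenvalues that are roots of unity of order at most $a/\mathrm{period}$ takes care of iterates at the same stroke.

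\textbf{Step 3 (finiteness and conclusion).} Degenerate closed geodesics need not be isolated, so I would not perturb them one by one. Instead I would apply a parametric transversality argument, Abraham's theorem, to the map $(g,\theta,t)\mapsto(\varphi_g^t(\theta),\theta)$ restricted to a compact family of periods $[a-\epsilon,a]$. The submersion from Step 2 shows that the evaluation map is transverse to the diagonal together with its degeneracy stratum. Hence for a residual, and by Step 1 open and dense, set of $g$ all these orbits are nondegenerate. Localized perturbations can be taken $C^\infty$ and arbitrarily $C^r$-small, which gives density of $\mathcal{G}_a$ in the $C^r$ topology for every $2\le r\le\infty$. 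Intersecting over $a\in\N$ then yields a residual, hence dense, set of bumpy metrics.
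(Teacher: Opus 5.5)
The paper gives no proof of this statement. It quotes it from Anosov \cite{An82}, so your outline has to stand on its own. Step 1 is fine, but the lemma in Step 2 is false as stated once $n\ge 3$, and Step 3 asserts the hard part of the theorem rather than proving it.

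\textbf{Step 2.} The perturbation enters $(J,J')'=A(t)(J,J')$ as the off-diagonal block $\begin{pmatrix}0&0\\-\delta H&0\end{pmatrix}$, not as $\mathrm{diag}(0,H)$. Suppose the curvature operator is scalar on the support, $R(t)=r(t)I$. This happens for the round $S^n$, where every geodesic is closed and degenerate. Then the propagator $\Psi$ there has blocks $\alpha I,\beta I,\alpha' I,\beta' I$ with $\alpha\beta'-\alpha'\beta=1$, and
\[
\Psi^{-1}\begin{pmatrix}0&0\\-\delta H&0\end{pmatrix}\Psi=\begin{pmatrix}\alpha\beta\,\delta H&\beta^2\,\delta H\\-\alpha^2\,\delta H&-\alpha\beta\,\delta H\end{pmatrix}.
\]
So the image of the derivative lies in $P_\gamma\,\Phi(t_0)^{-1}\mathcal{S}\,\Phi(t_0)$, where $\mathcal{S}\subset\mathfrak{sp}(2m)$ is the set of matrices whose three blocks are symmetric. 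Since $\dim\mathcal{S}=\tfrac32 m(m+1)<m(2m+1)$ for $m=n-1\ge 2$, the map is not a submersion. The missing directions (the antisymmetric part of the diagonal block) first appear at third order in the Taylor expansion, through $[\delta H,R(t_0)]$. You therefore need a preliminary perturbation giving $R(t_0)$ simple spectrum; ``rotating Lagrangian planes'' does not supply this.

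Also, primeness in $T^1M$ does not by itself give a segment traversed once in $M$. You must choose the segment to avoid the finitely many transverse self-intersections of $\gamma$. Both issues in Step 2 can be repaired.

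\textbf{Step 3.} This is the genuine gap.
\begin{itemize}
\item Your Step 2 perturbations vanish to first order along $\gamma$, so they do not move $\varphi^t_g(\theta)$ at all. They contribute nothing to transversality with the diagonal; that requires endpoint-displacing perturbations.
\item More seriously, for $t\in[a-\epsilon,a]$ the zero set contains iterates $\gamma^k$ of shorter prime geodesics. The hypothesis $g\in\mathcal{G}_{a-\epsilon}$ says nothing about multipliers $\omega\ne 1$ of $\gamma$ with $\omega^k=1$ when $kT\in(a-\epsilon,a]$.
\item At such an iterate, transversality fails for every family of perturbations. If $x_0$ is the fixed point of the prime return map $P$, then $\delta(P^k)(x_0)=\sum_{j=0}^{k-1}DP^{\,j}\,\delta P(x_0)$. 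Both $\sum_{j=0}^{k-1}DP^{\,j}$ and $DP^k-I$ are polynomials in $DP$ vanishing at $\omega$ and $\bar\omega$. Hence their images together miss part of the generalized eigenspace of $\{\omega,\bar\omega\}$, and Abraham's theorem cannot be applied on the whole window.
\end{itemize}

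A correct argument proceeds in this order:
\begin{enumerate}
\item At the finitely many (nondegenerate, hence isolated) prime geodesics of period $\le a-\epsilon$, remove those roots of unity using the repaired Step 2 lemma. This is where that lemma is actually needed.
\item Observe that the resulting iterates stay nondegenerate for all nearby metrics.
\item Only then apply Sard--Smale on the open set of prime orbits in the window, using endpoint-displacing perturbations.
\end{enumerate}
This separation of iterates from prime orbits is the delicate point that any complete proof, such as \cite{An82}, must handle. Your proposal reduces it to a single sentence in Step 2 and never builds it into the induction or into Step 3.
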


Hence, Theorem \ref{THM A} together with the bumpy metric Theorem leads to:
\begin{prop}
    There exists a bumpy metric $g$, with geodesic flow $\varphi^t$ such that:
    \begin{enumerate}
        \item $g$ has conjugate points.
        \item $\varphi^t$ is a partially hyperbolic flow.
        \item $\varphi^t$ is topologically transitive.
    \end{enumerate}
\end{prop}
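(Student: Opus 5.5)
The plan is to combine Theorem \ref{THM A} with the bumpy metric Theorem through an open-dense argument. By Theorem \ref{THM A} (more precisely, by its proof in Subsection \ref{ssConcl}) there is a nonempty $C^2$-open set $\mathcal{V}$ of Riemannian metrics on $M$ such that every $g\in\mathcal{V}$ has conjugate points and a topologically transitive geodesic flow. We also want partial hyperbolicity. Since $\mathcal{V}\subset\mathcal{U}\subset\cU_0$ from the proof of Theorem \ref{teoTransitividadeRobusta}, every metric in $\mathcal{V}$ has a partially hyperbolic geodesic flow. If one prefers not to rely on the specific choice of neighborhoods, one can intersect $\mathcal{V}$ with the $C^2$-open set of partially hyperbolic metrics containing $\tilde g$. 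This gives a nonempty open set, since partial hyperbolicity is $C^2$-open and $\hat g$ can be chosen arbitrarily close to $\tilde g$.

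The second step is to invoke the bumpy metric Theorem with $r=2$. Bumpy metrics are $C^2$-dense in the space of $C^2$ Riemannian metrics, so the nonempty $C^2$-open set $\mathcal{V}$ contains a bumpy metric $g$. That $g$ inherits the three properties: it has conjugate points, its flow is partially hyperbolic, and its flow is transitive.

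The only point requiring care is regularity bookkeeping, and it is a routine check rather than a real obstacle. Theorem \ref{THM A} is stated for $C^2$-open sets of metrics, while bumpy density is stated in the $C^r$ topology on $C^r$ metrics. If one wants a smooth bumpy metric, note that $\mathcal{V}\cap C^\infty$ is nonempty and open in the $C^\infty$ topology, because the $C^\infty$ topology is finer than the $C^2$ topology. One then applies the theorem with $r=\infty$ to obtain a $C^\infty$ bumpy metric inside $\mathcal{V}$.
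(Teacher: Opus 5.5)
Your proposal is correct and follows essentially the same route as the paper: take the nonempty $C^2$-open set of metrics from the proof of Theorem~\ref{THM A} (conjugate points, partially hyperbolic and transitive geodesic flow), then use the density of bumpy metrics to find a bumpy metric inside it. Your additional remarks are sound details that the paper leaves implicit; they secure partial hyperbolicity via $\mathcal{U}_0$ (or by intersecting with a partially hyperbolic neighborhood) and handle the $C^2$ versus $C^\infty$ regularity bookkeeping.
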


Regarding the thermodynamical formalism, the examples obtained by Theorem \ref{ergodic partially hyperbolic geodesic flow} have a unique measure of maximal entropy. The geodesic flows that are sufficiently close to these examples are topologically semi-conjugate. If one can produce a perturbation of the examples to get metrics with conjugate points and such that the semi-conjugacy preserves time (i.e., $h\circ\varphi^t=\psi^t \circ h$), then the perturbed one should also have at most one measure of maximal entropy. Therefore, this could also be a property that one could push outside the class of metrics with no conjugate points.

\section*{Acknowledgments} 
The authors are tremendously grateful to Enrique Pujals for several helpful conversations and suggestions. The second author also wishes to thank Rafael Potrie and Mart\'in Sambarino for many discussions about the transitivity of the example in this article.

\textbf{Declaration}. The first named author was funded in part by the Luxembourg National Research Fund
(FNR), Grant reference O24/18936913/RiGA. For the purpose of open access, and in fulfillment
of the obligations arising from the grant agreement, the author has applied a Creative Commons Attribution 4.0 International (CC BY 4.0) license to any Author Accepted Manuscript
version arising from this submission.

\Addresses


\begin{thebibliography}{99}
	
	
\def\bi#1{\bibitem{#1}}

\bi{An} D. V. Anosov, Geodesic flows on closed Riemann manifolds with negative curvature, \textit{Proc. Steklov Inst. Math.}, \textbf{90} (1967), 1-235 {(English translation. Providence, R.I.:Amer. Math. Soc. 1969)}.

\bi{An82} D.V. Anosov, Generic properties of closed geodesics, \textit{Izv. Akad. Nauk SSSR Ser. Mat.} {\bf 46} (1982), no.~4, 675--709, 896.


\bi{BD} C. Bonatti and L.~J. D\'iaz, Persistent nonhyperbolic transitive diffeomorphisms, Ann. of Math. (2) {\bf 143} (1996), no.~2, 357-396.

\bi{BGV} C. Bonatti, N. Gourmelon, and T. Vivier, Perturbations of the derivative along periodic orbits, \textit{Ergodic Theory and Dynamical Systems}, vol. 26, no. 5. (2006). pp. 1307–1337. 

\bi{BK} M. Bonk and B. Kleiner, Rigidity for quasi-M$\ddot{\textnormal{o}}$bius group actions, \textit{J. Diff. Geom.}  \textbf{61} (1), (2002), 81-106.

\bi{BW} K. Burns and H.~N. Weiss, Spheres with positive curvature and nearly dense orbits for the geodesic flow, \textit{Ergodic Theory Dynam. Systems} {\bf 22} (2002), no.~2, 329--348.

\bi{CP} E. Carneiro, E. Pujals. Partially hyperbolic geodesic flows. Ann. Ins. H. Poincar\'e, \textbf{31}(5), (2014), 985-1014. 

\bi{Cont} G. Contreras, Partially hyperbolic geodesic flows are Anosov, \textit{C. R. Math. Acad. Sci. Paris} {\bf 334} (2002), no.~7, 585--590.


\bi{ConP} G. Contreras and G. Paternain. Genericity of geodesic flows with positive topological entropy on $S^2$, \textit{J. Differential Geom.} {\bf 61} (2002), no.~1, 1-49.

\bi{dJPR} Y. de Jesus, L. Pi\~{n}eyr\'ua and S. Roma{\~n}a. Partially hyperbolic geodesic flow via conformal deformation. Preprint available on arXiv: \url{https://arxiv.org/abs/2410.21519} (2024).

\bi{DoC} M. P. do Carmo, Geometria Riemanniana, \textit{Projecto Euclides IMPA} (2015).

\bi{Don1} V.~J. Donnay, Geodesic flow on the two-sphere. I. Positive measure entropy, \textit{Ergodic Theory Dynam. Systems} {\bf 8} (1988), no.~4, 531--553.

\bi{Don2} V.~J. Donnay, Geodesic flow on the two-sphere. II. Ergodicity, {\it Dynamical systems (College Park, MD, 1986--87)}, 112--153, Lecture Notes in Math., 1342, Springer, Berlin.

\bibitem{DonnayPugh2003}
  V. Donnay and C. Pugh.
  Anosov geodesic flows for embedded surfaces,
  \textit{Geometric methods in dynamics (II): Volume in honor of Jacob Palis},
  Ast\'erisque, no. 287, pp. 61--69, Soci\'et\'e Math\'ematique de France, Paris, 2003.

\bi{Eb1} P. Eberlein. When is a geodesic flow of Anosov type? I, \textit{Journal of Differential Geometry}, \textbf{8}, (1973), 437-463. 

\bi{Eb2} P. Eberlein. Structure of manifolds with nonpositive curvature, \textit{Lecture  Notes in Mathematics}, \textbf{1156}, (1985), 86-153. 

\bi{Eb3} P. Eberlein. Geometry of Nonpositively Curved manifolds, \textit{Chicago Lectures in Mathematics} (1996). 

\bi{Eb4} P. Eberlein. Geodesic flows in certain manifolds without conjugate points, \textit{Transactions of the American Mathematical Society}, \textbf{167}, (1972), 437-463. 

\bi{GU} R. D. Gulliver II. On the variety of manifolds without conjugate points, \textit{Trans. Amer. Math. Soc.} {\bf 210} (1975), 185--201.

\bi{HPS} M. Hirsch, C. Pugh and M. Shub. Invariant manifolds, \textit{Springer Lecture Notes in Math.}, \textbf{583} (1977).

\bi{hopf1948} E.Hopf, Closed surfaces without conjugate points,
\textit{Proc. Natl. Acad. Sci. USA} \textbf{34} (1948), 47--51.

\bi{HuWa} W. Hurewicz and H. Wallman. Dimension theory, \textit{Princeton University Press}, (1941), \textbf{4}, Princeton Mathematical Series, 2.

\bi{KW} G. Knieper and H.~N. Weiss, $C^\infty$ genericity of positive topological entropy for geodesic flows on $S^2$, \textit{J. Differential Geom.} {\bf 62}, no.~1, (2002), 127--141.

\bi{Kl2} W. P. A. Klingenberg. Lectures on closed geodesics, Springer-Verlag, 1978.

\bi{Kl}W. P. A. Klingenberg. Riemannian manifolds with geodesic flow of Anosov type, \textit{Ann. of Math.} (2) {\bf 99} (1974), 1-13.

\bi{Mane2} R. Ma\~n\'e, Contributions to the stability conjecture, Topology {\bf 17} (1978), no.~4, 383-396.

\bi{Mane} R. Ma\~n\'e. On a theorem of Klingenberg. \textit{Dynamical Systems and Bifurcation Theory}, M.
Camacho, M. Pacifico and F. Takens eds, Pitman Research Notes in Math, (1987) pages 319–345.

\bi{LRR} A. Lazrag, L. Rifford and R.~O. Ruggiero, Franks' lemma for $C^2$-Ma\~n\'e{} perturbations of Riemannian metrics and applications to persistence, \textit{J. Mod. Dyn.} {\bf 10} (2016), 379--411.

\bi{MR}\'I. Melo and S. Roma\~na, Riemannian manifolds with Anosov geodesic flow do not have conjugate points, \textit{arXiv:2008.12898}, (2022).


\bi{LZ} M. Leguil and Z. Zhang. $C^{r}$-prevalence of stable ergodicity for a class of partially hyperbolic systems, \textit{Journal of Eur. Math. Soc. (JEMS)}, \textbf{24} (9), (2022), 3379-3438. 

\bi{Lew83} J. Lewowicz. Persistence in expansive systems, \textit{Ergod. Theor. $\&$ Dynam. Sys.}, \textbf{3} (4), (1983), 567-578.	

\bi{OS} J. J. O'Sullivan. Manifolds without conjugate points. \textit{Math. Ann.}, \textbf{210}, (1974), 295–311.

\bi{Patphd} M. Paternain. Expansive flows and the fundamental group (PhD Thesis) 1990. 

\bi{Pat93}
M.~Paternain, Expansive geodesic flows on surfaces,
\textit{Ergodic Theory Dynam. Systems} \textbf{13} (1993), no.~1, 153--165.

\bi{Pi1}  L. P. Pi\~{n}eyr\'ua. Contributions to partially hyperbolic systems: coherence, transitivity and ergodicity. PhD Thesis (2022).

\bi{Pi2} L. P. Pi\~{n}eyr\'ua. Some hyperbolicity revisited and robust transitivity, \textit{Ergod. Theor. $\&$ Dynam. Sys.}, \textbf{45} (12), (2025), 3800-3831.

\bi{PS} E. Pujals and M. Sambarino. A sufficient condition for robustly minimal foliations.\textit{ Ergod. Th. $\&$ Dynam.
Sys.} \textbf{26}(1) (2006), 281–289.

\bi{HHU} F. Rodriguez-Hertz, M.A. Rodriguez-Hertz and R. Ures. Accessibility and stable ergodicity for partially hyperbolic diffeomorphism with 1D-center bundle. \textit{Invent. Math}, \textbf{172} (2008), 353-381.



\bi{Rug0} R. O. Ruggiero. On the creation of conjugate points, \textit{Mathematische Zeitschrift}, (1991), \textbf{208}, 41-55.

\bi{Rug2} R. O. Ruggiero. On a conjecture about expansive geodesic flows, \textit{Ergod. Theor. $\&$ Dynam. Sys.}, (1996), \textbf{16} (3), 545-553.

\bi{Rug1} R. O. Ruggiero. Expansive geodesic flows in manifolds with no conjugate points, \textit{Ergod. Theor. $\&$ Dynam. Sys.}, (1997), \textbf{17} (1), 211-225.	

\bi{Wi} A. Wilkinson. Stable ergodicity of the time-one map of a geodesic flow. Ergodic Theory and Dynamical Systems \textbf{18}, 1545-1587 (1998).

\end{thebibliography}
\end{document}